\definecolor{internalLink}{rgb}{0.5,0,0}
\definecolor{citeLink}{rgb}{0,0.5,0}
\definecolor{urlLink}{rgb}{0,0,0.5}
\newtheorem{theorem}{Theorem}[section]
\newtheorem{lemma}[theorem]{Lemma}
\newtheorem{remark}[theorem]{Remark}
\newtheorem{definition}[theorem]{Definition}
\newtheorem{conjecture}[theorem]{Conjecture}
\newtheorem{proposition}[theorem]{Proposition}
\newtheorem{corollary}[theorem]{Corollary}
\newtheorem{algorithm}[theorem]{Algorithm}
\long\def\symbolfootnote[#1]#2{\begingroup%
\def\thefootnote{\fnsymbol{footnote}}\footnote[#1]{#2}\endgroup} 
\DeclareMathOperator{\id}{id}
\DeclareMathOperator{\rk}{rk}
\DeclareMathOperator{\Z}{\mathbb{Z}}
\DeclareMathOperator{\F}{\mathcal{F}^0}
\DeclareMathOperator{\R}{\mathcal{R}}
\DeclareMathOperator{\orb}{\mathcal{O}}
\newcommand{\CF}{\left(C/\!\mathcal{F}^0\right)}
\DeclareMathOperator{\SetS}{\mathcal{S}}
\DeclareMathOperator{\lt}{\vdash}
\DeclareMathOperator{\rt}{\dashv}
\newcommand{\triangleup}{%
	\mathrel{\reflectbox{\rotatebox[origin=c]{180}{$\triangledown$}}}}
\newcommand{\projleft}{right trivial}
\newcommand{\projright}{left trivial}
\newcounter{countcomments}
\newcommand{\hide}[1]{}		
 \def\dated#1{\def\thedate{#1}}%
\newdimen\high%
\newdimen\ul%
\newdimen\wdth%
\def\ratchet#1#2{\ifnum#1<#2\global #1=#2\fi}%
\def\ifnextchar#1#2#3{\let\@tempe%
#1\def\@tempa{#2}\def\@tempb{#3}\futurelet%
    \@tempc\@ifnch}%
\def\@ifnch{\ifx \@tempc \@sptoken \let\@tempd\@xifnch%
      \else \ifx \@tempc \@tempe\let\@tempd\@tempa\else\let\@tempd\@tempb\fi%
      \fi \@tempd}%
\def\:{\let\@sptoken= } \:  
\def\:{\@xifnch} \expandafter\def\: {\futurelet\@tempc\@ifnch}%
\let\ifnextchar\@ifnextchar%
\newdimen\axis \axis=\fontdimen22\textfont2%
\def\scalefactor#1{\ul=#1\ul \X@xbase=#1\X@xbase \Y@ybase=#1\Y@ybase}%
\def\fontscale#1{%
\if#1h\relax%
\font\xydashfont=xydash10 scaled \magstephalf%
\font\xyatipfont=xyatip10 scaled \magstephalf%
\font\xybtipfont=xybtip10 scaled \magstephalf%
\font\xybsqlfont=xybsql10 scaled \magstephalf%
\font\xycircfont=xycirc10 scaled \magstephalf%
\else%
\font\xydashfont=xydash10 scaled \magstep#1%
\font\xyatipfont=xyatip10 scaled \magstep#1%
\font\xybtipfont=xybtip10 scaled \magstep#1%
\font\xybsqlfont=xybsql10 scaled \magstep#1%
\font\xycircfont=xycirc10 scaled \magstep#1%
\fi}%
\def\bfig{\vcenter\bgroup\xy}%
\def\efig{\endxy\egroup}%
\def\car#1#2\nil{#1}%
\def\morphism{\ifnextchar({\morphismp}{\morphismp(0,0)}}%
\def\morphismp(#1){\ifnextchar|{\morphismpp(#1)}{\morphismpp(#1)|a|}}%
\def\morphismpp(#1)|#2|{\ifnextchar/{\morphismppp(#1)|#2|}%
    {\morphismppp(#1)|#2|/>/}}%
\def\morphismppp(#1)|#2|/#3/{%
    \ifnextchar<{\morphismpppp(#1)|#2|/#3/}%
    {\morphismpppp(#1)|#2|/#3/<\default,0>}}%
\def\morphismpppp(#1,#2)|#3|/#4/<#5,#6>[#7`#8;#9]{%
\xend#1\advance \xend by #5%
\yend#2\advance \yend by #6%
\domorphism(#1,#2)|#3|/#4/<#5,#6>[{#7}`{#8};{#9}]}%
\def\domorphism(#1,#2)|#3|/#4/<#5,#6>[#7`#8;#9]{%
\def\next{\car#4.\nil}%
\if@\next\relax%
 \if#3l%
  \ifnum #6>0%
   \POS(#1,#2)*+!!<0ex,\axis>{#7}\ar#4^-{#9} (\xend,\yend)*+!!<0ex,\axis>{#8}%
  \else%
   \POS(#1,#2)*+!!<0ex,\axis>{#7}\ar#4_-{#9} (\xend,\yend)*+!!<0ex,\axis>{#8}%
  \fi%
 \else \if#3m%
    \setbox0\hbox{$#9$}%
   \ifdim \wd0=0pt%
     \POS(#1,#2)*+!!<0ex,\axis>{#7}\ar#4 (\xend,\yend)*+!!<0ex,\axis>{#8}%
   \else%
     \POS(#1,#2)*+!!<0ex,\axis>{#7}\ar#4|-*+<1pt,4pt>{\labelstyle#9}%
       (\xend,\yend)*+!!<0ex,\axis>{#8}%
   \fi%
 \else \if#3r%
  \ifnum #6<0%
   \POS(#1,#2)*+!!<0ex,\axis>{#7}\ar#4^-{#9} (\xend,\yend)*+!!<0ex,\axis>{#8}%
  \else%
   \POS(#1,#2)*+!!<0ex,\axis>{#7}\ar#4_-{#9} (\xend,\yend)*+!!<0ex,\axis>{#8}%
  \fi%
 \else \if#3a%
  \ifnum #5>0%
   \POS(#1,#2)*+!!<0ex,\axis>{#7}\ar#4^-{#9} (\xend,\yend)*+!!<0ex,\axis>{#8}%
  \else%
   \POS(#1,#2)*+!!<0ex,\axis>{#7}\ar#4_-{#9} (\xend,\yend)*+!!<0ex,\axis>{#8}%
  \fi%
 \else \if#3b%
  \ifnum #5<0%
   \POS(#1,#2)*+!!<0ex,\axis>{#7}\ar#4^-{#9} (\xend,\yend)*+!!<0ex,\axis>{#8}%
  \else%
   \POS(#1,#2)*+!!<0ex,\axis>{#7}\ar#4_-{#9} (\xend,\yend)*+!!<0ex,\axis>{#8}%
  \fi%
 \else%
   \POS(#1,#2)*+!!<0ex,\axis>{#7}\ar#4 (\xend,\yend)*+!!<0ex,\axis>{#8}%
 \fi\fi\fi\fi\fi%
\else%
 \if#3l%
  \ifnum #6>0%
   \POS(#1,#2)*+!!<0ex,\axis>{#7}\ar@{#4}^-{#9} (\xend,\yend)*+!!<0ex,\axis>{#8}%
  \else%
   \POS(#1,#2)*+!!<0ex,\axis>{#7}\ar@{#4}_-{#9} (\xend,\yend)*+!!<0ex,\axis>{#8}%
  \fi%
 \else \if#3m%
    \setbox0\hbox{$#9$}%
   \ifdim \wd0=0pt%
     \POS(#1,#2)*+!!<0ex,\axis>{#7}\ar@{#4} (\xend,\yend)*+!!<0ex,\axis>{#8}%
   \else%
     \POS(#1,#2)*+!!<0ex,\axis>{#7}\ar@{#4}|-*+<1pt,4pt>{\labelstyle#9}%
         (\xend,\yend)*+!!<0ex,\axis>{#8}%
   \fi%
 \else \if#3r%
  \ifnum #6<0%
   \POS(#1,#2)*+!!<0ex,\axis>{#7}\ar@{#4}^-{#9} (\xend,\yend)*+!!<0ex,\axis>{#8}%
  \else%
   \POS(#1,#2)*+!!<0ex,\axis>{#7}\ar@{#4}_-{#9} (\xend,\yend)*+!!<0ex,\axis>{#8}%
  \fi%
 \else \if#3a%
  \ifnum #5>0%
   \POS(#1,#2)*+!!<0ex,\axis>{#7}\ar@{#4}^-{#9} (\xend,\yend)*+!!<0ex,\axis>{#8}%
  \else%
   \POS(#1,#2)*+!!<0ex,\axis>{#7}\ar@{#4}_-{#9} (\xend,\yend)*+!!<0ex,\axis>{#8}%
  \fi%
 \else \if#3b%
  \ifnum #5<0%
   \POS(#1,#2)*+!!<0ex,\axis>{#7}\ar@{#4}^-{#9} (\xend,\yend)*+!!<0ex,\axis>{#8}%
  \else%
   \POS(#1,#2)*+!!<0ex,\axis>{#7}\ar@{#4}_-{#9} (\xend,\yend)*+!!<0ex,\axis>{#8}%
  \fi%
 \else%
   \POS(#1,#2)*+!!<0ex,\axis>{#7}\ar@{#4} (\xend,\yend)*+!!<0ex,\axis>{#8}%
 \fi\fi\fi\fi\fi%
\fi\ignorespaces}%
\def\vect(#1,#2)/#3/<#4,#5>{%
 \xend#1 \yend#2 \advance\xend by #4 \advance\yend by #5%
     \POS(#1,#2)\ar#3 (\xend,\yend)}%
\def\squarepppp(#1,#2)|#3|/#4`#5`#6`#7/<#8>[#9]{%
\xpos#1\ypos#2%
\def\next|##1##2##3##4|{%
 \def\xa{##1}\def\xb{##2}\def\xc{##3}\def\xd{##4}\ignorespaces}%
\next|#3|%
\def\next<##1,##2>{\deltax=##1\deltay=##2\ignorespaces}%
\next<#8>%
\def\next[##1`##2`##3`##4;##5`##6`##7`##8]{%
    \def\nodea{##1}\def\nodeb{##2}\def\nodec{##3}\def\noded{##4}%
    \def\labela{##5}\def\labelb{##6}\def\labelc{##7}\def\labeld{##8}\ignorespaces}%
\next[#9]%
\morphism(\xpos,\ypos)|\xd|/{#7}/<\deltax,0>[\nodec`\noded;\labeld]%
\advance \ypos by \deltay%
\morphism(\xpos,\ypos)|\xb|/{#5}/<0,-\deltay>[\nodea`\nodec;\labelb]%
\morphism(\xpos,\ypos)|\xa|/{#4}/<\deltax,0>[\nodea`\nodeb;\labela]%
 \advance \xpos by \deltax%
\morphism(\xpos,\ypos)|\xc|/{#6}/<0,-\deltay>[\nodeb`\noded;\labelc]%
\ignorespaces}%
\def\square{\ifnextchar({\squarep}{\squarep(0,0)}}%
\def\squarep(#1){\ifnextchar|{\squarepp(#1)}{\squarepp(#1)|alrb|}}%
\def\squarepp(#1)|#2|{\ifnextchar/{\squareppp(#1)|#2|}%
    {\squareppp(#1)|#2|/>`>`>`>/}}%
\def\squareppp(#1)|#2|/#3`#4`#5`#6/{%
    \ifnextchar<{\squarepppp(#1)|#2|/#3`#4`#5`#6/}%
    {\squarepppp(#1)|#2|/#3`#4`#5`#6/<\default,\default>}}%
\def\ptrianglepppp(#1,#2)|#3|/#4`#5`#6/<#7>[#8]{%
\xpos#1\ypos#2%
\def\next|##1##2##3|{\def\xa{##1}\def\xb{##2}\def\xc{##3}}%
\next|#3|%
\def\next<##1,##2>{\deltax=##1\deltay=##2\ignorespaces}%
\next<#7>%
\def\next[##1`##2`##3;##4`##5`##6]{%
    \def\nodea{##1}\def\nodeb{##2}\def\nodec{##3}%
    \def\labela{##4}\def\labelb{##5}\def\labelc{##6}}%
\next[#8]%
\advance\ypos by \deltay%
\morphism(\xpos,\ypos)|\xa|/{#4}/<\deltax,0>[\nodea`\nodeb;\labela]%
\morphism(\xpos,\ypos)|\xb|/{#5}/<0,-\deltay>[\nodea`\nodec;\labelb]%
\advance\xpos by \deltax%
\morphism(\xpos,\ypos)|\xc|/{#6}/<-\deltax,-\deltay>[\nodeb`\nodec;\labelc]%
\ignorespaces}%
\def\qtrianglepppp(#1,#2)|#3|/#4`#5`#6/<#7>[#8]{%
\xpos#1\ypos#2%
\def\next|##1##2##3|{\def\xa{##1}\def\xb{##2}\def\xc{##3}}%
\next|#3|%
\def\next<##1,##2>{\deltax=##1\deltay=##2\ignorespaces}%
\next<#7>%
\def\next[##1`##2`##3;##4`##5`##6]{%
    \def\nodea{##1}\def\nodeb{##2}\def\nodec{##3}%
    \def\labela{##4}\def\labelb{##5}\def\labelc{##6}}%
\next[#8]%
\advance\ypos by \deltay%
\morphism(\xpos,\ypos)|\xa|/{#4}/<\deltax,0>[\nodea`\nodeb;\labela]%
\morphism(\xpos,\ypos)|\xb|/{#5}/<\deltax,-\deltay>[\nodea`\nodec;\labelb]%
\advance\xpos by \deltax%
\morphism(\xpos,\ypos)|\xc|/{#6}/<0,-\deltay>[\nodeb`\nodec;\labelc]%
\ignorespaces}%
\def\dtrianglepppp(#1,#2)|#3|/#4`#5`#6/<#7>[#8]{%
\xpos#1\ypos#2%
\def\next|##1##2##3|{\def\xa{##1}\def\xb{##2}\def\xc{##3}}%
\next|#3|%
\def\next<##1,##2>{\deltax=##1\deltay=##2\ignorespaces}%
\next<#7>%
\def\next[##1`##2`##3;##4`##5`##6]{%
    \def\nodea{##1}\def\nodeb{##2}\def\nodec{##3}%
    \def\labela{##4}\def\labelb{##5}\def\labelc{##6}}%
\next[#8]%
\morphism(\xpos,\ypos)|\xc|/{#6}/<\deltax,0>[\nodeb`\nodec;\labelc]%
\advance\ypos by \deltay\advance \xpos by \deltax%
\morphism(\xpos,\ypos)|\xa|/{#4}/<-\deltax,-\deltay>[\nodea`\nodeb;\labela]%
\morphism(\xpos,\ypos)|\xb|/{#5}/<0,-\deltay>[\nodea`\nodec;\labelb]%
\ignorespaces}%
\def\btrianglepppp(#1,#2)|#3|/#4`#5`#6/<#7>[#8]{%
\xpos#1\ypos#2%
\def\next|##1##2##3|{\def\xa{##1}\def\xb{##2}\def\xc{##3}}%
\next|#3|%
\def\next<##1,##2>{\deltax=##1\deltay=##2\ignorespaces}%
\next<#7>%
\def\next[##1`##2`##3;##4`##5`##6]{%
    \def\nodea{##1}\def\nodeb{##2}\def\nodec{##3}%
    \def\labela{##4}\def\labelb{##5}\def\labelc{##6}}%
\next[#8]%
\morphism(\xpos,\ypos)|\xc|/{#6}/<\deltax,0>[\nodeb`\nodec;\labelc]%
\advance\ypos by \deltay%
\morphism(\xpos,\ypos)|\xa|/{#4}/<0,-\deltay>[\nodea`\nodeb;\labela]%
\morphism(\xpos,\ypos)|\xb|/{#5}/<\deltax,-\deltay>[\nodea`\nodec;\labelb]%
\ignorespaces}%
\def\Atrianglepppp(#1,#2)|#3|/#4`#5`#6/<#7>[#8]{%
\xpos#1\ypos#2%
\def\next|##1##2##3|{\def\xa{##1}\def\xb{##2}\def\xc{##3}}%
\next|#3|%
\def\next<##1,##2>{\deltax=##1\deltay=##2\ignorespaces}%
\next<#7>%
\def\next[##1`##2`##3;##4`##5`##6]{%
    \def\nodea{##1}\def\nodeb{##2}\def\nodec{##3}%
    \def\labela{##4}\def\labelb{##5}\def\labelc{##6}}%
\next[#8]%
\multiply\deltax by 2%
\morphism(\xpos,\ypos)|\xc|/{#6}/<\deltax,0>[\nodeb`\nodec;\labelc]%
\divide\deltax by 2%
\advance\ypos by \deltay\advance\xpos by \deltax%
\morphism(\xpos,\ypos)|\xa|/{#4}/<-\deltax,-\deltay>[\nodea`\nodeb;\labela]%
\morphism(\xpos,\ypos)|\xb|/{#5}/<\deltax,-\deltay>[\nodea`\nodec;\labelb]%
\ignorespaces}%
\def\Vtrianglepppp(#1,#2)|#3|/#4`#5`#6/<#7>[#8]{%
\xpos#1\ypos#2%
\def\next|##1##2##3|{\def\xa{##1}\def\xb{##2}\def\xc{##3}}%
\next|#3|%
\def\next<##1,##2>{\deltax=##1\deltay=##2\ignorespaces}%
\next<#7>%
\def\next[##1`##2`##3;##4`##5`##6]{%
    \def\nodea{##1}\def\nodeb{##2}\def\nodec{##3}%
    \def\labela{##4}\def\labelb{##5}\def\labelc{##6}}%
\next[#8]%
\advance\ypos by \deltay%
\morphism(\xpos,\ypos)|\xb|/{#5}/<\deltax,-\deltay>[\nodea`\nodec;\labelb]%
\multiply\deltax by 2%
\morphism(\xpos,\ypos)|\xa|/{#4}/<\deltax,0>[\nodea`\nodeb;\labela]%
\advance\xpos by \deltax \divide \deltax by 2%
\morphism(\xpos,\ypos)|\xc|/{#6}/<-\deltax,-\deltay>[\nodeb`\nodec;\labelc]%
\ignorespaces}%
\def\Ctrianglepppp(#1,#2)|#3|/#4`#5`#6/<#7>[#8]{%
\xpos#1\ypos#2%
\def\next|##1##2##3|{\def\xa{##1}\def\xb{##2}\def\xc{##3}}%
\next|#3|%
\def\next<##1,##2>{\deltax=##1\deltay=##2\ignorespaces}%
\next<#7>%
\def\next[##1`##2`##3;##4`##5`##6]{%
    \def\nodea{##1}\def\nodeb{##2}\def\nodec{##3}%
    \def\labela{##4}\def\labelb{##5}\def\labelc{##6}}%
\next[#8]%
\advance \ypos by \deltay%
\morphism(\xpos,\ypos)|\xc|/{#6}/<\deltax,-\deltay>[\nodeb`\nodec;\labelc]%
\advance\ypos by \deltay \advance \xpos by \deltax%
\morphism(\xpos,\ypos)|\xa|/{#4}/<-\deltax,-\deltay>[\nodea`\nodeb;\labela]%
\multiply\deltay by 2%
\morphism(\xpos,\ypos)|\xb|/{#5}/<0,-\deltay>[\nodea`\nodec;\labelb]%
\ignorespaces}%
\def\Dtrianglepppp(#1,#2)|#3|/#4`#5`#6/<#7>[#8]{%
\xpos#1\ypos#2%
\def\next|##1##2##3|{\def\xa{##1}\def\xb{##2}\def\xc{##3}}%
\next|#3|%
\def\next<##1,##2>{\deltax=##1\deltay=##2\ignorespaces}%
\next<#7>%
\def\next[##1`##2`##3;##4`##5`##6]{%
    \def\nodea{##1}\def\nodeb{##2}\def\nodec{##3}%
    \def\labela{##4}\def\labelb{##5}\def\labelc{##6}}%
\next[#8]%
\advance\xpos by \deltax \advance\ypos by \deltay%
\morphism(\xpos,\ypos)|\xc|/{#6}/<-\deltax,-\deltay>[\nodeb`\nodec;\labelc]%
\advance\xpos by -\deltax \advance\ypos by \deltay%
\morphism(\xpos,\ypos)|\xb|/{#5}/<\deltax,-\deltay>[\nodea`\nodeb;\labelb]%
\multiply \deltay by 2%
\morphism(\xpos,\ypos)|\xa|/{#4}/<0,-\deltay>[\nodea`\nodec;\labela]%
\ignorespaces}%
\def\ptrianglep(#1){\ifnextchar|{\ptrianglepp(#1)}{\ptrianglepp(#1)|alr|}}%
\def\ptrianglepp(#1)|#2|{\ifnextchar/{\ptriangleppp(#1)|#2|}%
    {\ptriangleppp(#1)|#2|/>`>`>/}}%
\def\ptriangleppp(#1)|#2|/#3`#4`#5/{%
    \ifnextchar<{\ptrianglepppp(#1)|#2|/#3`#4`#5/}%
    {\ptrianglepppp(#1)|#2|/#3`#4`#5/<\default,\default>}}%
\def\qtrianglep(#1){\ifnextchar|{\qtrianglepp(#1)}{\qtrianglepp(#1)|alr|}}%
\def\qtrianglepp(#1)|#2|{\ifnextchar/{\qtriangleppp(#1)|#2|}%
    {\qtriangleppp(#1)|#2|/>`>`>/}}%
\def\qtriangleppp(#1)|#2|/#3`#4`#5/{%
    \ifnextchar<{\qtrianglepppp(#1)|#2|/#3`#4`#5/}%
    {\qtrianglepppp(#1)|#2|/#3`#4`#5/<\default,\default>}}%
\def\dtrianglep(#1){\ifnextchar|{\dtrianglepp(#1)}{\dtrianglepp(#1)|lrb|}}%
\def\dtrianglepp(#1)|#2|{\ifnextchar/{\dtriangleppp(#1)|#2|}%
    {\dtriangleppp(#1)|#2|/>`>`>/}}%
\def\dtriangleppp(#1)|#2|/#3`#4`#5/{%
    \ifnextchar<{\dtrianglepppp(#1)|#2|/#3`#4`#5/}%
    {\dtrianglepppp(#1)|#2|/#3`#4`#5/<\default,\default>}}%
\def\btrianglep(#1){\ifnextchar|{\btrianglepp(#1)}{\btrianglepp(#1)|lrb|}}%
\def\btrianglepp(#1)|#2|{\ifnextchar/{\btriangleppp(#1)|#2|}%
    {\btriangleppp(#1)|#2|/>`>`>/}}%
\def\btriangleppp(#1)|#2|/#3`#4`#5/{%
    \ifnextchar<{\btrianglepppp(#1)|#2|/#3`#4`#5/}%
    {\btrianglepppp(#1)|#2|/#3`#4`#5/<\default,\default>}}%
\def\Atrianglep(#1){\ifnextchar|{\Atrianglepp(#1)}{\Atrianglepp(#1)|lrb|}}%
\def\Atrianglepp(#1)|#2|{\ifnextchar/{\Atriangleppp(#1)|#2|}%
    {\Atriangleppp(#1)|#2|/>`>`>/}}%
\def\Atriangleppp(#1)|#2|/#3`#4`#5/{%
    \ifnextchar<{\Atrianglepppp(#1)|#2|/#3`#4`#5/}%
    {\Atrianglepppp(#1)|#2|/#3`#4`#5/<\default,\default>}}%
\def\Vtrianglep(#1){\ifnextchar|{\Vtrianglepp(#1)}{\Vtrianglepp(#1)|alb|}}%
\def\Vtrianglepp(#1)|#2|{\ifnextchar/{\Vtriangleppp(#1)|#2|}%
    {\Vtriangleppp(#1)|#2|/>`>`>/}}%
\def\Vtriangleppp(#1)|#2|/#3`#4`#5/{%
    \ifnextchar<{\Vtrianglepppp(#1)|#2|/#3`#4`#5/}%
    {\Vtrianglepppp(#1)|#2|/#3`#4`#5/<\default,\default>}}%
\def\Ctrianglep(#1){\ifnextchar|{\Ctrianglepp(#1)}{\Ctrianglepp(#1)|arb|}}%
\def\Ctrianglepp(#1)|#2|{\ifnextchar/{\Ctriangleppp(#1)|#2|}%
    {\Ctriangleppp(#1)|#2|/>`>`>/}}%
\def\Ctriangleppp(#1)|#2|/#3`#4`#5/{%
    \ifnextchar<{\Ctrianglepppp(#1)|#2|/#3`#4`#5/}%
    {\Ctrianglepppp(#1)|#2|/#3`#4`#5/<\default,\default>}}%
\def\Dtrianglep(#1){\ifnextchar|{\Dtrianglepp(#1)}{\Dtrianglepp(#1)|alb|}}%
\def\Dtrianglepp(#1)|#2|{\ifnextchar/{\Dtriangleppp(#1)|#2|}%
    {\Dtriangleppp(#1)|#2|/>`>`>/}}%
\def\Dtriangleppp(#1)|#2|/#3`#4`#5/{%
    \ifnextchar<{\Dtrianglepppp(#1)|#2|/#3`#4`#5/}%
    {\Dtrianglepppp(#1)|#2|/#3`#4`#5/<\default,\default>}}%
\def\Atrianglepairpppp(#1)|#2|/#3`#4`#5`#6`#7/<#8>[#9]{%
\def\next(##1,##2){\xpos##1\ypos##2}%
\next(#1)%
\def\next|##1##2##3##4##5|{\def\xa{##1}\def\xb{##2}%
\def\xc{##3}\def\xd{##4}\def\xe{##5}}%
\next|#2|%
\def\next<##1,##2>{\deltax=##1\deltay=##2\ignorespaces}%
\next<#8>%
\def\next[##1`##2`##3`##4;##5`##6`##7`##8`##9]{%
 \def\nodea{##1}\def\nodeb{##2}\def\nodec{##3}\def\noded{##4}%
 \def\labela{##5}\def\labelb{##6}\def\labelc{##7}\def\labeld{##8}\def\labele{##9}}%
\next[#9]%
\morphism(\xpos,\ypos)|\xd|/{#6}/<\deltax,0>[\nodeb`\nodec;\labeld]%
\advance\xpos by \deltax%
\morphism(\xpos,\ypos)|\xe|/{#7}/<\deltax,0>[\nodec`\noded;\labele]%
\advance\ypos by \deltay%
\morphism(\xpos,\ypos)|\xa|/{#3}/<-\deltax,-\deltay>[\nodea`\nodeb;\labela]%
\morphism(\xpos,\ypos)|\xb|/{#4}/<0,-\deltay>[\nodea`\nodec;\labelb]%
\morphism(\xpos,\ypos)|\xc|/{#5}/<\deltax,-\deltay>[\nodea`\noded;\labelc]%
\ignorespaces}%
\def\Vtrianglepairpppp(#1)|#2|/#3`#4`#5`#6`#7/<#8>[#9]{%
\def\next(##1,##2){\xpos##1\ypos##2}%
\next(#1)%
\def\next|##1##2##3##4##5|{\def\xa{##1}\def\xb{##2}%
\def\xc{##3}\def\xd{##4}\def\xe{##5}}%
\next|#2|%
\def\next<##1,##2>{\deltax=##1\deltay=##2\ignorespaces}%
\next<#8>%
\def\next[##1`##2`##3`##4;##5`##6`##7`##8`##9]{%
 \def\nodea{##1}\def\nodeb{##2}\def\nodec{##3}\def\noded{##4}%
 \def\labela{##5}\def\labelb{##6}\def\labelc{##7}\def\labeld{##8}\def\labele{##9}}%
\next[#9]%
\advance\ypos by \deltay%
\morphism(\xpos,\ypos)|\xa|/{#3}/<\deltax,0>[\nodea`\nodeb;\labela]%
\morphism(\xpos,\ypos)|\xc|/{#5}/<\deltax,-\deltay>[\nodea`\noded;\labelc]%
\advance\xpos by \deltax%
\morphism(\xpos,\ypos)|\xb|/{#4}/<\deltax,0>[\nodeb`\nodec;\labelb]%
\morphism(\xpos,\ypos)|\xd|/{#6}/<0,-\deltay>[\nodeb`\noded;\labeld]%
\advance\xpos by \deltax%
\morphism(\xpos,\ypos)|\xe|/{#7}/<-\deltax,-\deltay>[\nodec`\noded;\labele]%
\ignorespaces}%
\def\Ctrianglepairpppp(#1)|#2|/#3`#4`#5`#6`#7/<#8>[#9]{%
\def\next(##1,##2){\xpos##1\ypos##2}%
\next(#1)%
\def\next|##1##2##3##4##5|{\def\xa{##1}\def\xb{##2}%
\def\xc{##3}\def\xd{##4}\def\xe{##5}}%
\next|#2|%
\def\next<##1,##2>{\deltax=##1\deltay=##2\ignorespaces}%
\next<#8>%
\def\next[##1`##2`##3`##4;##5`##6`##7`##8`##9]{%
 \def\nodea{##1}\def\nodeb{##2}\def\nodec{##3}\def\noded{##4}%
 \def\labela{##5}\def\labelb{##6}\def\labelc{##7}\def\labeld{##8}\def\labele{##9}}%
\next[#9]%
\advance\ypos by \deltay%
\morphism(\xpos,\ypos)|\xe|/{#7}/<0,-\deltay>[\nodec`\noded;\labele]%
\advance\xpos by -\deltax%
\morphism(\xpos,\ypos)|\xc|/{#5}/<\deltax,0>[\nodeb`\nodec;\labelc]%
\morphism(\xpos,\ypos)|\xd|/{#6}/<\deltax,-\deltay>[\nodeb`\noded;\labeld]%
\advance\ypos by \deltay%
\advance\xpos by \deltax%
\morphism(\xpos,\ypos)|\xa|/{#3}/<-\deltax,-\deltay>[\nodea`\nodeb;\labela]%
\morphism(\xpos,\ypos)|\xb|/{#4}/<0,-\deltay>[\nodea`\nodec;\labelb]%
\ignorespaces}%
\def\Dtrianglepairpppp(#1)|#2|/#3`#4`#5`#6`#7/<#8>[#9]{%
\def\next(##1,##2){\xpos##1\ypos##2}%
\next(#1)%
\def\next|##1##2##3##4##5|{\def\xa{##1}\def\xb{##2}%
\def\xc{##3}\def\xd{##4}\def\xe{##5}}%
\next|#2|%
\def\next<##1,##2>{\deltax=##1\deltay=##2\ignorespaces}%
\next<#8>%
\def\next[##1`##2`##3`##4;##5`##6`##7`##8`##9]{%
 \def\nodea{##1}\def\nodeb{##2}\def\nodec{##3}\def\noded{##4}%
 \def\labela{##5}\def\labelb{##6}\def\labelc{##7}\def\labeld{##8}\def\labele{##9}}%
\next[#9]%
\advance\ypos by \deltay%
\morphism(\xpos,\ypos)|\xc|/{#5}/<\deltax,0>[\nodeb`\nodec;\labelc]%
\morphism(\xpos,\ypos)|\xd|/{#6}/<0,-\deltay>[\nodeb`\noded;\labeld]%
\advance\ypos by \deltay%
\morphism(\xpos,\ypos)|\xa|/{#3}/<0,-\deltay>[\nodea`\nodeb;\labela]%
\morphism(\xpos,\ypos)|\xb|/{#4}/<\deltax,-\deltay>[\nodea`\nodec;\labelb]%
\advance\ypos by -\deltay%
\advance\xpos by \deltax%
\morphism(\xpos,\ypos)|\xe|/{#7}/<-\deltax,-\deltay>[\nodec`\noded;\labele]%
\ignorespaces}%
\def\Atrianglepairp(#1){\ifnextchar|{\Atrianglepairpp(#1)}%
{\Atrianglepairpp(#1)|lmrbb|}}%
\def\Atrianglepairpp(#1)|#2|{\ifnextchar/{\Atrianglepairppp(#1)|#2|}%
    {\Atrianglepairppp(#1)|#2|/>`>`>`>`>/}}%
\def\Atrianglepairppp(#1)|#2|/#3`#4`#5`#6`#7/{%
    \ifnextchar<{\Atrianglepairpppp(#1)|#2|/#3`#4`#5`#6`#7/}%
    {\Atrianglepairpppp(#1)|#2|/#3`#4`#5`#6`#7/<\default,\default>}}%
\def\Vtrianglepairp(#1){\ifnextchar|{\Vtrianglepairpp(#1)}%
{\Vtrianglepairpp(#1)|aalmr|}}%
\def\Vtrianglepairpp(#1)|#2|{\ifnextchar/{\Vtrianglepairppp(#1)|#2|}%
    {\Vtrianglepairppp(#1)|#2|/>`>`>`>`>/}}%
\def\Vtrianglepairppp(#1)|#2|/#3`#4`#5`#6`#7/{%
    \ifnextchar<{\Vtrianglepairpppp(#1)|#2|/#3`#4`#5`#6`#7/}%
    {\Vtrianglepairpppp(#1)|#2|/#3`#4`#5`#6`#7/<\default,\default>}}%
\def\Ctrianglepairp(#1){\ifnextchar|{\Ctrianglepairpp(#1)}%
{\Ctrianglepairpp(#1)|lrmlr|}}%
\def\Ctrianglepairpp(#1)|#2|{\ifnextchar/{\Ctrianglepairppp(#1)|#2|}%
    {\Ctrianglepairppp(#1)|#2|/>`>`>`>`>/}}%
\def\Ctrianglepairppp(#1)|#2|/#3`#4`#5`#6`#7/{%
    \ifnextchar<{\Ctrianglepairpppp(#1)|#2|/#3`#4`#5`#6`#7/}%
    {\Ctrianglepairpppp(#1)|#2|/#3`#4`#5`#6`#7/<\default,\default>}}%
\def\Dtrianglepairp(#1){\ifnextchar|{\Dtrianglepairpp(#1)}%
{\Dtrianglepairpp(#1)|lrmlr|}}%
\def\Dtrianglepairpp(#1)|#2|{\ifnextchar/{\Dtrianglepairppp(#1)|#2|}%
    {\Dtrianglepairppp(#1)|#2|/>`>`>`>`>/}}%
\def\Dtrianglepairppp(#1)|#2|/#3`#4`#5`#6`#7/{%
    \ifnextchar<{\Dtrianglepairpppp(#1)|#2|/#3`#4`#5`#6`#7/}%
    {\Dtrianglepairpppp(#1)|#2|/#3`#4`#5`#6`#7/<\default,\default>}}%
\def\pplace[#1](#2,#3)[#4]{\POS(#2,#3)*+!!<0ex,\axis>!#1{#4}\ignorespaces}%
\def\cplace(#1,#2)[#3]{\POS(#1,#2)*+!!<0ex,\axis>{#3}\ignorespaces}%
\def\pullback#1]#2]{\square#1]\trident#2]\ignorespaces}%
\def\tridentppp|#1#2#3|/#4`#5`#6/<#7,#8>[#9]{%
\def\next[##1;##2`##3`##4]{\def\nodee{##1}\def\labele{##2}%
   \def\labelf{##3}\def\labelg{##4}}%
\next[#9]%
\advance \xpos by -\deltax%
\advance \xpos by -#7\advance \ypos by #8%
\advance\deltax by #7%
\morphism(\xpos,\ypos)|#1|/{#4}/<\deltax,-#8>[\nodee`\nodeb;\labele]%
\advance\deltax by -#7%
\morphism(\xpos,\ypos)|#2|/{#5}/<#7,-#8>[\nodee`\nodea;\labelf]%
\advance\deltay by #8%
\morphism(\xpos,\ypos)|#3|/{#6}/<#7,-\deltay>[\nodee`\nodec;\labelg]%
\ignorespaces}%
\def\trident{\ifnextchar|{\tridentp}{\tridentp|amb|}}%
\def\tridentp|#1|{\ifnextchar/{\tridentpp|#1|}{\tridentpp|#1|/{>}`{>}`{>}/}}%
\def\tridentpp|#1|/#2/{\ifnextchar<{\tridentppp|#1|/#2/}%
  {\tridentppp|#1|/#2/<500,500>}}%
\def\setmorphismwidth#1#2#3#4{%
 \setbox0=\hbox{$#1{\labelstyle#3#3}#2$}#4=\wd0%
 \divide #4 by 2 \divide #4 by \ul%
 \advance #4 by 350 \ratchet{#4}{500}}%
\def\setSquarewidth[#1`#2`#3`#4;#5`#6`#7`#8]{%
 \setmorphismwidth{#1}{#2}{#5}{\topw}%
 \setmorphismwidth{#3}{#4}{#8}{\botw}%
\ratchet{\topw}{\botw}}%
\def\Squarepppp(#1)|#2|/#3/<#4>[#5]{%
 \setSquarewidth[#5]%
 \squarepppp(#1)|#2|/#3/<\topw,#4>[#5]%
\ignorespaces}%
\def\Squarep(#1){\ifnextchar|{\Squarepp(#1)}{\Squarepp(#1)|alrb|}}%
\def\Squarepp(#1)|#2|{\ifnextchar/{\Squareppp(#1)|#2|}%
    {\Squareppp(#1)|#2|/>`>`>`>/}}%
\def\Squareppp(#1)|#2|/#3`#4`#5`#6/{%
    \ifnextchar<{\Squarepppp(#1)|#2|/#3`#4`#5`#6/}%
    {\Squarepppp(#1)|#2|/#3`#4`#5`#6/<\default>}}%
\def\hsquarespppp(#1,#2)|#3|/#4/<#5>[#6;#7]{%
\Xpos=#1\Ypos=#2%
\def\next|##1##2##3##4##5##6##7|{%
 \def\Xa{##1}\def\Xb{##2}\def\Xc{##3}\def\Xd{##4}%
 \def\Xe{##5}\def\Xf{##6}\def\Xg{##7}}%
\next|#3|%
\def\next<##1,##2,##3>{\deltaX=##1\deltaXprime=##2\deltaY=##3}%
\next<#5>%
\def\next[##1`##2`##3`##4`##5`##6]{%
 \def\Nodea{##1}\def\Nodeb{##2}\def\Nodec{##3}%
 \def\Noded{##4}\def\Nodee{##5}\def\Nodef{##6}}%
\next[#6]%
\def\next[##1`##2`##3`##4`##5`##6`##7]{%
 \def\Labela{##1}\def\Labelb{##2}\def\Labelc{##3}\def\Labeld{##4}%
 \def\Labele{##5}\def\Labelf{##6}\def\Labelg{##7}}%
\next[#7]%
\dohsquares/#4/}%
\def\dohsquares/#1`#2`#3`#4`#5`#6`#7/{%
\squarepppp(\Xpos,\Ypos)|\Xa\Xc\Xd\Xf|/#1`#3`#4`#6/<\deltaX,\deltaY>%
 [\Nodea`\Nodeb`\Noded`\Nodee;\Labela`\Labelc`\Labeld`\Labelf]%
 \advance \Xpos by \deltaX%
\squarepppp(\Xpos,\Ypos)|\Xb\Xd\Xe\Xg|/#2``#5`#7/<\deltaXprime,\deltaY>%
[\Nodeb`\Nodec`\Nodee`\Nodef;\Labelb``\Labele`\Labelg]%
\ignorespaces}%
\def\hsquaresp(#1){\ifnextchar|{\hsquarespp(#1)}{\hsquarespp%
(#1)|aalmrbb|}}%
\def\hsquarespp(#1)|#2|{\ifnextchar/{\hsquaresppp(#1)|#2|}%
    {\hsquaresppp(#1)|#2|/>`>`>`>`>`>`>/}}%
\def\hsquaresppp(#1)|#2|/#3/{%
    \ifnextchar<{\hsquarespppp(#1)|#2|/#3/}%
    {\hsquarespppp(#1)|#2|/#3/<\default,\default,\default>}}%
\def\hSquarespppp(#1,#2)|#3|/#4/<#5>[#6;#7]{%
\Xpos=#1\Ypos=#2%
\def\next|##1##2##3##4##5##6##7|{%
 \def\Xa{##1}\def\Xb{##2}\def\Xc{##3}\def\Xd{##4}%
 \def\Xe{##5}\def\Xf{##6}\def\Xg{##7}}%
\next|#3|%
\deltaY=#5%
\def\next[##1`##2`##3`##4`##5`##6]{%
 \def\Nodea{##1}\def\Nodeb{##2}\def\Nodec{##3}%
 \def\Noded{##4}\def\Nodee{##5}\def\Nodef{##6}}%
\next[#6]%
\def\next[##1`##2`##3`##4`##5`##6`##7]{%
 \def\Labela{##1}\def\Labelb{##2}\def\Labelc{##3}\def\Labeld{##4}%
 \def\Labele{##5}\def\Labelf{##6}\def\Labelg{##7}}%
\next[#7]%
\dohSquares/#4/}%
\def\dohSquares/#1`#2`#3`#4`#5`#6`#7/{%
\Squarepppp(\Xpos,\Ypos)|\Xa\Xc\Xd\Xf|/#1`#3`#4`#6/<\deltaY>%
 [\Nodea`\Nodeb`\Noded`\Nodee;\Labela`\Labelc`\Labeld`\Labelf]%
 \advance \Xpos by \topw%
\Squarepppp(\Xpos,\Ypos)|\Xb\Xd\Xe\Xg|/#2``#5`#7/<\deltaY>%
[\Nodeb`\Nodec`\Nodee`\Nodef;\Labelb``\Labele`\Labelg]%
\ignorespaces}%
\def\hSquaresp(#1){\ifnextchar|{\hSquarespp(#1)}{\hSquarespp%
(#1)|aalmrbb|}}%
\def\hSquarespp(#1)|#2|{\ifnextchar/{\hSquaresppp(#1)|#2|}%
    {\hSquaresppp(#1)|#2|/>`>`>`>`>`>`>/}}%
\def\hSquaresppp(#1)|#2|/#3/{%
    \ifnextchar<{\hSquarespppp(#1)|#2|/#3/}%
    {\hSquarespppp(#1)|#2|/#3/<\default>}}%
\def\vsquarespppp(#1,#2)|#3|/#4/<#5>[#6;#7]{%
\Xpos=#1\Ypos=#2%
\def\next|##1##2##3##4##5##6##7|{%
 \def\Xa{##1}\def\Xb{##2}\def\Xc{##3}\def\Xd{##4}%
 \def\Xe{##5}\def\Xf{##6}\def\Xg{##7}}%
\next|#3|%
\def\next<##1,##2,##3>{\deltaX=##1\deltaY=##2\deltaYprime=##3}%
\next<#5>%
\def\next[##1`##2`##3`##4`##5`##6]{%
 \def\Nodea{##1}\def\Nodeb{##2}\def\Nodec{##3}%
 \def\Noded{##4}\def\Nodee{##5}\def\Nodef{##6}}%
\next[#6]%
\def\next[##1`##2`##3`##4`##5`##6`##7]{%
 \def\Labela{##1}\def\Labelb{##2}\def\Labelc{##3}\def\Labeld{##4}%
 \def\Labele{##5}\def\Labelf{##6}\def\Labelg{##7}}%
\next[#7]%
\dovsquares/#4/}%
\def\dovsquares/#1`#2`#3`#4`#5`#6`#7/{%
\squarepppp(\Xpos,\Ypos)|\Xd\Xe\Xf\Xg|/`#5`#6`#7/<\deltaX,\deltaYprime>%
[\Nodec`\Noded`\Nodee`\Nodef;`\Labele`\Labelf`\Labelg]%
 \advance\Ypos by \deltaYprime%
\squarepppp(\Xpos,\Ypos)|\Xa\Xb\Xc\Xd|/#1`#2`#3`#4/<\deltaX,\deltaY>%
 [\Nodea`\Nodeb`\Nodec`\Noded;\Labela`\Labelb`\Labelc`\Labeld]%
\ignorespaces}%
\def\vsquaresp(#1){\ifnextchar|{\vsquarespp(#1)}{\vsquarespp%
(#1)|aalmrbb|}}%
\def\vsquarespp(#1)|#2|{\ifnextchar/{\vsquaresppp(#1)|#2|}%
    {\vsquaresppp(#1)|#2|/>`>`>`>`>`>`>/}}%
\def\vsquaresppp(#1)|#2|/#3/{%
    \ifnextchar<{\vsquarespppp(#1)|#2|/#3/}%
    {\vsquarespppp(#1)|#2|/#3/<\default,\default,\default>}}%
\def\vSquarespppp(#1,#2)|#3|/#4/<#5,#6>[#7;#8]{%
\Xpos=#1\Ypos=#2%
\def\next|##1##2##3##4##5##6##7|{%
 \def\Xa{##1}\def\Xb{##2}\def\Xc{##3}\def\Xd{##4}%
 \def\Xe{##5}\def\Xf{##6}\def\Xg{##7}}%
\next|#3|%
\deltaX=#5%
\deltaY=#6%
\def\next[##1`##2`##3`##4`##5`##6]{%
 \def\Nodea{##1}\def\Nodeb{##2}\def\Nodec{##3}%
 \def\Noded{##4}\def\Nodee{##5}\def\Nodef{##6}}%
\next[#7]%
\def\next[##1`##2`##3`##4`##5`##6`##7]{%
 \def\Labela{##1}\def\Labelb{##2}\def\Labelc{##3}\def\Labeld{##4}%
 \def\Labele{##5}\def\Labelf{##6}\def\Labelg{##7}}%
\next[#8]%
\dovSquares/#4/\ignorespaces}%
\def\dovSquares/#1`#2`#3`#4`#5`#6`#7/{%
\setmorphismwidth{\Nodea}{\Nodeb}{\Labela}{\topw}%
\setmorphismwidth{\Nodec}{\Noded}{\Labeld}{\botw}%
\ratchet{\topw}{\botw}%
\setmorphismwidth{\Nodee}{\Nodef}{\Labelg}{\botw}%
\ratchet{\topw}{\botw}%
\square(\Xpos,\Ypos)|\Xd\Xe\Xf\Xg|/`#5`#6`#7/<\topw,\deltaX>%
 [\Nodec`\Noded`\Nodee`\Nodef;`\Labele`\Labelf`\Labelg]%
\advance \Ypos by \deltaX%
\square(\Xpos,\Ypos)|\Xa\Xb\Xc\Xd|/#1`#2`#3`#4/<\topw,\deltaY>%
 [\Nodea`\Nodeb`\Nodec`\Noded;\Labela`\Labelb`\Labelc`\Labeld]%
}%
\def\vSquaresp(#1){\ifnextchar|{\vSquarespp(#1)}{\vSquarespp%
(#1)|alrmlrb|}}%
\def\vSquarespp(#1)|#2|{\ifnextchar/{\vSquaresppp(#1)|#2|}%
    {\vSquaresppp(#1)|#2|/>`>`>`>`>`>`>/}}%
\def\vSquaresppp(#1)|#2|/#3/{%
    \ifnextchar<{\vSquarespppp(#1)|#2|/#3/}%
    {\vSquarespppp(#1)|#2|/#3/<\default,\default>}}%
\def\osquarepppp(#1)|#2|/#3`#4`#5`#6/<#7>[#8]{\squarepppp%
 (#1)|#2|/#3`#4`#5`#6/<#7>[#8]%
 \let\Nodea\nodea\let\Nodeb\nodeb%
\let\Nodec\nodec\let\Noded\noded\Xpos=\xpos\Ypos=\ypos%
\deltaX=\deltax \deltaY=\deltay \isquare}%
\def\osquarep(#1){\ifnextchar|{\osquarepp(#1)}{\osquarepp(#1)|alrb|}}%
\def\osquarepp(#1)|#2|{\ifnextchar/{\osquareppp(#1)|#2|}%
    {\osquareppp(#1)|#2|/>`>`>`>/}}%
\def\osquareppp(#1)|#2|/#3`#4`#5`#6/{%
    \ifnextchar<{\osquarepppp(#1)|#2|/#3`#4`#5`#6/}%
    {\osquarepppp(#1)|#2|/#3`#4`#5`#6/<1500,1500>}}%
\def\isquarepppp(#1)|#2|/#3`#4`#5`#6/<#7>[#8]{%
 \squarepppp(#1)|#2|/#3`#4`#5`#6/<#7>[#8]%
\ifnextchar|{\cubep}{\cubep|mmmm|}}%
\def\cubep|#1|{\ifnextchar/{\cubepp|#1|}{\cubepp|#1|/>`>`>`>/}}%
\def\isquare{\ifnextchar({\isquarep}{\isquarep(\default,\default)}}%
\def\isquarep(#1){\ifnextchar|{\isquarepp(#1)}{\isquarepp(#1)|alrb|}}%
\def\isquarepp(#1)|#2|{\ifnextchar/{\isquareppp(#1)|#2|}%
    {\isquareppp(#1)|#2|/>`>`>`>/}}%
\def\isquareppp(#1)|#2|/#3`#4`#5`#6/{%
    \ifnextchar<{\isquarepppp(#1)|#2|/#3`#4`#5`#6/}%
    {\isquarepppp(#1)|#2|/#3`#4`#5`#6/<500,500>}}%
\def\cubepp|#1#2#3#4|/#5`#6`#7`#8/[#9]{%
\def\next[##1`##2`##3`##4]{\gdef\Labela{##1}%
\gdef\Labelb{##2}\gdef\Labelc{##3}\gdef\Labeld{##4}}\next[#9]%
\xend\xpos \yend\ypos%
\Xend\xend\advance\Xend by -\Xpos%
\Yend\yend\advance\Yend by -\Ypos%
\domorphism(\Xpos,\Ypos)|#2|/#6/<\Xend,\Yend>[\Nodeb`\nodeb;\Labelb]%
\advance\Xpos by-\deltaX%
\advance\xend by-\deltax%
\Xend\xend\advance\Xend by -\Xpos%
\domorphism(\Xpos,\Ypos)|#1|/#5/<\Xend,\Yend>[\Nodea`\nodea;\Labela]%
\advance\Ypos by-\deltaY%
\advance\yend by-\deltay%
\Yend\yend\advance\Yend by -\Ypos%
\domorphism(\Xpos,\Ypos)|#3|/#7/<\Xend,\Yend>[\Nodec`\nodec;\Labelc]%
\advance\Xpos by\deltaX%
\advance\xend by\deltax%
\Xend\xend\advance\Xend by -\Xpos%
\domorphism(\Xpos,\Ypos)|#4|/#8/<\Xend,\Yend>[\Noded`\noded;\Labeld]%
\ignorespaces}%
\def\setwdth#1#2{\setbox0\hbox{$\labelstyle#1$}\wdth=\wd0%
\setbox0\hbox{$\labelstyle#2$}\ifnum\wdth<\wd0 \wdth=\wd0 \fi}%
\def\topppp/#1/<#2>^#3_#4{\:%
\ifnum#2=0%
   \setwdth{#3}{#4}\deltax=\wdth \divide \deltax by \ul%
   \advance \deltax by \defaultmargin  \ratchet{\deltax}{200}%
\else \deltax #2%
\fi%
\xy\ar@{#1}^{#3}_{#4}(\deltax,0) \endxy%
\:}%
\def\toppp/#1/<#2>^#3{\ifnextchar_{\topppp/#1/<#2>^{#3}}{\topppp/#1/<#2>^{#3}_{}}}%
\def\topp/#1/<#2>{\ifnextchar^{\toppp/#1/<#2>}{\toppp/#1/<#2>^{}}}%
\def\toop/#1/{\ifnextchar<{\topp/#1/}{\topp/#1/<0>}}%
\def\to{\ifnextchar/{\toop}{\toop/>/}}%
\def\twopppp/#1`#2/<#3>^#4_#5{\:%
\ifnum0=#3%
  \setwdth{#4}{#5}\deltax=\wdth \divide \deltax by \ul \advance \deltax%
  by \defaultmargin \ratchet{\deltax}{200}%
\else \deltax#3 \fi%
\xy\ar@{#1}@<2.5pt>^{#4}(\deltax,0)%
\ar@{#2}@<-2.5pt>_{#5}(\deltax,0)\endxy\:}%
\def\twoppp/#1`#2/<#3>^#4{\ifnextchar_{\twopppp/#1`#2/<#3>^{#4}}%
  {\twopppp/#1`#2/<#3>^{#4}_{}}}%
\def\twopp/#1`#2/<#3>{\ifnextchar^{\twoppp/#1`#2/<#3>}{\twoppp/#1`#2/<#3>^{}}}%
\def\twop/#1`#2/{\ifnextchar<{\twopp/#1`#2/}{\twopp/#1`#2/<0>}}%
\def\threeppppp/#1`#2`#3/<#4>^#5|#6_#7{\:%
\ifnum0=#4%
\setbox0\hbox{$\labelstyle#5$}\wdth=\wd0%
\setbox0\hbox{$\labelstyle#6$}\ifnum\wdth<\wd0 \wdth=\wd0 \fi%
\setbox0\hbox{$\labelstyle#7$}\ifnum\wdth<\wd0 \wdth=\wd0 \fi%
\deltax=\wdth \divide \deltax by \ul \advance \deltax by%
\defaultmargin \ratchet{\deltax}{300}%
\else\deltax#4 \fi%
    \xy \ifnum\wd0=0 \ar@{#2}(\deltax,0)%
    \else \ar@{#2}|{#6}(\deltax,0)\fi%
\ar@{#1}@<4.5pt>^{#5}(\deltax,0)%
\ar@{#3}@<-4.5pt>_{#7}(\deltax,0)\endxy\:}%
\def\threepppp/#1`#2`#3/<#4>^#5|#6{\ifnextchar_{\threeppppp%
  /#1`#2`#3/<#4>^{#5}|{#6}}{\threeppppp/#1`#2`#3/<#4>^{#5}|{#6}_{}}}%
\def\threeppp/#1`#2`#3/<#4>^#5{\ifnextchar|{\threepppp%
  /#1`#2`#3/<#4>^{#5}}{\threepppp/#1`#2`#3/<#4>^{#5}|{}}}%
\def\threepp/#1`#2`#3/<#4>{\ifnextchar^{\threeppp/#1`#2`#3/<#4>}%
  {\threeppp/#1`#2`#3/<#4>^{}}}%
\def\threep/#1`#2`#3/{\ifnextchar<{\threepp/#1`#2`#3/}%
  {\threepp/#1`#2`#3/<0>}}%
\def\twoar(#1,#2){{%
 \scalefactor{0.1}%
 \deltax#1\deltay#2%
 \deltaX=\ifnum\deltax<0-\fi\deltax%
 \deltaY=\ifnum\deltay<0-\fi\deltay%
 \Xend\deltax \multiply \Xend by \deltax%
 \Yend\deltay \multiply \Yend by \deltay%
 \advance\Xend by \Yend \multiply \Xend by 3%
 \ifnum \deltaX > \deltaY%
    \multiply \deltaX by 3 \advance \deltaX by \deltaY%
 \else%
    \multiply \deltaY by 3 \advance \deltaX by \deltaY%
 \fi%
 \multiply\deltax by 500%
 \multiply\deltay by 500%
 \xpos\deltax \multiply \xpos by 3 \divide\xpos by \deltaX%
 \Xpos\deltax \multiply \Xpos by \deltaX \divide \Xpos by \Xend%
 \advance \xpos by \Xpos%
 \ypos\deltay \multiply \ypos by 3 \divide\ypos by \deltaX%
 \Ypos\deltay \multiply \Ypos by \deltaX \divide \Ypos by \Xend%
 \advance \ypos by \Ypos%
 \xy \ar@{=>}(\xpos,\ypos) \endxy%
}\ignorespaces}%
\def\iiixiiipppppp(#1,#2)|#3|/#4/<#5>#6<#7>[#8;#9]{%
 \xpos#1\ypos#2\relax%
 \def\next|##1##2##3##4##5##6##7|{\def\xa{##1}\def\xb{##2}%
 \def\xc{##3}\def\xd{##4}\def\xe{##5}\def\xf{##6}\nextt|##7|}%
 \def\nextt|##1##2##3##4##5##6|{\def\xg{##1}\def\xh{##2}%
 \def\xi{##3}\def\xj{##4}\def\xk{##5}\def\xl{##6}}%
 \next|#3|%
 \def\next<##1,##2>{\deltax##1\deltay##2}%
 \next<#5>%
 \def\next<##1,##2>{\deltaX##1\deltaY##2}%
 \next<#7>%
 \def\next##1{\topw##1\relax%
 \ifodd\topw \def\zl{}\else\def\zl{\relax}\fi \divide\topw by 2
 \ifodd\topw \def\zk{}\else\def\zk{\relax}\fi \divide\topw by 2
 \ifodd\topw \def\zj{}\else\def\zj{\relax}\fi \divide\topw by 2
 \ifodd\topw \def\zi{}\else\def\zi{\relax}\fi \divide\topw by 2
 \ifodd\topw \def\zh{}\else\def\zh{\relax}\fi \divide\topw by 2
 \ifodd\topw \def\zg{}\else\def\zg{\relax}\fi \divide\topw by 2
 \ifodd\topw \def\zf{}\else\def\zf{\relax}\fi \divide\topw by 2
 \ifodd\topw \def\ze{}\else\def\ze{\relax}\fi \divide\topw by 2
 \ifodd\topw \def\zd{}\else\def\zd{\relax}\fi \divide\topw by 2
 \ifodd\topw \def\zc{}\else\def\zc{\relax}\fi \divide\topw by 2
 \ifodd\topw \def\zb{}\else\def\zb{\relax}\fi \divide\topw by 2
 \ifodd\topw \def\za{}\else\def\za{\relax}\fi}%
 \next{#6}%
 \def\next[##1`##2`##3`##4`##5`##6`##7`##8`##9]{%
 \def\nodea{##1}\def\nodeb{##2}\def\nodec{##3}%
 \def\noded{##4}\def\nodee{##5}\def\nodef{##6}%
 \def\nodeg{##7}\def\nodeh{##8}\def\nodei{##9}}%
 \next[#8]%
 \def\next[##1`##2`##3`##4`##5`##6`##7]{%
 \def\labela{##1}\def\labelb{##2}\def\labelc{##3}%
 \def\labeld{##4}\def\labele{##5}\def\labelf{##6}\nextt[##7]}%
 \def\nextt[##1`##2`##3`##4`##5`##6]{%
 \def\labelg{##1}\def\labelh{##2}\def\labeli{##3}%
 \def\labelj{##4}\def\labelk{##5}\def\labell{##6}}%
 \next[#9]%
 \def\next/##1`##2`##3`##4`##5`##6`##7/{%
\morphism(\xpos,\ypos)|\xe|/{##5}/<\deltax,0>[\nodeg`\nodeh;\labele]%
 \ifx\zi\empty\relax \morphism(\xpos,\ypos)||/<-/<-\deltaX,0>[\nodeg`0;]\fi%
 \ifx\zd\empty\relax \morphism(\xpos,\ypos)||<0,-\deltaY>[\nodeg`0;]\fi%
 \advance\xpos by \deltax%
 \morphism(\xpos,\ypos)|\xf|/{##6}/<\deltax,0>[\nodeh`\nodei;\labelf]%
 \ifx\ze\empty\relax \morphism(\xpos,\ypos)||<0,-\deltaY>[\nodeh`0;]\fi%
 \advance\xpos by \deltax%
 \ifx\zf\empty\relax \morphism(\xpos,\ypos)||<0,-\deltaY>[\nodei`0;]\fi%
 \ifx\zl\empty\relax \morphism(\xpos,\ypos)||<\deltaX,0>[\nodei`0;]\fi%
 \advance\ypos by \deltay%
 \ifx\zk\empty\relax \morphism(\xpos,\ypos)||<\deltaX,0>[\nodef`0;]\fi%
 \advance\xpos by -\deltax%
 \morphism(\xpos,\ypos)|\xd|/{##4}/<\deltax,0>[\nodee`\nodef;\labeld]%
 \advance\xpos by -\deltax%
 \morphism(\xpos,\ypos)|\xc|/{##3}/<\deltax,0>[\noded`\nodee;\labelc]%
 \ifx\zh\empty\relax \morphism(\xpos,\ypos)||/<-/<-\deltaX,0>[\noded`0;]\fi%
 \advance\ypos by \deltay%
 \morphism(\xpos,\ypos)|\xa|/{##1}/<\deltax,0>[\nodea`\nodeb;\labela]%
 \ifx\zg\empty\relax \morphism(\xpos,\ypos)||/<-/<-\deltaX,0>[\nodea`0;]\fi%
 \ifx\za\empty\relax \morphism(\xpos,\ypos)||/<-/<0,\deltaY>[\nodea`0;]\fi%
 \advance\xpos by \deltax%
 \morphism(\xpos,\ypos)|\xb|/{##2}/<\deltax,0>[\nodeb`\nodec;\labelb]%
 \ifx\zb\empty\relax \morphism(\xpos,\ypos)||/<-/<0,\deltaY>[\nodeb`0;]\fi%
 \advance\xpos by \deltax%
 \ifx\zc\empty\relax \morphism(\xpos,\ypos)||/<-/<0,\deltaY>[\nodec`0;]\fi%
 \ifx\zj\empty\relax \morphism(\xpos,\ypos)||<\deltaX,0>[\nodec`0;]\fi%
 \nextt/##7/}%
 \def\nextt/##1`##2`##3`##4`##5`##6/{%
 \morphism(\xpos,\ypos)|\xi|/{##3}/<0,-\deltay>[\nodec`\nodef;\labeli]%
 \advance\xpos by -\deltax%
 \morphism(\xpos,\ypos)|\xh|/{##2}/<0,-\deltay>[\nodeb`\nodee;\labelh]%
 \advance\xpos by -\deltax%
 \morphism(\xpos,\ypos)|\xg|/{##1}/<0,-\deltay>[\nodea`\noded;\labelg]%
 \advance\ypos by -\deltay%
 \morphism(\xpos,\ypos)|\xj|/{##4}/<0,-\deltay>[\noded`\nodeg;\labelj]%
 \advance\xpos by \deltax%
 \morphism(\xpos,\ypos)|\xk|/{##5}/<0,-\deltay>[\nodee`\nodeh;\labelk]%
 \advance\xpos by \deltax%
 \morphism(\xpos,\ypos)|\xl|/{##6}/<0,-\deltay>[\nodef`\nodei;\labell]}%
 \next/#4/\ignorespaces}%
\def\iiixiiip(#1){\ifnextchar|{\iiixiiipp(#1)}%
  {\iiixiiipp(#1)|aammbblmrlmr|}}%
\def\iiixiiipp(#1)|#2|{\ifnextchar/{\iiixiiippp(#1)|#2|}%
    {\iiixiiippp(#1)|#2|/>`>`>`>`>`>`>`>`>`>`>`>/}}%
\def\iiixiiippp(#1)|#2|/#3/{%
    \ifnextchar<{\iiixiiipppp(#1)|#2|/#3/}%
    {\iiixiiipppp(#1)|#2|/#3/<\default,\default>}}%
\def\iiixiiipppp(#1)|#2|/#3/<#4>{\ifnextchar[{\iiixiiippppp(#1)|#2|/#3/%
   <#4>0<0,0>}{\iiixiiippppp(#1)|#2|/#3/<#4>}}%
\def\iiixiiippppp(#1)|#2|/#3/<#4>#5{\ifnextchar<%
   {\iiixiiipppppp(#1)|#2|/#3/<#4>{#5}}%
   {\iiixiiipppppp(#1)|#2|/#3/<#4>{#5}<400,400>}}%
\def\iiixiipppppp(#1,#2)|#3|/#4/<#5>#6<#7>[#8;#9]{%
 \xpos#1\ypos#2\relax%
 \def\next|##1##2##3##4##5##6##7|{\def\xa{##1}\def\xb{##2}%
 \def\xc{##3}\def\xd{##4}\def\xe{##5}\def\xf{##6}\def\xg{##7}}%
 \next|#3|%
 \def\next<##1,##2>{\deltax##1\deltay##2}%
 \next<#5>%
 \deltaX#7
 \topw#6
 \def\next{%
 \ifodd\topw \def\za{}\else\def\za{\relax}\fi \divide\topw by 2
 \ifodd\topw \def\zb{}\else\def\zb{\relax}\fi \divide\topw by 2
 \ifodd\topw \def\zc{}\else\def\zc{\relax}\fi \divide\topw by 2
 \ifodd\topw \def\zd{}\else\def\zd{\relax}\fi}%
 \next%
 \def\next[##1`##2`##3`##4`##5`##6]{%
 \def\nodea{##1}\def\nodeb{##2}\def\nodec{##3}%
 \def\noded{##4}\def\nodee{##5}\def\nodef{##6}}%
 \next[#8]%
 \def\next[##1`##2`##3`##4`##5`##6`##7]{%
 \def\labela{##1}\def\labelb{##2}\def\labelc{##3}%
 \def\labeld{##4}\def\labele{##5}\def\labelf{##6}\def\labelg{##7}}%
 \next[#9]%
 \def\next/##1`##2`##3`##4`##5`##6`##7/{%
 \ifx\zc\empty\relax\morphism(\xpos,\ypos)<\deltaX,0>[0`\noded;]\fi%
 \advance\xpos by\deltaX%
 \morphism(\xpos,\ypos)|\xc|/##3/<\deltax,0>[\noded`\nodee;\labelc]%
 \advance\xpos by \deltax%
 \morphism(\xpos,\ypos)|\xd|/##4/<\deltax,0>[\nodee`\nodef;\labeld]%
 \advance\xpos by \deltax%
 \ifx\zd\empty\relax  \morphism(\xpos,\ypos)<\deltaX,0>[\nodef`0;]\fi%
 \advance\xpos by -\deltaX  \advance\xpos by -\deltax
 \advance\xpos by -\deltax  \advance\ypos by \deltay
 \ifx\za\empty\relax\morphism(\xpos,\ypos)<\deltaX,0>[0`\nodea;]\fi%
 \advance\xpos by\deltaX%
 \morphism(\xpos,\ypos)|\xa|/##1/<\deltax,0>[\nodea`\nodeb;\labela]%
 \morphism(\xpos,\ypos)|\xe|/##5/<0,-\deltay>[\nodea`\noded;\labele]%
 \advance\xpos by \deltax%
 \morphism(\xpos,\ypos)|\xb|/##2/<\deltax,0>[\nodeb`\nodec;\labelb]%
 \morphism(\xpos,\ypos)|\xf|/##6/<0,-\deltay>[\nodeb`\nodee;\labelf]%
 \advance\xpos by \deltax%
 \morphism(\xpos,\ypos)|\xg|/##7/<0,-\deltay>[\nodec`\nodef;\labelg]%
 \ifx\zb\empty\relax \morphism(\xpos,\ypos)<\deltaX,0>[\nodec`0;]\fi}%
 \next/#4/\ignorespaces}%
\def\iiixiip(#1){\ifnextchar|{\iiixiipp(#1)}%
  {\iiixiipp(#1)|aabblmr|}}%
\def\iiixiipp(#1)|#2|{\ifnextchar/{\iiixiippp(#1)|#2|}%
    {\iiixiippp(#1)|#2|/>`>`>`>`>`>`>/}}%
\def\iiixiippp(#1)|#2|/#3/{%
    \ifnextchar<{\iiixiipppp(#1)|#2|/#3/}%
    {\iiixiipppp(#1)|#2|/#3/<\default,\default>}}%
\def\iiixiipppp(#1)|#2|/#3/<#4>{\ifnextchar[{\iiixiippppp(#1)|#2|/#3/%
   <#4>{0}<0>}{\iiixiippppp(#1)|#2|/#3/<#4>}}%
\def\iiixiippppp(#1)|#2|/#3/<#4>#5{\ifnextchar<%
   {\iiixiipppppp(#1)|#2|/#3/<#4>{#5}}%
   {\iiixiipppppp(#1)|#2|/#3/<#4>{#5}<400>}}%
\def\node#1(#2,#3)[#4]{%
\expandafter\gdef\csname x@#1\endcsname{#2}%
\expandafter\gdef\csname y@#1\endcsname{#3}%
\expandafter\gdef\csname ob@#1\endcsname{#4}%
\ignorespaces}%
\def\arrowp|#1|{\ifnextchar/{\arrowpp|#1|}{\arrowpp|#1|/>/}}%
\def\arrowpp|#1|/#2/[#3`#4;#5]{%
\xfinish=\csname x@#4\endcsname%
\yfinish=\csname y@#4\endcsname%
\advance\xfinish by -\csname x@#3\endcsname%
\advance\yfinish by -\csname y@#3\endcsname%
\morphism(\csname x@#3\endcsname,\csname y@#3\endcsname)|#1|/{#2}/%
<\xfinish,\yfinish>[\csname ob@#3\endcsname`\csname ob@#4\endcsname;#5]%
}%
\def\Loop(#1,#2)#3(#4,#5){\POS(#1,#2)*+!!<0ex,\axis>{#3}\ar@(#4,#5)}%
\def\iloop#1(#2,#3){\xy\Loop(0,0)#1(#2,#3)\endxy}%
     \let \PATHafterPOS\PATHafterPOS@default%
     \let \arsavedPATHafterPOS@@\relax%
     \let\afterar@@\relax%
\xydef@\endxyobj{\if\inxy@\else\xyerror@{Unexpected \string\endxy}{}\fi%
>  \relax%
>   \dimen@=\Y@max \advance\dimen@-\Y@min%
>   \ifdim\dimen@<\z@ \dimen@=\z@ \Y@min=\z@ \Y@max=\z@ \fi%
>   \dimen@=\X@max \advance\dimen@-\X@min%
>   \ifdim\dimen@<\z@ \dimen@=\z@ \X@min=\z@ \X@max=\z@ \fi%
>   \edef\tmp@{\egroup%
>     \setboxz@h{\kern-\the\X@min \boxz@}%
>     \ht\z@=\the\Y@max \dp\z@=-\the\Y@min \wdz@=\the\dimen@%
>     \noexpand\maybeunraise@ \raise\dimen@\boxz@%
>     \noexpand\recoverXyStyle@ \egroup \noexpand\xy@end%
>     \U@c=\the\Y@max \advance\U@c-\the\Y@c%
>     \D@c=-\the\Y@min \advance\D@c\the\Y@c%
>     \L@c=-\the\X@min  \advance\L@c\the\X@c%
>     \R@c=\the\X@max  \advance\R@c-\the\X@c%
>    }\tmp@}%
\gdef\xymerge@MinMax{}%
\xydef@\twocell{\hbox\bgroup\xysave@MinMax\@twocell}%
\xydef@\uppertwocell{\hbox\bgroup\xysave@MinMax\@uppertwocell}%
\xydef@\lowertwocell{\hbox\bgroup\xysave@MinMax\@lowertwocell}%
\xydef@\compositemap{\hbox\bgroup\xysave@MinMax\@compositemap}%
\xydef@\xysave@MinMax{\xdef\xymerge@MinMax{%
   \noexpand\ifdim\X@max<\the\X@max \X@max=\the\X@max\noexpand\fi%
   \noexpand\ifdim\X@min>\the\X@min \X@min=\the\X@min\noexpand\fi%
   \noexpand\ifdim\Y@max<\the\Y@max \Y@max=\the\Y@max\noexpand\fi%
   \noexpand\ifdim\Y@min>\the\Y@min \Y@min=\the\Y@min\noexpand\fi%
  }}%
\xydef@\drop@Twocell{\boxz@ \xymerge@MinMax}%
\xydef@\twocell@DONE{%
  \edef\tmp@{\egroup%
   \X@min=\the\X@min \X@max=\the\X@max%
   \Y@min=\the\Y@min \Y@max=\the\Y@max}\tmp@%
  \L@c=\X@c \advance\L@c-\X@min \R@c=\X@max \advance\R@c-\X@c%
  \D@c=\Y@c \advance\D@c-\Y@min \U@c=\Y@max \advance\U@c-\Y@c%
  \ht\z@=\U@c \dp\z@=\D@c \dimen@=\L@c \advance\dimen@\R@c \wdz@=\dimen@%
  \computeLeftUpness@%
  \setboxz@h{\kern-\X@p \raise-\Y@c\boxz@ }%
  \dimen@=\L@c \advance\dimen@\R@c \wdz@=\dimen@ \ht\z@=\U@c \dp\z@=\D@c%
  \Edge@c={\rectangleEdge}\Invisible@false \Hidden@false%
  \edef\Drop@@{\noexpand\drop@Twocell%
   \noexpand\def\noexpand\Leftness@{\Leftness@}%
   \noexpand\def\noexpand\Upness@{\Upness@}}%
  \edef\Connect@@{\noexpand\connect@Twocell%
   \noexpand\ifdim\X@max<\the\X@max \X@max=\the\X@max\noexpand\fi%
   \noexpand\ifdim\X@min>\the\X@min \X@min=\the\X@min\noexpand\fi%
   \noexpand\ifdim\Y@max<\the\Y@max \Y@max=\the\Y@max\noexpand\fi%
   \noexpand\ifdim\Y@min>\the\Y@min \Y@min=\the\Y@min\noexpand\fi }%
  \xymerge@MinMax%
}%
\begin{document}

\title{Homology of distributive lattices}

\author[J\'ozef H. Przytycki]{J\'ozef H. Przytycki}
\thanks{JHP was partially supported by the~NSA-AMS 091111 grant,
by the~Polish Scientific Grant: Nr. N-N201387034, and by the~GWU REF grant.}
\address{Department of Mathematics, George Washington University\\ Washington, DC 20052\\
and Gda\'nsk University, Poland}
\email{przytyck@gwu.edu}

\author[Krzysztof K. Putyra]{Krzysztof K. Putyra}
\thanks{KP was supported by the NSF grant DMS-1005750 in summer 2011.\\
	\indent ${}^1$ Pawe\l\ Waszkiewicz (1973-2011) was a~faculty member of Theoretical Computer Science at
	Jagiellonian University in Krakow. He obtained PhD at the~University of Birmingham,
	UK, in 2002 in the~theory of domains and formal languages. Although his career
	has been ceased in a~tragic way nine years later, he has already published 21 papers.
	The~second author is indebted to him for being introduced
	to the~fascinating world of categories, posets and domains.
}
\address{Department of Mathematics, Columbia University\\ New York, NY 10027}
\email{putyra@math.columbia.edu}

\begin{abstract}
We outline the theory of sets with distributive operations: multishelves
and multispindles, with examples provided by semi-lattices, lattices and skew lattices.
For every such a structure we define multi-term distributive homology
and show some of its properties.
The main result is a~complete formula for the~homology of
a~finite distributive lattice. We also indicate the~answer for unital
spindles and conjecture the~general formula for semi-lattices and some skew lattices.
Then we propose a~generalization of a~lattice as a~set with a~number of
idempotent operations satisfying the~absorption law.
\end{abstract}


\dedicatory{The paper is dedicated to Pawe\l\ Waszkiewicz${}^1$}

\maketitle

\tableofcontents

\stepcounter{footnote}

\section{Introduction}\label{chpt:intro}
While homology of associative structures (e.g. groups or rings) have been studied
successfully for a~long time, homology theory of distributive structures started
to develop only recently. The~homology theory of racks (i.e. sets with a~right
self-distributive invertible binary operation) was introduced about 1990 by~Fenn,
Rourke and Sanderson \cite{Fenn,FennRourkeSand} in relation to higher dimensional
knot theory. The first full calculation of rack homology was that of prime dihedral
quandles \cite{N-P-2,Nos,Cla}. In this paper we outline the~general theory
of multishelves and distributive homology. It is a~new discipline on the~border
of algebra and topology with intention to be comparable with homological algebra
of associative structures. The main result of the~paper, the~theorem~\ref{thm:hom-all},
gives a~complete determination of the~structure of a~multiterm homology of
a~finite distributive lattice. In particular, it solves Conjecture 29 of \cite{P-S}.
 
In the~second section we introduce the concept of a~monoid $Bin(X)$ of binary
operations on $X$ and prove its basic properties related to distributivity.
We define multishelves and multispindles and discuss the~premiere example of them
coming from Boolean algebras and distributive lattices.

In the~third section we define homology of multishelves and introduce the~notion
of a~weak simplicial module, which provides a~good abstract language to discuss
this homology. In particular, we define for any weak simplicial module a~chain
complex of degenerate elements and its natural filtration.

In the~fourth section, for any multispindle we split its homology into degenerate
and normalized parts. We also show another decomposition, very useful to study
homology of distributive lattices, into homology of a~point, reduced initially
degenerate homology, and a~reduced initially normalized part. We discuss basic
properties of them.  

In the~fifth section we show that the~normalized degenerate part can be obtained
from the~early normalized part of the~homology. In the second part of the~section
we completely determine homology of a~finite distributive lattice by first computing
it for the~two element Boolean algebra $B_1$ and then proving Mayer-Vietoris type
of results allowing computing homology of any distributive lattice from 
its proper sublattices.

In the~sixth section we analyze various generalizations of distributive lattices
to which our theory applies fully or partially. In particular, we analyze skew
lattices and introduce the notion of a~generalized distributive lattice of
any number of operations. We formulate several conjectures and support them by 
empirical calculation.

\section{Distributive structures}\label{chpt:monoids}
This section is devoted to establish the~notation.
After stating definitions and properties of the~most basic distributive
structures (shelves, spindles, etc.) we provide classical
examples of such structures: distributive lattices and Boolean algebras.

\subsection{Distributive operations}

Let $X$ be a~set and $\star\colon X\times X \to X$ a~binary operation.
We call a~pair $(X,\star)$ a~\emph{magma}.
Denote by $Bin(X)$ the~set of all binary operations on $X$.
An~easy calculation shows that it has a~monoid structure.

\begin{proposition}\label{prop:Bin-monoid}
$Bin(X)$ is a~monoid (e.g. a~semigroup with a~unit) with a~composition 
given by $x \star_{\!1}\!\!\star_2\, y = (x\star_1 y)\star_2 y$ and the~two-sided unit $\lt$
being the~\projleft{} operation (that is $x\lt y = x$ for any $x,y\in X$).
\end{proposition}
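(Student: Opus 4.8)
The plan is to verify directly the two monoid axioms: associativity of the composition $\star_1\star_2$ and the claim that $\lt$ is a two-sided unit. Both reduce to short computations once the convention in the statement is fixed, namely that in $\star_1\star_2$ the operation $\star_1$ is applied first and the right argument $y$ is carried along unchanged at every stage.

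For associativity I would evaluate both $(\star_1\star_2)\star_3$ and $\star_1(\star_2\star_3)$ on an arbitrary pair $(x,y)\in X\times X$. Unwinding the definition twice gives
\[
x\bigl((\star_1\star_2)\star_3\bigr)y = \bigl(x(\star_1\star_2)y\bigr)\star_3 y = \bigl((x\star_1 y)\star_2 y\bigr)\star_3 y,
\]
and likewise
\[
x\bigl(\star_1(\star_2\star_3)\bigr)y = (x\star_1 y)(\star_2\star_3)y = \bigl((x\star_1 y)\star_2 y\bigr)\star_3 y,
\]
so the two operations agree pointwise, hence as elements of $Bin(X)$. For the unit, since $x\lt y = x$ one gets $x(\lt\star)y = (x\lt y)\star y = x\star y$ and $x(\star\lt)y = (x\star y)\lt y = x\star y$, so $\lt\star = \star = \star\lt$ for every $\star\in Bin(X)$.

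Conceptually, the only point worth isolating is \emph{why} carrying $y$ along makes the composition associative: for each fixed $y\in X$ the assignment $\rho_y(\star)\colon x\mapsto x\star y$ is a self-map of $X$, and the defining formula says precisely that $\rho_y(\star_1\star_2) = \rho_y(\star_2)\circ\rho_y(\star_1)$, while $\rho_y(\lt) = \id_X$. Since an operation on $X$ is determined by the family of its right translations $(\rho_y)_{y\in X}$, associativity and unitality in $Bin(X)$ are inherited from the monoid of self-maps of $X$ under composition. There is no genuine obstacle here; the statement is elementary, and the sole thing requiring care is keeping the order of $\star_1$ and $\star_2$ — and the left-versus-right convention of the distributive law used throughout the paper — consistent.
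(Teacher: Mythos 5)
Your proof is correct and matches what the paper intends: the paper simply asserts that ``an easy calculation'' verifies the monoid structure, and your direct check of associativity and of $\lt$ being a two-sided unit is exactly that calculation, carried out explicitly. The added observation that $\star\mapsto(\rho_y(\star))_{y\in X}$ embeds $Bin(X)$ anti-homomorphically into a product of self-map monoids is a nice conceptual gloss, but it is not a different argument.
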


Recall that an~operation $\star_1$ is called \emph{right distributive} with respect to
$\star_2$, if for all $x,y,z\in X$ it satisfies 
\begin{equation}
(x \star_2 y)\star_1 z = (x \star_1 z)\star_2(y \star_1 z).
\end{equation}
Dually we define left distributivity. An~operation $\star$ is called right (resp. left)
\emph{self-distributive}, if it is right (resp. left) distributive with respect
to itself. The~operation $\lt$ is right distributive with respect to any
other operation and vice versa. This plays later an important role.%
\footnote{
	$\lt$ and $\star$ are seldom associative,
	as $(x\lt y)\star z = x\star z$, but $x \lt (y \star z)=x$.
}
Since now by distributivity we will always mean right distributivity.

While an~associative magma is called a~semigroup for a~long time,
the~self-distributive magma did not have an~established name,
even though C.S.~Peirce considered it back in 1880 \cite{Pei}.
Alissa Crans in her PhD thesis \cite{Cr}
suggested the~name a~\emph{right shelf} (or simply a~\emph{shelf}).
Below we write a~formal definition of a~shelf and related notions
of a~spindle, a~rack, and a~quandle.

\begin{definition}\label{def:shelf}
A~magma $(X,\star)$ is called a~\emph{shelf} if $\star$ is self-distributive.
Moreover, if $\star$ is idempotent (i.e.~$x\star x = x$ for any $x\in X$),
then $(X,\star)$ is called a~\emph{spindle} (again a~term coined by Crans).
\end{definition}

\begin{remark}
Early examples of shelves in topology date to J.H.~Conway and D.~Joyce.
In 1959 Conway coined a~name \emph{wrack}, modified later to \emph{rack} \cite{F-R},
for a~shelf with an~invertible product (i.e. $\star_b(x) = x\star b$ is a~bijection
for any $b\in X$). Later Joyce in his PhD thesis \cite{Joy} in 1979 introduced a~\emph{quandle}
as a~rack with an~idempotent product. Axioms of a~quandle were motivated by
the~Reidemeister moves: idempotency by the~first move, invertibility by the~second
and right self-distributivity by the~third move.
\end{remark}

The~above definition describes properties of an~individual magma $(X,\star)$.
It is also useful to consider subsets or submonoids of $Bin(X)$ satisfying related
conditions as described below.

\begin{definition}\label{def:monoid-in-Bin}
A~subset $\SetS\subset Bin(X)$ is called \emph{distributive} if all pairs
$\star_{\alpha},\star_{\beta} \in \SetS$ are mutually right distributive. In particular,
taking $\star_\alpha=\star_\beta$, all operations must be self-distributive.
If in addition $\SetS$ is a~submonoid of $Bin(X)$, we call it a~\emph{distributive submonoid}.
\end{definition}

Any set $\SetS$ generates a~submonoid $M(\SetS)$ in $Bin(X)$.
It is easy to check that if $\SetS$ is a~distributive set,
then $M(\SetS)$ is a~distributive submonoid (see \cite{Prz-Demonstratio}).

\begin{definition}\label{def:multishelf}
A~pair $(X,\{\star_\lambda\}_{\lambda\in\Lambda})$ is called a~\emph{multishelf},
if operations $\star_\lambda$ form a~distributive set in $Bin(X)$.
Furthermore, if each $\star_\lambda$ satisfies the~idempotency condition, we call
$(X,\{\star_\lambda\}_{\lambda\in\Lambda})$ a~\emph{multispindle}.
\end{definition}

\begin{remark}\label{rmk:adding-trivials}
If $(X,\SetS)$ is a~multishelf, then $S\cup\{\lt\}$ is also a~multishelf.
In addition, if $(X,\SetS)$ is a~multispindle, we can enlarge $\SetS$
by the~\projright{} operation~$\rt$ (i.e. $x\rt y = y$).
Furthermore, if $\SetS$ is a monoid and it consists of idempotents,
then it remains a~monoid after enlarging by $\rt$, since it is a~two-sided
projector  (i.e. $\star\!\rt=\rt=\rt\!\star$).
\end{remark}

Finally, we give definitions of substructures and homomorphisms.
All of them are very natural.

\begin{definition}\label{def:substr}
Let $(X,\{\star_\lambda\}_{\lambda\in\Lambda})$ be a~multishelf (resp. multispindle).
A~subset $Y\subset X$ is called a~\emph{submultishelf} (resp. \emph{submultispindle})
if it is closed under all operations $\star_\lambda$.
\end{definition}

\begin{definition}\label{def:shelf-homom}
Let $(X,\{\star_\lambda\}_{\lambda\in\Lambda})$ and $(Y,\{*_\lambda\}_{\lambda\in\Lambda})$
be multishelves (resp. multispindles) with operations indexed with the~same set.
A~map of sets $\varphi\colon X\to Y$ is called a~\emph{multishelf homomorphism}
(resp. a~\emph{multispindle homomorphism}) if it preserves all operations:
$\varphi(x\star_\lambda y) = \varphi(x)*_\lambda\varphi(y)$ for every $\lambda\in\Lambda$.
\end{definition}

The~basic example of a~shelf homomorphism is given by an~action of a~fixed
element $a\in X$. Namely, the~map $\star_\lambda^a(x) := x\star_\lambda a$
is a~shelf homomorphism due to distributivity. More generally, we can put
$$
\star^{a_1,\dots,a_s}_{\lambda_1,\dots,\lambda_s}(x) := ((x\star_{\lambda_1}a_1)\star_{\lambda_2}\cdots)\star_{\lambda_s} a_s
$$
for the~composition of $\star_{\lambda_i}^{a_i}$.
Such homomorphisms form a~monoid%
\footnote{
	By a~convention, the~identity map is given by empty sequences:
	$\id = \star^{\emptyset}_{\emptyset}$.
}
and are called \emph{inner endomorphisms} of a~multishelf $X$.
Similar definitions can be stated for multiracks and multiquandles,
but we omit them since these notions are not used in the~paper.

\subsection{Lattices and Boolean algebras}
Natural examples of multispindles are provided by distributive
lattices and Boolean algebras. Because these examples are very important for this paper,
we include here a~brief introduction to the~theory of lattices.
For a~more detailed course the~reader is referred to~\cite{Gratzer,Sikorski,Traczyk}.

\begin{definition}\label{def:semilattice}
A~magma $(L,\star)$ is called a~\emph{semilattice} if $\star$
is idempotent, commutative and associative.
\end{definition}

\noindent It is didactic to see that the~conditions above imply self-distributivity of $\star$.%
\footnote{
	The~reader should not confuse this with a~notion of a~distributive semilattice,
	which is much stronger (see \cite{Gratzer}, p.~117).
}
The~short proof is given below:
\begin{align*}
(x \star y)\star z &= (x \star y)\star(z \star z) = (x\star(y\star(z\star z))) =\\
	&= (x\star(z\star(y\star z))) = (x\star z)\star(y\star z).
\end{align*}

Any~semilattice is a~partially ordered set (a~\emph{poset}) with the~order
defined as
\begin{equation}
x\leqslant y \quad\equiv\quad x\star y = y.
\end{equation}
Dually, any~poset with unique binary maxima is a~semilattice
with $\star$ given by the~maximum. This allows us to represent semi-lattices
pictorially by \emph{Hasse diagrams} as in fig.~\ref{fig:semi-lattice}.

\begin{figure}[tb]
	\begin{center}
		\hfill\hfill
\begin{pspicture}(4,2)
\cnode*(0,0){2pt}{b1}
\cnode*(2,0){2pt}{b2}
\cnode*(4,0){2pt}{b3}
\cnode*(1,1){2pt}{m1}
\cnode*(2,1){2pt}{m2}
\cnode*(3,1){2pt}{m3}
\cnode*(2,2){2pt}{t}
\ncline[nodesep=2pt]{b1}{m1}
\ncline[nodesep=2pt]{b2}{m1}
\ncline[nodesep=2pt]{b2}{m2}
\ncline[nodesep=2pt]{b2}{m3}
\ncline[nodesep=2pt]{b3}{m3}
\ncline[nodesep=2pt]{m1}{t}
\ncline[nodesep=2pt]{m2}{t}
\ncline[nodesep=2pt]{m3}{t}
\end{pspicture}
\hfill
\begin{pspicture}(3,2)
\cnode*(0.4,0){2pt}{b1}
\cnode*(2,0){2pt}{b2}
\cnode*(1.2,1){2pt}{l1}
\cnode*(2.8,0.6){2pt}{r1}
\cnode*(2.8,1.5){2pt}{r2}
\cnode*(2,2){2pt}{t}
\ncline[nodesep=2pt]{b1}{l1}
\ncline[nodesep=2pt]{b2}{l1}
\ncline[nodesep=2pt]{b2}{r1}
\ncline[nodesep=2pt]{r1}{r2}
\ncline[nodesep=2pt]{l1}{t}
\ncline[nodesep=2pt]{r2}{t}
\end{pspicture}
\hfill
\begin{pspicture}(2,2)
\cnode*(0.2,0){2pt}{b1}
\cnode*(1.8,0){2pt}{b2}
\cnode*(1,1){2pt}{m}
\cnode*(1,2){2pt}{t}
\ncline[nodesep=2pt]{b1}{m}
\ncline[nodesep=2pt]{b2}{m}
\ncline[nodesep=2pt]{m}{t}
\end{pspicture}
\hfill\hfill\ 
	\end{center}
	\caption{Typical Hasse diagrams of semi-lattices. The~right one is a~rooted tree.}
	\label{fig:semi-lattice}
\end{figure}
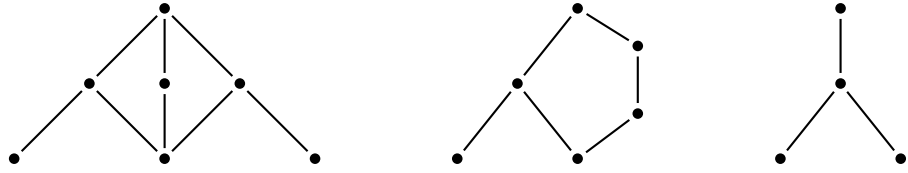

A~typical example of a~semi-lattice is given by a~rooted tree $T$.
Having two vertices $x, y\in T$, we define $x\star y\in T$ as the~first common
point of the~unique paths starting at $x$ or $y$ and ending at the~root of $T$.

Having a~poset, we can ask not only for maxima, but also for minima
and relations those two operations satisfy. If both binary maxima and minima exist
and are unique, the~poset is called a~\emph{lattice}. Below we present an~axiomatic
definition.

\begin{definition}\label{def:lattice}
A~triple $(L,\lor,\land)$ is called a~\emph{lattice},
if both operations are idempotent, commutative, associative
and the~following absorption laws hold:
\begin{equation}\label{eq:absorption}
(x\lor y)\land y = y = (x\land y)\lor y.
\end{equation}
\end{definition}

\noindent The~operation $\lor$ is called a~\emph{join} or \emph{supremum},
whereas $\land$ is called a~\emph{meet} or \emph{infimum}.
Both are self-distributive, but they may not be mutually distributive.
If this is the~case, the~lattice $L$ is called \emph{distributive}.

As in the~case of semilattices, every lattice is a~poset with an~order
defined as
\begin{equation}
x\leqslant y \quad\equiv\quad x\land y = x \quad\equiv\quad x\lor y = y.
\end{equation}
If the~order is linear, a~lattice $L$ is called a~\emph{chain}.

\begin{figure}[b]
	\begin{center}
		\hfill\hfill
\begin{pspicture}(2,3)
\cnode*(1,0){2pt}{b}
\cnode*(0,1){2pt}{m11}
\cnode*(1,1){2pt}{m12}
\cnode*(2,1){2pt}{m13}
\cnode*(0,2){2pt}{m21}
\cnode*(1,2){2pt}{m22}
\cnode*(2,2){2pt}{m23}
\cnode*(1,3){2pt}{t}
\ncline[nodesep=2pt]{b}{m11}
\ncline[nodesep=2pt]{b}{m12}
\ncline[nodesep=2pt]{b}{m13}
\ncline[nodesep=2pt]{m11}{m21}
\ncline[nodesep=2pt]{m11}{m22}
\ncline[nodesep=2pt]{m12}{m21}
\ncline[nodesep=2pt]{m12}{m23}
\ncline[nodesep=2pt]{m13}{m22}
\ncline[nodesep=2pt]{m13}{m23}
\ncline[nodesep=2pt]{m21}{t}
\ncline[nodesep=2pt]{m22}{t}
\ncline[nodesep=2pt]{m23}{t}
\end{pspicture}
\hfill
\begin{pspicture}(1,3)
\cnode*(0.5,0){2pt}{b}
\cnode*(0.5,1){2pt}{m1}
\cnode*(0.5,2){2pt}{m2}
\cnode*(0.5,3){2pt}{t}
\ncline[nodesep=2pt]{b}{m1}
\ncline[nodesep=2pt]{m1}{m2}
\ncline[nodesep=2pt]{m2}{t}
\end{pspicture}
\hfill
\begin{pspicture}(3,3)
\cnode*(1.5 ,0   ){2pt}{b}
\cnode*(0.75,0.75){2pt}{m11}
\cnode*(2.25,0.75){2pt}{m12}
\cnode*(0   ,1.5 ){2pt}{m21}
\cnode*(1.5 ,1.5 ){2pt}{m22}
\cnode*(3   ,1.5 ){2pt}{m23}
\cnode*(0.75,2.25){2pt}{m31}
\cnode*(2.25,2.25){2pt}{m32}
\cnode*(1.5 ,3   ){2pt}{t}
\ncline[nodesep=2pt]{b}{m11}
\ncline[nodesep=2pt]{b}{m12}
\ncline[nodesep=2pt]{m11}{m21}
\ncline[nodesep=2pt]{m11}{m22}
\ncline[nodesep=2pt]{m12}{m22}
\ncline[nodesep=2pt]{m12}{m23}
\ncline[nodesep=2pt]{m21}{m31}
\ncline[nodesep=2pt]{m22}{m31}
\ncline[nodesep=2pt]{m22}{m32}
\ncline[nodesep=2pt]{m23}{m32}
\ncline[nodesep=2pt]{m31}{t}
\ncline[nodesep=2pt]{m32}{t}
\end{pspicture}
\hfill
\begin{pspicture}(2,3)
\cnode*(1.2,0){2pt}{b}
\cnode*(0,1.5){2pt}{l1}
\cnode*(2,0.9){2pt}{r1}
\cnode*(2,2.1){2pt}{r2}
\cnode*(1.2,3){2pt}{t}
\ncline[nodesep=2pt]{b}{l1}
\ncline[nodesep=2pt]{b}{r1}
\ncline[nodesep=2pt]{r1}{r2}
\ncline[nodesep=2pt]{l1}{t}
\ncline[nodesep=2pt]{r2}{t}
\end{pspicture}
\hfill\hfill\ 
	\end{center}
	\caption{Examples of lattices: a~Boolean algebra $B_3$, a~chain lattice $L_4$,
				a~distributive lattice $L_3\times L_3$ and a~non-distributive
				lattice $N_5$.}
	\label{fig:lattice}
\end{figure}
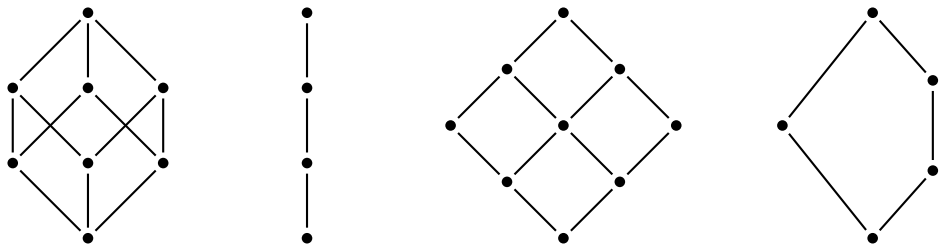

Join and meet operations provide existence of infima and suprema of finite
subsets. However, general infimima and suprema might not exist. If they do,
the~lattice $L$ is called \emph{complete}. The~two special elements
$\bot = \inf L$ and $\top = \sup L$ are called \emph{bottom} and \emph{top}.
Clearly, any~finite lattice is complete.

A~special case of a~lattice is a~\emph{Boolean algebra}.
Although an~axiomatic definition exists, for our purpose it is enough
to know that all finite Boolean algebras are modeled as families of all subsets
of a~given finite set $A$. We write $B_n:=\mathcal{P}(A)$ for
a~Boolean algebra modeled on a~set $A$ with $n$ elements.

Similarly as for multishelves, we can define a~sublattice.
It is worth to mention that if a~lattice $L$ is distributive,
then any sublattice $L'\subset L$ is distributive as well.
Two kinds of sublattices are distinguished due to their special properties.

\begin{definition}\label{def:ideal-filter}
Let $L$ be a~lattice.
\begin{enumerate}[label={\normalfont(\roman{*})}]
\item An~\emph{ideal} $I\subset L$ is a~sublattice such that $I\land L\subset I$
\item A~\emph{filter} $F\subset L$ is a~sublattice such that $F\lor L\subset F$
\item An~ideal (filter) is called \emph{principal} if it is generated by one element,
		i.e. $I=L\land x$ (resp. $F = L\lor x$). In this case we write
		$I = \downarrow\!\! x$ (resp. $F = \uparrow\!\! x$).
\end{enumerate}
\end{definition}

\begin{remark}
If a~lattice $L$ is finite, then all ideals and filters are principal.
Namely, for any ideal $I$ and filter $F$ one has $I = \downarrow\!\max\!I$
and $F=\uparrow\!\min\!F$.
\end{remark}

In the~order-theoretic approach, an~ideal is a~subset closed under suprema
and containing with any $a\in I$ all elements smaller than $a$.
Dually, a~filter is a~subset closed under infima and containing with any $a\in F$
all elements greater than $a$.

The~last notion we need is irreducibility.
An~element $x\in L$ is called \emph{join-irreducible} (or simply \emph{irreducible})
if for any decomposition $x = a\lor b$ we have either $x=a$ or $x=b$.
Clearly, the~bottom element $\bot$, if it exists, is irreducible.
The~set of all non-minimal irreducible elements in $L$ is denoted by $J(L)$,
with the~letter $J$ reserved for the~cardinality of this set.
Dually we define \emph{meet-irreducible} elements.
The~following theorem combines irreducible elements with the~size of a~lattice
(see~\cite{Gratzer}, cor. 7.14).

\begin{theorem}\label{thm:irred-chain}
Let $L$ be a~finite distributive lattice and $\mathcal{C}\subset L$ a~maximal chain in $L$
of length $n = |\mathcal{C}|-1$. Then $|J(L)| = n$.
\end{theorem}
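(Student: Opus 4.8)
The plan is to prove that in a finite distributive lattice $L$, every maximal chain has the same length, namely $|J(L)|$, by induction on the size of $L$. For the base case, when $L$ is a single point, both $J(L)$ and the length of the maximal chain are $0$. For the inductive step, I would pick the top element $\top = \sup L$ and the (unique) meet-irreducible element $m$ covered by $\top$ along a given maximal chain $\mathcal{C}$; more cleanly, I would instead use join-irreducibles from the bottom: let $\mathcal{C}: \bot = c_0 < c_1 < \cdots < c_n = \top$ be a maximal chain, so each $c_{i}$ covers $c_{i-1}$. The key step is to produce a bijection (or at least an equality of cardinalities) between $J(L)$ and the set of "jumps" of $\mathcal{C}$, i.e. to show $n = |J(L)|$.

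The main tool is Birkhoff's representation: a finite distributive lattice $L$ is isomorphic to the lattice $\mathcal{O}(J(L))$ of down-sets (order ideals) of the poset $J(L)$ of join-irreducibles, via $x \mapsto \{\, p \in J(L) : p \leqslant x \,\}$. Under this isomorphism, a covering relation $y \lessdot z$ corresponds to adding a single element to the down-set, so any maximal chain in $\mathcal{O}(J(L))$ from $\emptyset$ to all of $J(L)$ has exactly $|J(L)|$ covering steps — one builds up the down-set one element at a time along a linear extension of $J(L)$. Hence every maximal chain in $L$ has length $|J(L)|$. Since the statement references \cite{Gratzer}, cor. 7.14, I would cite Birkhoff's theorem from there rather than reprove it; alternatively one can give a self-contained inductive argument as follows.

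For the self-contained induction: given a maximal chain $\mathcal{C}$ in $L$ of length $n$, consider the coatom $m = c_{n-1}$, which is meet-irreducible (being covered by the unique top). The principal ideal $\downarrow\!m$ is a distributive sublattice, strictly smaller than $L$, and $\mathcal{C} \setminus \{\top\}$ is a maximal chain in it of length $n-1$; by induction $|J(\downarrow\!m)| = n-1$. The heart of the argument is then to check that $|J(L)| = |J(\downarrow\!m)| + 1$: every join-irreducible of $L$ other than those below $m$ must lie in $\uparrow$ of... — more precisely, one shows there is exactly one join-irreducible $p$ of $L$ with $p \not\leqslant m$ (it is the smallest element $x$ with $x \lor m = \top$, which exists and is unique by distributivity), and that the join-irreducibles $\leqslant m$ are precisely $J(\downarrow\!m)$. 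Combining, $n = |J(L)|$, and since $n$ was the length of an arbitrary maximal chain, all maximal chains have this common length.

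The hard part will be the bookkeeping in the inductive step: verifying that exactly one new join-irreducible appears when passing from $\downarrow\!m$ to $L$, and that no join-irreducible of $L$ is "lost" (i.e. every join-irreducible of $L$ below $m$ is still join-irreducible in $\downarrow\!m$, which is automatic, and conversely). This is where distributivity is essential — in $N_5$ the conclusion fails — so the uniqueness of the complement-like element $p$ and the exchange argument showing different maximal chains have the same length both rely crucially on the mutual distributivity of $\lor$ and $\land$. I expect the cleanest writeup simply invokes Birkhoff duality to sidestep this bookkeeping entirely.
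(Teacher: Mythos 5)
The paper does not actually prove this statement: Theorem~\ref{thm:irred-chain} is quoted as a classical fact with a pointer to Gr\"atzer (cor.~7.14), whose standard proof is precisely the counting of join-irreducibles via the Birkhoff representation that you invoke first. So your primary route coincides with the paper's (i.e.\ with the cited source's) treatment, and your fallback self-contained induction is also sound: the key step --- that for the coatom $m=c_{n-1}$ there is exactly one join-irreducible $p$ with $p\not\leqslant m$ --- does follow from distributivity (if $p,q\not\leqslant m$ are join-irreducible then $p\lor m=q\lor m=\top$, so $p=(p\land q)\lor(p\land m)$ forces $p\leqslant q$ and symmetrically $q\leqslant p$; existence comes from taking a minimal element of $\{x:x\not\leqslant m\}$), and join-irreducibility in the down-set $\downarrow\!m$ agrees with join-irreducibility in $L$. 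One cosmetic point: the meet-irreducibility of $m$ plays no role in your argument (only the covering relation $m\lessdot\top$ does), and you should check, as the paper's definition of $J(L)$ requires, that the excluded \emph{minimal} irreducible is the same for $L$ and for $\downarrow\!m$ --- which it is, since both have bottom $\bot$. No genuine gap.
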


Although a~lattice $L$ is defined as a~set with two operations,
it has four operations as a~multispindle due to remark~\ref{rmk:adding-trivials}.
Furthermore, the~absorption law (\ref{eq:absorption})
provides the~two compositions are equal to the~\projright{} operation
$x \rt y = y$. It is worth to mention that the~set $\{\lt,\lor,\land,\rt\}$
is a~distributive submonoid of $Bin(L)$.
Similarly, a~semilattice $(X,\star)$ as a~multispindle has three operations
$\{\lt,\star,\rt\}$ which form a~distributive submonoid of $Bin(X)$.

\section{Multi-term chain complexes}\label{chpt:complex}
For any distributive structure we can create a~chain complex
and compute its homology. The~construction is given below,
followed by a~short introduction to the~theory of weak simplicial
modules. We decided to include this theory, because it explains
why some of the~decompositions described in section~\ref{chpt:decomp} exist.

\subsection{Homology of distributive structures}

Let $(X,\star)$ be a~shelf and $R$ any commutative ring.
We define a~(one-term) distributive chain complex
$C^\star(X;R)$ as follows:
\begin{align*}
	C^\star_n(X;R)&:=R X^{n+1} = R\langle (x_0,\dots,x_n) | x_i\in X\rangle, \\
	\partial_n^{\star}&:= \sum_{i=0}^n (-1)^i d_i^{\star},
\end{align*}
where maps $d_i^\star$ are given by the~formula
\begin{align*}
	d_0^\star(x_0,...,x_n) &= (x_1,...,x_n) \\
	d_i^\star(x_0,...,x_n) &= (x_0\star x_i,...,x_{i-1}\star x_i,x_{i+1},...,x_n)
\end{align*}
We check that $\partial^{\star}\partial^{\star}=0$. The~homology of this chain complex is called 
a~\emph{one-term distributive homology} of $(X,\star)$ with coefficients in $R$
and is denoted by $H^\star(X;R)$.
We can put $C_{-1}=R$ and $\partial_0(x)=1$ to obtain \emph{augmented} distributive
chain complex and augmented distributive homology $\tilde H^\star(X;R)$.
As in the~classical case we get:
$$
H_n^\star(X;R)=\begin{cases}
	R\oplus \tilde H^\star_n(X;R),  & n = 0, \\
	\tilde H^\star_n(X;R),          & n > 0.
\end{cases}
$$

The~first homology theory related to self-distributive structures was constructed in 1990 by 
Fenn, Rourke and Sanderson \cite{Fenn,FennRourkeSand} and motivated by higher dimensional
knot theory.  For a~rack $(X,\star)$ they defined rack homology $H_n^{\R}(X)$ by taking a~chain
complex $C^{\R}_n(X)=\Z X^n$ with $\partial^{\R}_n\colon C^{\R}_n(X)\to C^{\R}_{n-1}(X)$ given by
$\partial_n^{\R} = \partial_{n-1}^\star-\partial_{n-1}^{\lt}$.%
\footnote{
	Our notation has grading shifted by 1, that is $C_n(X) = C^{\R}_{n+1}(X) = \Z X^{n+1}$.
}
This is a~routine check that $\partial^{\R}_{n-1}\partial_n^{\R}=0$. It is however an~interesting
question what properties of $\lt$ and $\star$ are really used. With relation to the~paper
\cite{NP-HomOper} we noticed that it is distributivity which makes
$\left(C^{\R}(X),\partial^{\R}\right)$ a~chain complex.
Furthermore, we observed that if $\star_1$ and $\star_2$ are self-distributive and distributive
one with respect to the~other then $\partial^{(a_1,a_2)}= a_1\partial^{\star_1}+a_2\partial^{\star_2}$
is a~differential for any scalars $a_1,a_2\in R$.
In a~full generality, we can take any distributive set $\{\star_1,\dots,\star_k\}$ and scalars
$a_1,\dots,a_k\in R$ to build a~chain complex $\left(C(X;R),\partial^{(a_1,\dots,a_k)}\right)$ with
\begin{equation}
	\partial^{(a_1,...,a_k)}=a_1\partial^{\star_1}+\dots+a_k\partial^{\star_k}.
\end{equation}
\noindent
It is called a~\emph{multi-term chain complex} for a~multishelf $X$
(see \cite{Prz-Demonstratio,P-S}).

As in classical theory, any morphism $f\colon X\to Y$ of multishelves
induces a~chain map of complexes $f_\sharp\colon C(X;R)\to C(Y;R)$,
which further descends to a~map between homology groups $f_*\colon H(X;R)\to H(Y;R)$.
The~basic examples are provided by an~inclusion of a~submultishelf or
a~projection $f(x)=x\star_r t$ of $X$ onto the~right orbit $Y=X\star_r t$
of $t$ with respect to of the~operations.

\subsection{Weak simplicial modules}

It is convenient to have terminology, usefulness of which will be visible in next sections and
which takes into account the~fact that in most homology theories a~boundary operation
$\partial_n\colon C_n \to C_{n-1}$ decomposes as an~alternating sum of \emph{face maps}
$d_i\colon C_n \to C_{n-1}$. Often we also have \emph{degeneracy maps} $s_j\colon C_n \to C_{n+1}$.
This section is motivated by~\cite{Lod}.

\begin{definition}\label{def:simpl-mod}
Let $R$ be a~ring. A \emph{simplicial module} $(C_n,d_i,s_j)$ is a~collection of
$R$-modules $C_n$ for $n\geq 0$, together with \emph{face maps} $d_i\colon C_n\to C_{n-1}$
and \emph{degeneracy maps} $s_j\colon C_n\to C_{n+1}$, $0\leq i,j \leq n$,
which satisfy the following properties:
\begin{enumerate}[label={\normalfont (SM\arabic{*})},ref=(SM\arabic{*})]
\itemindent 2em
\item\label{SM-dd} $d_id_j = d_{j-1}d_i$, for $i<j$
\item\label{SM-ss} $s_is_j = s_{j+1}s_i$, if $0\leq i \leq j \leq n$
\item\label{SM-ds} $d_is_j = \begin{cases}
							s_{j-1}d_i   & \textrm{ if }i<j \\
							s_{j}d_{i-1} &\textrm{ if }i>j+1
					 \end{cases}$
\item\label{SM-ds-id} $d_is_i=d_{i+1}s_i= \id_{C_n}$
\end{enumerate}

\noindent A~\emph{weak simplicial module} $(C_n,d_i,s_j)$ satisfies conditions
\ref{SM-dd}--\ref{SM-ds} and the~following weakened version of \ref{SM-ds-id}:
\begin{enumerate}[label={\normalfont (W4)},ref=(W4)]
\itemindent 2em
\item\label{WSM} $d_is_i=d_{i+1}s_i$
\end{enumerate}
\end{definition}

\noindent The~definition of a~weak simplicial module is new and motivated by
homology of distributive structures. We use the term \emph{weak} as the terms
\emph{pseudo-} and \emph{almost simplicial} modules are already in use.%
\footnote{
	According to \cite{Fra}, a~pseudo-simplicial module $(C_n,d_i,s_j)$ satisfies only
	conditions \ref{SM-dd}, \ref{SM-ds} and \ref{SM-ds-id} (see \cite{Ti-Vo,In})
	whereas an~\emph{almost simplicial module} satisfies \ref{SM-dd}--\ref{SM-ds-id}
	except $s_is_i = s_{i+1}s_i$.
}

For any~(weak) simplicial module $(C_n,d_i,s_j)$ there is a~natural chain complex
$(C,\partial)$ with the~differential being an~alternating sum of face maps
$\partial_n = \sum_{i=0}^n(-1)^i d_i$. Hence, only face maps are necessary
and the~collection $(C_n,d_i)$ satisfying \ref{SM-dd} is called a~\emph{presimplicial module}.
In particular, a~chain complex for a~multishelf is a~presimplicial module,
and for a~multispindle even more can be said.

\begin{proposition}\label{prop:multi-term-is-weak-simpl}
Let $(X,\{\star_1,\dots,\star_k\})$ be a~multispindle. For scalars $a_1,\dots,a_k\in R$
put $C_n:=R X^{n+1}$ and
\begin{align*}
d_i(x_0,\dots,x_n) &:= \sum_{r=1}^k a_r d_i^{\star_r}(x_0,\dots,x_n)\\
s_j(x_0,\dots,x_n) &:= (x_0,\dots,x_{j-1},x_j,x_j,x_{j+1}\dots,x_n)
\end{align*}
Then the~collection $(C_n,d_i,s_j)$ is a~weak simplicial module.
\end{proposition}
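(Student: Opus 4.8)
The plan is to verify the four axioms \ref{SM-dd}, \ref{SM-ss}, \ref{SM-ds}, and \ref{WSM} directly, checking each on a basis element $(x_0,\dots,x_n)$. The degeneracy maps $s_j$ are the usual ones (repeat the $j$-th coordinate), independent of the multispindle structure, so \ref{SM-ss} is the classical computation and requires nothing about the operations $\star_r$. The content of the proposition lies in the faces $d_i$, which mix the $k$ self-distributive operations with scalar weights.

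The key reduction is that $d_i$ is $R$-linear in the family $(d_i^{\star_r})$, so every identity involving only the $d_i$'s will follow once we establish the corresponding identity for each \emph{pair} $(d_i^{\star_r}, d_j^{\star_s})$ of single operations. Concretely, for \ref{SM-dd} I would first show $d_i^{\star_r} d_j^{\star_s} = d_{j-1}^{\star_s} d_i^{\star_r}$ for $i<j$; summing over $r$ and $s$ with the weights $a_r a_s$ then gives $d_i d_j = d_{j-1} d_i$. This pairwise identity is exactly where mutual right distributivity of $\star_r$ and $\star_s$ is used: applying $d_j^{\star_s}$ first replaces entries $x_\ell$ by $x_\ell \star_s x_j$ for $\ell < j$, then $d_i^{\star_r}$ further acts by $\star_r x_i$ on entries before position $i$, and the distributive law $(a\star_s b)\star_r c = (a\star_r c)\star_s(b\star_r c)$ is what lets these two substitutions be reordered to match the right-hand side; one must bookkeep the index shift carefully and treat the boundary cases $i=0$ and $j=i+1$ separately, but these are routine. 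For \ref{SM-ds}, I would again check it operation-by-operation: $d_i^{\star_r} s_j$ against $s_{j-1} d_i^{\star_r}$ when $i<j$, and against $s_j d_{i-1}^{\star_r}$ when $i>j+1$; here no distributivity is needed, only the combinatorics of where the repeated coordinate sits relative to the positions being modified, and idempotency enters nowhere in this range.

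The interesting axiom is the weakened identity \ref{WSM}: $d_i s_i = d_{i+1} s_i$. After applying $s_i$ the tuple has a repeated entry $x_i$ in positions $i$ and $i+1$. Applying $d_i^{\star_r}$ merges using $\star_r$ with the entry now at position $i$ (which is $x_i$), while $d_{i+1}^{\star_r}$ merges using $\star_r$ with the entry at position $i+1$ (also $x_i$); in both cases the entries $x_0,\dots,x_{i-1}$ get hit by $\star_r x_i$, and in both cases one copy of $x_i$ survives and the other is deleted, so $d_i^{\star_r} s_i = d_{i+1}^{\star_r} s_i$ on the nose — this is where \emph{idempotency} of each $\star_r$ is invisibly but essentially present (it guarantees $x_i \star_r x_i = x_i$ so the surviving entry is literally $x_i$ in both computations, though in fact even without idempotency the two sides agree here since the merged-with element is $x_i$ in both). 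Taking the weighted sum over $r$ gives $d_i s_i = d_{i+1} s_i$. I expect the main obstacle to be purely notational: keeping the index shifts and the several boundary cases ($i=0$, $j=i+1$, $i=n$, and whether a modified position lies before, at, or after the duplicated coordinate) straight across all four axioms. There is no conceptual difficulty once the linearity-in-$r$ reduction is in place; the proof is a careful but mechanical case analysis.
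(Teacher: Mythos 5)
Your overall strategy --- reduce each axiom to the corresponding identity for a single pair of operations $(d_i^{\star_r},d_j^{\star_s})$ by bilinearity, verify \ref{SM-dd} from mutual right distributivity, and observe that \ref{SM-ss} and \ref{SM-ds} are purely combinatorial --- is exactly the direct calculation the paper has in mind (it leaves the verification as an exercise), and it is correct.

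One aside in your treatment of \ref{WSM} is false, however, and worth correcting because it mislocates the role of the spindle axiom. You claim that ``even without idempotency the two sides agree here since the merged-with element is $x_i$ in both.'' They do not. Writing $s_i(x_0,\dots,x_n)=(y_0,\dots,y_{n+1})$ with $y_i=y_{i+1}=x_i$, the map $d_i^{\star_r}$ acts by $\star_r\,y_i$ only on the entries in positions $0,\dots,i-1$ and deletes position $i$, so the surviving copy of $x_i$ (in position $i+1$) is untouched and the $i$-th entry of $d_i^{\star_r}s_i(x)$ is $x_i$; whereas $d_{i+1}^{\star_r}$ acts on positions $0,\dots,i$, so the surviving copy is hit and the $i$-th entry of $d_{i+1}^{\star_r}s_i(x)$ is $x_i\star_r x_i$. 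The two sides of \ref{WSM} therefore agree precisely because $x_i\star_r x_i=x_i$: idempotency is genuinely needed, not merely ``invisibly present,'' and this is why the proposition is stated for multispindles rather than multishelves (for a general multishelf $(C_n,d_i)$ is only a presimplicial module). Since your main text does invoke idempotency correctly, the proof stands once the parenthetical is deleted.
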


\noindent The~proof is given by a~direct calculation and is left as an~easy exercise.

\subsection{Subcomplex of degenerate elements}\label{sec:deg-subcompl}

For a~weak simplicial module $(C_n,d_i,s_j)$ we define the~\emph{degenerate submodule}
$C^D$ as a~submodule generated by images of degeneracy maps:
\begin{equation}
	C^D_n := \mathrm{span}\left\{ s_0(C_{n-1}),\dots,s_{n-1}(C_{n-1})\right\}.
\end{equation}
Conditions \ref{SM-ds} and \ref{WSM} are enough to provide that it is a~subcomplex
of $(C,\partial)$:
\begin{align*}
\partial_n s_p &= \Big(\sum_{i=0}^n (-1)^i d_i\Big)s_p = \sum_{i=0}^n(-1)^i(d_is_p) = \\
&= \sum_{i=0}^{p-1}(-1)^i(s_{p-1}d_i) + \sum_{i=p+2}^n(-1)^i(s_pd_{i-1})
\end{align*}
because $(-1)^p d_ps_p + (-1)^{p+1} d_{p+1}s_p = 0$.

It is a~classical result that if $(C_n,d_i,s_j)$ is a~simplicial module
then $C_n^D$ is acyclic. However, this does not hold for a~weak simplicial module
and we can have nontrivial degenerate homology $H^D(C)=H(C^D)$,
so that normalized homology $H^N(C)=H(C/C^D)$ is different from $H(C)$.
This plays an~important role in the theory of distributive homology.

In general, there is a~filtration on the~degenerate subcomplex
$\mathcal{F}^0\subset \mathcal{F}^1\subset ...\subset C^D$ defined as
\begin{equation}
	\mathcal{F}^p_{\!n} := \mathrm{span}\left\{ s_0(C_{n-1}),...,s_{p}(C_{n-1}) \right\}.
\end{equation}
The~fact that $\partial (\mathcal{F}^p) \subset \mathcal{F}^p$ follows from
the~relations between faces and degeneracies (again \ref{WSM} is enough).
In fact, the~calculation performed before shows that
$\partial(s_p(C_n))\subset \mathcal{F}^p_{\!n}$.

\section{Decompositions of a~chain complex}\label{chpt:decomp}
We will show here how to decompose a~multi-term distributive chain complex for
a~multispindle into simpler pieces. Although the~results can be stated for any
coefficients, for simplicity we will restrict ourselves to the~ring of integers $\Z$.
From the point of view of computing homology
of distributive lattices, very important for us is the~decomposition of a~chain
complex $C(X)$ into a~chain complex of a~point $C(t)$, a~reduced (or pointed)
initially degenerate complex $\F(X,t)$ and an~initially normalized complex
$\CF(X,t) := C(X,t)/\!\F(X,t)$.
At the~end of the~section we show how to split $C(X)$ and $C(X,t)$
into degenerate and normalized parts.

\subsection{Submultishelves and subcomplexes}

Let $(X,\{\star_1,...,\star_k\})$ be a~multishelf. Recall that $A\subset X$ is
a~submultishelf if it is closed under all operations $\star_r$. In particular,
the~one element subset $\{t\}\subset X$ is a~submultishelf if and only if $t$
is an~idempotent for each operation (i.e. $t\star_r t=t$ for every $r$).
A~submultishelf $A\subset X$ induces a~short exact sequence of chain complexes
\begin{equation}
\label{eq:seq-subcomplex} 0 \to C(A) \to C(X) \to C(X,A) \to 0
\end{equation}
where $C_n(X,A) := C_n(X)/C_n(A)$. All groups are free, so it splits in every degree $n$.
In a~few interesting to us situations, it splits also as a~sequence of chain complexes.

\begin{proposition}\label{prop:compl-split}
Let $X$ be a~multishelf and $t\in X$.
\begin{enumerate}[label={\normalfont (\roman{*})},ref=(\roman{*})]
\item\label{prop-retract} If $A\subset X$ is given by a~multishelf retraction $r_A\colon X\to A$,
		the~sequence (\ref{eq:seq-subcomplex}) splits.
\item\label{prop-t-sub} If $A=\{t\}$ is a~submultishelf, then the~sequence
		(\ref{eq:seq-subcomplex}) splits.
\item\label{prop-ideal} Assume that $(x\star_r t)\star_r t = x\star_r t$ for some $\star_r$
		and every $x\in X$. Then the~sequence (\ref{eq:seq-subcomplex}) splits for
		$A=X\star_r t$ with a~retraction induced by $r_t(x):=x\star_r t$.
\end{enumerate}
\end{proposition}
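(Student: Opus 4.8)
The plan is to prove each of the three parts by exhibiting an explicit splitting of the short exact sequence \eqref{eq:seq-subcomplex}, using the fact that a retraction of multishelves induces a chain-level retraction. For \ref{prop-retract}, suppose $r_A\colon X\to A$ is a multishelf retraction, i.e. $r_A$ is a multishelf homomorphism with $r_A|_A=\id_A$. Then the induced chain map $(r_A)_\sharp\colon C(X)\to C(A)$ satisfies $(r_A)_\sharp\circ\iota_\sharp=\id_{C(A)}$, where $\iota\colon A\hookrightarrow X$ is the inclusion; hence $(r_A)_\sharp$ is a chain-level left splitting of \eqref{eq:seq-subcomplex}, and the sequence splits as a sequence of chain complexes. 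The only thing to check here is that $r_A$ being a multishelf homomorphism really does give a chain map commuting with all the $d_i^{\star_r}$, but this is immediate from the definition of $f_\sharp$ and the formula for $d_i^{\star_r}$, since $r_A$ preserves each $\star_r$.

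The heart of the argument is therefore to show that in cases \ref{prop-t-sub} and \ref{prop-ideal} such a retraction exists. For \ref{prop-t-sub}, with $A=\{t\}$ a submultishelf (so $t\star_r t=t$ for all $r$), I claim the constant map $r_t\colon X\to\{t\}$, $r_t(x)=t$, is a multishelf homomorphism: indeed $r_t(x\star_r y)=t=t\star_r t=r_t(x)\star_r r_t(y)$, using idempotency of $t$. It obviously restricts to the identity on $\{t\}$, so part \ref{prop-t-sub} follows from part \ref{prop-retract}. For \ref{prop-ideal}, the candidate is $r_t(x):=x\star_r t$ onto $A=X\star_r t$. One must verify three things: first, that $r_t$ lands in $A$ (clear by definition of $A$); second, that $r_t$ restricts to the identity on $A$, which is exactly the hypothesis $(x\star_r t)\star_r t=x\star_r t$; and third — the main obstacle — that $r_t$ is a multishelf homomorphism, i.e. $r_t(x\star_s y)=r_t(x)\star_s r_t(y)$ for every operation $\star_s$ in the multishelf, not just for $\star_r$. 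For $\star_s=\star_r$ this is the computation $(x\star_r y)\star_r t=(x\star_r t)\star_r(y\star_r t)$, which is self-distributivity of $\star_r$. For $s\neq r$ one needs $(x\star_s y)\star_r t=(x\star_r t)\star_s(y\star_r t)$, which is precisely right distributivity of $\star_s$ with respect to $\star_r$ — available because $\{\star_1,\dots,\star_k\}$ is a distributive set. So $r_t$ is a multishelf retraction onto $A$, and \ref{prop-ideal} again reduces to \ref{prop-retract}.

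The one remaining subtlety worth spelling out is that $A=X\star_r t$ is genuinely a submultishelf in the situation of \ref{prop-ideal}, so that $C(A)$ makes sense as a subcomplex: for $a=x\star_r t$ and $a'=x'\star_r t$ in $A$ and any operation $\star_s$, distributivity gives $a\star_s a'=(x\star_r t)\star_s(x'\star_r t)=(x\star_s x')\star_r t\in A$ when $s=r$ by self-distributivity, and more generally $A$ is closed because it is the image of the inner endomorphism $\star_r^t$, which is a multishelf homomorphism; alternatively this closure is subsumed by the homomorphism property of $r_t$ just established. Thus in all three cases the sequence \eqref{eq:seq-subcomplex} is split by an honest chain map, completing the proof. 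I expect the distributivity bookkeeping in \ref{prop-ideal} — keeping straight which operation distributes over which, and handling the mixed case $s\neq r$ — to be the only place where care is needed; everything else is formal.
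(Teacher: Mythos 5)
Your proof is correct and follows the same route as the paper: exhibit $(r_A)_\sharp$ as a chain-level retraction splitting the sequence, and reduce \ref{prop-t-sub} and \ref{prop-ideal} to \ref{prop-retract} by checking that the constant map to $t$ and the inner endomorphism $x\mapsto x\star_r t$ are multishelf retractions (the paper states this in one line; you supply the distributivity verifications it leaves implicit). The only nitpick is terminological: with the paper's convention, the identity $(x\star_s y)\star_r t=(x\star_r t)\star_s(y\star_r t)$ is right distributivity of $\star_r$ with respect to $\star_s$, not the other way around — immaterial here since the distributive set assumption is symmetric in the pair.
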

\begin{proof}
The~retraction in~\ref{prop-retract} induces a~chain map
$(r_A)_\sharp\colon C(X)\to C(A)$ that is right inverse to the~inclusion
$C(A)\subset C(X)$. Parts~\ref{prop-t-sub} and \ref{prop-ideal} are special
cases of \ref{prop-retract}.
\end{proof}

\noindent Notice that \ref{prop-t-sub} is a~special case of~\ref{prop-ideal},
when one of the~operations is the~\projright, i.e. $\star_r = \rt$.
Furthermore, the~splitting for $C(X)\to C(X,A)$
is given by $\bar x\mapsto x-(r_A)_\sharp(x)$.
This will be used later for the~case $A=\{t\}$.

When an~exact sequence of chain complexes splits, so do homology groups.
In particular,
\begin{equation}
H_n(X) \cong H_n(X,t)\oplus H_n(t).
\end{equation}
The~complex $C(X,t)$ as well as homology $H(X,t)$ are called \emph{reduced}.
By 5-lemma, both are independent of $t$ as long as $\{t\}$ is a~submultishelf of $X$.

Proposition \ref{prop:compl-split} can be strengthened for pairs as follows.

\begin{proposition}\label{prop:compl-split-pairs}
Let $X$ be a~multishelf and $A\subset B\subset X$ be submultishelves.
Then there is a~short exact sequence of complexes
\begin{equation}\label{eq:seq-for-tripple}
0 \to C(B,A) \to C(X,A) \to C(X,B) \to 0
\end{equation}
natural with respect to maps of triples $f\colon (X,B,A)\to(X',B',A')$.
Moreover, if $B$ is a~multishelf retract of $X$, then
the~sequence (\ref{eq:seq-for-tripple}) splits.
\end{proposition}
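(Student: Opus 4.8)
The plan is to build both statements directly from the definitions of the relative complexes $C(Y,Z) := C(Y)/C(Z)$, using that all chain groups are free abelian and hence everything splits \emph{degree-wise} automatically; the only real content is the chain-level naturality and the compatibility of the splitting with the differential.

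First I would establish the short exact sequence \eqref{eq:seq-for-tripple}. Since $A\subset B$ are submultishelves, we have the degreewise inclusions $C_n(A)\subset C_n(B)\subset C_n(X)$, each a direct summand because it is spanned by a subset of the basis $X^{n+1}$. The third isomorphism theorem for modules gives, in each degree, a natural short exact sequence $0\to C_n(B)/C_n(A)\to C_n(X)/C_n(A)\to C_n(X)/C_n(B)\to 0$, i.e. $0\to C_n(B,A)\to C_n(X,A)\to C_n(X,B)\to 0$. The maps here are induced by the identity on $X^{n+1}$, so they commute with every face map $d_i$ (each $d_i^{\star_r}$ is defined by the same formula on all three complexes and preserves the sub-basis coming from a submultishelf), hence with $\partial$; thus this is a short exact sequence of chain complexes. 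Naturality with respect to a map of triples $f\colon(X,B,A)\to(X',B',A')$ is immediate: $f_\sharp$ sends $X^{n+1}$ to $X'^{n+1}$ compatibly with the sub-bases indexed by $A,B$ and $A',B'$, so it descends to a morphism of the three quotient complexes commuting with the sequence maps.

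Next I would treat the splitting. Suppose $r\colon X\to B$ is a multishelf retraction, so $r|_B=\id_B$ and, a fortiori, $r|_A=\id_A$ since $A\subset B$. By the functoriality of the multi-term chain complex recalled earlier in the excerpt, $r$ induces a chain map $r_\sharp\colon C(X)\to C(B)$ with $r_\sharp|_{C(B)}=\id$. Because $r(A)=A$, $r_\sharp$ carries $C(A)$ into $C(A)$ (identically), hence descends to a chain map $\bar r_\sharp\colon C(X,A)\to C(B,A)$ which restricts to the identity on $C(B,A)$; this is the desired splitting of the injection $C(B,A)\hookrightarrow C(X,A)$ in \eqref{eq:seq-for-tripple}. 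Equivalently, one gets a splitting of the surjection $C(X,A)\to C(X,B)$ by $\bar x\mapsto x-\bar r_\sharp(x)$, exactly as in the remark following Proposition~\ref{prop:compl-split}. Since a split exact sequence of complexes splits on homology, one also obtains $H_n(X,A)\cong H_n(B,A)\oplus H_n(X,B)$.

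The argument is essentially bookkeeping, so there is no serious obstacle; the one point to be careful about is that $r_\sharp$ genuinely restricts to the identity on the sub-complex $C(A)$ and not merely maps $C(A)$ into itself — but this is clear since $r|_A=\id_A$, and likewise that the formulas for $d_i^{\star_r}$ are literally the same on $X$, $B$, and their quotients, so that passing to quotients is unambiguous. Everything else follows from the third isomorphism theorem and the functoriality statement already recorded in the excerpt.
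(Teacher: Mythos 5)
Your proof is correct and takes essentially the same approach as the paper: the splitting via the induced chain map $r_\sharp\colon C(X,A)\to C(B,A)$, restricting to the identity on $C(B,A)$ because $r|_A=\id_A$, is exactly the paper's argument. The only cosmetic difference is that the paper obtains exactness of the sequence from the $3\times 3$-lemma applied to the evident diagram of complexes, whereas you verify it degree-wise via the third isomorphism theorem and check compatibility with the face maps directly---both are routine and equivalent.
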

\begin{proof}
Exactness follows from the~$3\times 3$-lemma applied to the~diagram
$$\xy
\morphism( 400,1400)/->/<0,-300>[0`\phantom{C(A)};]
\morphism(1000,1400)/->/<0,-300>[0`\phantom{C(B)};]
\morphism(1600,1400)/->/<0,-300>[0`\phantom{C(B,A)};]
\morphism(   0,1100)/->/<400,0>[0`\phantom{C(A)};]
\morphism( 400,1100)/->/<600,0>[C(A)`\phantom{C(B)};]
\morphism(1000,1100)/->/<600,0>[C(B)`\phantom{C(B,A)};]
\morphism(1600,1100)/->/<500,0>[C(B,A)`0;]
\morphism( 400,1100)/=/< 0,-400>[\phantom{C(A)}`\phantom{C(A)};]
\morphism(1000,1100)/->/<0,-400>[\phantom{C(B)}`\phantom{C(X)};]
\morphism(1600,1100)/->/<0,-400>[\phantom{C(B,A)}`\phantom{C(X,A)};]
\morphism(   0,700)/->/<400,0>[0`\phantom{C(A)};]
\morphism( 400,700)/->/<600,0>[C(A)`\phantom{C(X)};]
\morphism(1000,700)/->/<600,0>[C(X)`\phantom{C(X,A)};]
\morphism(1600,700)/->/<500,0>[C(X,A)`0;]
\morphism( 400,700)/->/<0,-400>[\phantom{C(A)}`\phantom{0};]
\morphism(1000,700)/->/<0,-400>[\phantom{C(X)}`\phantom{C(X,B)};]
\morphism(1600,700)/->/<0,-400>[\phantom{C(X,A)}`\phantom{C(X,B)};]
\morphism( 400,300)/->/<600,0>[0`\phantom{C(X,B)};]
\morphism(1000,300)/=/< 600,0>[C(X,B)`\phantom{C(X,B)};]
\morphism(1600,300)/->/<500,0>[C(X,B)`0;]
\morphism(1000,300)/->/<0,-300>[\phantom{C(X,B)}`0;]
\morphism(1600,300)/->/<0,-300>[\phantom{C(X,B)}`0;]
\endxy$$
and naturality is obvious from definition.

For the~second part, let $r\colon X\to B$ be a~multishelf retraction.
Then $r|_A = \id_A$ and there is a~chain map $r_\sharp\colon C(X,A)\to C(B,A)$
that splits the~sequence.
\end{proof}

Sometimes it is convenient to allow $A$ to be empty,
in which case $C(X,\emptyset) = C(X)$.

\subsection{The~initial degenerate subcomplex}\label{sec:init-deg}
Assume for now that $X$ is a~multispindle and $A\subset X$ is
a~submultispindle. Proposition \ref{prop:multi-term-is-weak-simpl}
and the~constructions from section~\ref{sec:deg-subcompl} can be easily
extended for a~pair $(X,A)$, so that we have a~subcomplex $\F(X,A)$.
It is straightforward to check that $\F$ is natural with respect
to maps between pairs of multispindles.

Consider now the~following two maps:
\begin{align*}
	\sigma\colon& C_n\to C_{n+1}, & \sigma(x_0,\dots,x_n) &= (-1)^{n+1} s_0(x_0,\dots,x_n),\\
	\pi\colon& C_n\to C_{n-1}, & \pi(x_0,\dots,x_n) &= (-1)^n d_0^{\lt}(x_0,\dots,x_n),
\end{align*}
and let $\Sigma=\sum_{i=1}^k a_i$ be the~sum of scalars defining $\partial^{(a_1,...,a_k)}$.

\begin{lemma}\label{lem:s0-htpy}
The~map $s_0\colon C_n(X,A)\to C_{n+1}(X,A)$ is a~chain homotopy between 
$\Sigma\cdots\sigma\pi$ and the zero map.
In particular, a~composition $\sigma\pi\colon C(X,A)\to C(X,A)$ is a~chain map.
\end{lemma}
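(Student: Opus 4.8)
The plan is to verify directly that $s_0$ is a chain homotopy, i.e. that
$$
\partial_{n+1} s_0 + s_0 \partial_n = \Sigma\cdot\sigma\pi
$$
as maps $C_n(X,A)\to C_n(X,A)$. First I would expand $\partial_{n+1} s_0$ using the relations between faces and degeneracies available in a weak simplicial module (conditions \ref{SM-ds} and \ref{WSM}). Writing $\partial_{n+1} s_0 = \sum_{i=0}^{n+1}(-1)^i d_i s_0$, the term $i=0$ gives $d_0 s_0 = \id$ by \ref{WSM} combined with $d_1 s_0 = \id$; actually the cleaner bookkeeping is that $d_0 s_0$ and $d_1 s_0$ both equal the identity on $C_n$ (this is exactly \ref{WSM} together with the fact that for a multispindle $d_i s_i = d_{i+1}s_i = \id$ — here I should double-check that \ref{WSM} alone suffices or whether I need the spindle idempotency to get the value $\id$; for a multispindle the two once-repeated faces do collapse the repetition, so $d_0 s_0 = d_1 s_0 = \id_{C_n}$). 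For $i\geq 2$, relation \ref{SM-ds} with $j=0$ and $i>1$ gives $d_i s_0 = s_0 d_{i-1}$.

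Then I would carry out the cancellation. The $i=0$ and $i=1$ terms of $\partial_{n+1}s_0$ are $d_0 s_0 - d_1 s_0 = \id - \id = 0$, so only the terms $i\geq 2$ survive:
$$
\partial_{n+1} s_0 = \sum_{i=2}^{n+1}(-1)^i s_0 d_{i-1} = -\sum_{j=1}^{n} (-1)^{j} s_0 d_{j},
$$
reindexing $j=i-1$. Meanwhile $s_0\partial_n = \sum_{j=0}^{n}(-1)^j s_0 d_j = s_0 d_0 + \sum_{j=1}^{n}(-1)^j s_0 d_j$. Adding, the sums over $j\geq 1$ cancel, leaving $\partial_{n+1}s_0 + s_0\partial_n = s_0 d_0$. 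The point I then need is that $d_0 = \sum_{r=1}^k a_r d_0^{\star_r}$, and each $d_0^{\star_r}(x_0,\dots,x_n) = (x_1,\dots,x_n) = d_0^{\lt}(x_0,\dots,x_n)$ independently of the operation $\star_r$ (this is immediate from the defining formula for $d_0^\star$, which just deletes the first coordinate). Hence $d_0 = \Sigma\cdot d_0^{\lt}$, and composing with $s_0$ on the left: $s_0 d_0 = \Sigma\cdot s_0 d_0^{\lt}$. Finally I match signs: $\sigma\pi(x_0,\dots,x_n) = (-1)^{n+1}s_0\big((-1)^n d_0^{\lt}(x_0,\dots,x_n)\big) = (-1)^{2n+1}s_0 d_0^{\lt}(\cdots) = -s_0 d_0^{\lt}(\cdots)$. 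So $s_0 d_0 = -\Sigma\cdot\sigma\pi$ — which means I have a sign discrepancy with the stated $\Sigma\cdots\sigma\pi$, so I will need to recheck the sign convention in $\sigma$ (the $(-1)^{n+1}$) or in the homotopy identity $\partial s + s\partial$ versus $-(\partial s + s\partial)$; presumably the authors' grading shift or homotopy sign convention absorbs it, and I would state the homotopy in whichever sign convention makes $\Sigma\cdot\sigma\pi$ come out correctly.

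The "in particular" clause is then formal: if $h$ is a chain homotopy between a map $f$ and $0$, then $f = \partial h + h\partial$ is automatically a chain map, since $\partial f = \partial\partial h + \partial h \partial = \partial h\partial$ and $f\partial = \partial h\partial + h\partial\partial = \partial h \partial$, so $\partial f = f\partial$; applying this with $f = \Sigma\cdot\sigma\pi$ shows $\sigma\pi$ is a chain map (the scalar $\Sigma$ is irrelevant unless it is a zero divisor, but over $\Z$ or when $\Sigma\neq 0$ this is fine, and in fact $\sigma\pi$ being a chain map can also be checked directly without dividing, since $\sigma$ and $\pi$ are built from a single face and a single degeneracy — but the homotopy argument is cleaner to cite).

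The main obstacle I expect is purely the sign bookkeeping: keeping track of the $(-1)^i$ in $\partial$, the $(-1)^{n+1}$ and $(-1)^n$ in $\sigma$ and $\pi$, and the reindexing shift $i\mapsto i-1$, all of which must conspire to produce $\Sigma\cdot\sigma\pi$ with the correct sign. The structural content — that $d_i s_0 = s_0 d_{i-1}$ for $i\geq 2$, that the $i=0,1$ terms cancel by \ref{WSM}, and that $d_0^{\star_r}$ is operation-independent — is straightforward and follows from the relations listed in Definition \ref{def:simpl-mod} together with the explicit formulas for $d_i^\star$.
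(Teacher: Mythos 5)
Your computation is the same as the paper's: expand $\partial_{n+1}s_0+s_0\partial_n$, cancel the $i=0,1$ terms using $d_0s_0=d_1s_0$, push the remaining faces past $s_0$ via $d_is_0=s_0d_{i-1}$ for $i>1$, and watch everything telescope down to $s_0d_0=\Sigma\, s_0d_0^{\lt}$. Two small slips to fix. First, $d_0s_0=d_1s_0=\Sigma\cdot\id_{C_n}$, not $\id_{C_n}$: here $d_i=\sum_r a_r d_i^{\star_r}$ is a weighted sum of $k$ face maps, each of which collapses the repeated entry, so the identity picks up the factor $\Sigma$. This is harmless for your argument, since only the difference $d_0s_0-d_1s_0=0$ enters. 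Second, your sign ``discrepancy'' is not real: $\pi$ lands in $C_{n-1}$, so the copy of $\sigma$ occurring in $\sigma\pi$ carries the sign $(-1)^{(n-1)+1}=(-1)^n$, not $(-1)^{n+1}$; hence $\sigma\pi=(-1)^n(-1)^n s_0d_0^{\lt}=s_0d_0^{\lt}$ and $s_0d_0=\Sigma\cdot\sigma\pi$ with exactly the stated sign. For the ``in particular'' clause, your zero-divisor worry is legitimate only when $\Sigma=0$ (the coefficients are $\Z$ here), and in that case the identity $\partial s_0+s_0\partial=0$ shows directly that $\sigma$ and $\pi$ are each chain maps (their alternating signs compensate for the anticommutation), so $\sigma\pi$ is a chain map in that case as well; for $\Sigma\neq 0$ the paper's observation that the chain groups are free lets you cancel $\Sigma$ from $\Sigma(\partial\sigma\pi-\sigma\pi\partial)=0$.
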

\begin{proof}
We use the fact that $d_0s_0=d_1s_0= \Sigma\cdot\id_{C_n}$ and that 
$(C_n(X,A),d_i,s_j)$ is a a weak simplicial module. In particular,
$d_is_0=s_0d_{i-1}$ for $i>1$ and we have:
\begin{align*}
\partial^{(a_1,...,a_k)}s_0 &+ s_0\partial^{(a_1,...,a_k)}
	= \sum_{i=0}^{n+1}(-1)^{i}d_is_0 + \sum_{i=0}^n(-1)^{i}s_0d_i = \\
&= (d_0s_0- d_1s_0) + \sum_{i=2}^{n+1}(-1)^{i}d_is_0 + \sum_{i=0}^n(-1)^{i}s_0d_i =\\
&= \sum_{i=2}^{n+1}(-1)^{i}s_0d_{i-1} + \sum_{i=0}^n(-1)^{i}s_0d_i = \\
&= \sum_{i=1}^{n+1}(-1)^{i+1}s_0d_{i} + \sum_{i=0}^n(-1)^{i}s_0d_i= s_0d_0
= (\sum_{i=1}^k a_i) \sigma\pi.
\end{align*}
Therefore, $\Sigma\cdot\sigma\pi$ is a chain map and as all groups $C_n(X,A)$
are free, the~composition $\sigma\pi$ is a chain map.
\end{proof}

\begin{corollary}\label{cor:split-F0}\ 

\begin{enumerate}[label={\normalfont (\roman{*})},ref=(\roman{*})]
\item\label{cor-Sigma-anih} $\Sigma$ annihilates $H_n(\F(X,A))$.
\item\label{cor-Sigma-0} If $\Sigma=0$ then both $\sigma$ and $\pi$ are chain maps.
\item\label{cor-C-F0-Sigma-isom} The~maps $\sigma$ and $\pi$ induce isomorphisms
of chain complexes
$$
C_\bullet(X,A;\Z_\Sigma)\cong \F_{\bullet+1}(X,A;\Z_\Sigma).
$$
In particular, $H_n(X,A;\Z_\Sigma)\cong H_{n+1}(\F(X,A);\Z_\Sigma)$.
\end{enumerate}
\end{corollary}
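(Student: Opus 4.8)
The plan is to derive all three parts of Corollary~\ref{cor:split-F0} directly from Lemma~\ref{lem:s0-htpy}, which gives us that $s_0$ is a chain homotopy between $\Sigma\cdot\sigma\pi$ and $0$ on $C(X,A)$, and that $\sigma\pi$ is itself a chain map. For part~\ref{cor-Sigma-anih}, the key observation is that $\sigma$ and $\pi$ restrict to maps involving $\F(X,A)$: indeed $\sigma(C_n) = (-1)^{n+1}s_0(C_n) \subset \F_{n+1}(X,A)$ by definition of the initial degenerate subcomplex, and conversely applying $\pi$ to a generator of $\F_{n+1}$ lands in $C_n$. So on $\F(X,A)$ we can consider the composite $\pi\sigma$; I would check that $\pi\sigma$ acts as $\Sigma\cdot\id$ (up to sign bookkeeping this is again the relation $d_0s_0 = \Sigma\cdot\id$, since $\pi\sigma = (-1)^n(-1)^{n+1}d_0^{\lt}s_0 = -d_0^{\lt}s_0$; one must be careful whether the relevant identity is $d_0^{\lt}s_0=\id$ or the full $d_0s_0=\Sigma\cdot\id$ — here $d_0^{\lt}s_0=\id$ exactly because $d_0^{\lt}$ drops the first coordinate and $s_0$ doubled it, so in fact $\pi\sigma=-\id$). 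The more robust route is: since $\sigma\pi$ is null-homotopic on $C(X,A)$ after multiplying by $\Sigma$, the induced map $(\sigma\pi)_* $ satisfies $\Sigma\cdot(\sigma\pi)_* = 0$ on $H_*(C(X,A))$; combined with $\sigma_*\pi_* = (\sigma\pi)_*$ on $H(\F)$ factoring appropriately, one gets $\Sigma$ kills $H_n(\F(X,A))$. Let me instead phrase it cleanly: $\pi\colon \F(X,A)\to C(X,A)$ and $\sigma\colon C(X,A)\to\F(X,A)$ are chain maps whenever $\sigma\pi$ is (which Lemma~\ref{lem:s0-htpy} gives, as the groups are free and $\Sigma\sigma\pi$ is a chain map — wait, we need $\sigma,\pi$ separately to be chain maps on the nose, which is only asserted when $\Sigma=0$). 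So for part~\ref{cor-Sigma-anih} I will argue purely via the homotopy: on $\F(X,A)$, the composite $\sigma\circ(\pi|_{\F})$ equals (up to sign) the identity, hence on homology $\Sigma\cdot\id_{H_n(\F)} = \Sigma\cdot(\sigma\pi)_*|_{H_n(\F)}$, which is zero because $\Sigma\sigma\pi\simeq 0$ on all of $C(X,A)$ and this homotopy can be restricted — more precisely, $s_0$ itself maps $\F(X,A)$ into $\F(X,A)$ by relation~\ref{SM-ss}, so the homotopy restricts and $\Sigma\sigma\pi\simeq 0$ already on $\F(X,A)$, giving $\Sigma\cdot\id \simeq 0$ there, i.e. $\Sigma$ annihilates $H_*(\F(X,A))$.

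For part~\ref{cor-Sigma-0}: if $\Sigma = 0$ then the relation $d_0s_0 = d_1s_0 = \Sigma\cdot\id = 0$ simplifies the boundary computations. I would redo the calculation in the proof of Lemma~\ref{lem:s0-htpy} but applied to $\sigma$ and $\pi$ individually rather than their product. For $\sigma$: $\partial\sigma + \sigma\partial$ telescopes using $d_is_0 = s_0d_{i-1}$ for $i>1$ together with $(d_0 - d_1)s_0 = 0$ (now $\Sigma=0$, not just a common value), and the sign $(-1)^{n+1}$ built into $\sigma$ is exactly what is needed to make the two surviving sums cancel, yielding $\partial\sigma = -\sigma\partial$, i.e. $\sigma$ is a chain map. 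A parallel (dual) computation with $d_0^{\lt}$ — using that $d_i d_0^{\lt} = d_0^{\lt} d_{i+1}$ type relations hold because $\lt$ is right distributive with respect to every operation (as noted in the text) — shows $\pi$ is a chain map. Alternatively, and more cheaply: $\sigma\pi$ is always a chain map (Lemma~\ref{lem:s0-htpy}) and $\pi\sigma = \pm\id$ is always a chain map, so if one of $\sigma,\pi$ is a chain map so is the other, and it suffices to verify $\sigma$ directly when $\Sigma=0$.

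For part~\ref{cor-C-F0-Sigma-isom}: work over $\Z_\Sigma$, where $\Sigma = 0$ identically, so by part~\ref{cor-Sigma-0} both $\sigma$ and $\pi$ are chain maps $C_\bullet(X,A;\Z_\Sigma)\rightleftarrows \F_{\bullet+1}(X,A;\Z_\Sigma)$. I claim they are mutually inverse up to sign, hence isomorphisms. In one direction $\pi\sigma(x_0,\dots,x_n) = (-1)^n(-1)^{n+1} d_0^{\lt}s_0(x_0,\dots,x_n) = -d_0^{\lt}(x_0,x_0,x_1,\dots,x_n) = -(x_0,x_1,\dots,x_n)$, so $\pi\sigma = -\id$. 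In the other direction $\sigma\pi(y_0,\dots,y_{n+1})$ for $(y_0,\dots,y_{n+1})$ a generator of $\F_{n+1}$: every such generator is a $\Z$-combination of elements in the images of $s_0,\dots,s_n$, but modulo the computation one checks $\sigma\pi = -\id$ on the image of $s_0$ and that the images of $s_1,\dots,s_n$ are handled because $\F_{n+1}$ as a complex over $\Z_\Sigma$ is generated (up to the subcomplex structure) appropriately — this is the point requiring the most care. The clean statement is: since $\pi\sigma = -\id$ on $C_\bullet$ and $\sigma$ is injective with image exactly $\F_{\bullet+1}$ (its image is $s_0(C_\bullet) = \F^0_{\bullet+1}$, and over $\Z_\Sigma$ one shows $\F = \F^0$ — this uses $\Sigma=0$ crucially, as the higher filtration quotients are built from copies of $C(\cdot;\Z_\Sigma)$ which is where $\Sigma$ enters), we get that $\sigma$ is an isomorphism onto $\F_{\bullet+1}(X,A;\Z_\Sigma)$ with inverse $-\pi$. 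The isomorphism on homology $H_n(X,A;\Z_\Sigma)\cong H_{n+1}(\F(X,A);\Z_\Sigma)$ follows immediately.

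The main obstacle I anticipate is part~\ref{cor-C-F0-Sigma-isom}: showing that $\sigma$ actually surjects onto the \emph{whole} initial degenerate subcomplex $\F(X,A)$ over $\Z_\Sigma$, not just onto $\F^0 = \operatorname{span} s_0(C_{\bullet-1})$. This requires understanding that, when $\Sigma = 0$, the filtration $\F^0\subset\F^1\subset\cdots\subset\F$ collapses, or equivalently that each successive quotient $\F^p/\F^{p-1}$ — which by the weak-simplicial structure is a shifted copy of a complex whose differential carries a factor related to $\Sigma$ — becomes acyclic or trivial in the relevant sense over $\Z_\Sigma$. One must invoke (or reprove) the structural fact about $\F$ hinted at in Section~\ref{sec:deg-subcompl} that the filtration quotients are governed by $\Sigma$; once that is in hand, $\sigma$ and $\pi$ being mutually inverse chain maps is a routine sign check.
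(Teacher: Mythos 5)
Your proof takes the same route as the paper: all three parts are read off from Lemma~\ref{lem:s0-htpy}, using that $\sigma\pi$ restricts to the identity on $\F(X,A)$ for part~\ref{cor-Sigma-anih}, that $\Sigma=0$ turns the homotopy identity $\partial s_0+s_0\partial=\Sigma\sigma\pi$ into the statement that $s_0$ anticommutes with $\partial$ (so that $\sigma$, and dually $\pi$, commutes with it) for part~\ref{cor-Sigma-0}, and that $\sigma$ and $\pi$ are mutually inverse chain maps over $\Z_\Sigma$ for part~\ref{cor-C-F0-Sigma-isom}. Two points need correcting, though neither changes the substance. First, the ``main obstacle'' you anticipate in part~\ref{cor-C-F0-Sigma-isom} is not there: in this paper $\F$ is notation for $\mathcal{F}^0$, the \emph{initial} degenerate subcomplex $\F_{n+1}(X,A)=\mathrm{span}\,s_0\bigl(C_n(X,A)\bigr)$, i.e.\ only the bottom step of the filtration of $C^D$, not the full degenerate complex. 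Hence $\sigma$ surjects onto $\F_{\bullet+1}$ by definition, your closing paragraph about collapsing the filtration is unnecessary, and no structural fact about the quotients $\mathcal{F}^p/\mathcal{F}^{p-1}$ is needed. (This reading matters: for the full degenerate complex the rank count already rules out an isomorphism of the asserted form.) Second, the signs: $\pi$ acts on an $(n+1)$-chain with the sign $(-1)^{n+1}$, so $\pi\sigma=(-1)^{n+1}(-1)^{n+1}d_0^{\lt}s_0=+\id$, and likewise $\sigma\pi=\id$ on $\F(X,A)$ --- which is exactly what part~\ref{cor-Sigma-anih} needs; similarly, the sign built into $\sigma$ converts $\partial s_0=-s_0\partial$ into the chain-map condition $\partial\sigma=+\sigma\partial$, not into $\partial\sigma=-\sigma\partial$ as written. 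With these two repairs your argument is complete and coincides with the paper's.
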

\begin{proof}
Part~\ref{cor-Sigma-anih} follows directly from Lemma~\ref{lem:s0-htpy},
because $\sigma\pi$ is an~identity on $\F(X,A)$.
If $\Sigma=0$, then Lemma~\ref{lem:s0-htpy} implies $\sigma$ is a~chain map
and similarly for $\pi$, what gives \ref{cor-Sigma-0}.
Finally, \ref{cor-C-F0-Sigma-isom} is a~consequence of the~above.
\end{proof}

For us the~most important consequence of Lemma~\ref{lem:s0-htpy}
is the~following fact.

\begin{proposition}\label{prop:split-F0}
There is a~natural short exact sequence of chain complexes
\begin{equation}
0 \to \F(X,A) \to^i C(X,A) \to^p \CF(X,A) \to 0
\end{equation}
which splits. Namely, there is a~retraction $\sigma\pi\colon C(X,A) \to \F(X,A)$
and a~splitting map $(\id-\sigma\pi)\colon \CF(X,A) \to C(X,A)$, and both are natural
with respect to maps of pairs of multispindles.
\end{proposition}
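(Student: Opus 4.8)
The plan is to verify directly that $\sigma\pi$ is the desired retraction, using Lemma~\ref{lem:s0-htpy} and the fact that $\F(X,A)$ is generated by the image of $s_0$. First I would observe that $\sigma\pi\colon C(X,A)\to C(X,A)$ is a chain map (this is the last sentence of Lemma~\ref{lem:s0-htpy}) and that its image lands inside $\F(X,A)$: indeed $\sigma\pi(x_0,\dots,x_n) = (-1)^{n+1}(-1)^n s_0 d_0^{\lt}(x_0,\dots,x_n) = -s_0(x_1,\dots,x_n)$, which is in $\F_{n+1}(X,A) = \mathrm{span}\,s_0(C_n(X,A))$ by definition. So $\sigma\pi$ factors through a chain map $C(X,A)\to\F(X,A)$.

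Next I would check that this map is a retraction, i.e. that $\sigma\pi$ restricted to $\F(X,A)$ is the identity. A generic generator of $\F_{n+1}(X,A)$ has the form $s_0(y_0,\dots,y_n) = (y_0,y_0,y_1,\dots,y_n)$. Applying $\sigma\pi$ to it: $\pi(y_0,y_0,y_1,\dots,y_n) = (-1)^{n+1} d_0^{\lt}(y_0,y_0,\dots,y_n) = (-1)^{n+1}(y_0,y_1,\dots,y_n)$, and then $\sigma$ of that is $(-1)^{n+2}\cdot(-1)^{n+1}s_0(y_0,\dots,y_n) = s_0(y_0,\dots,y_n)$. Hence $\sigma\pi|_{\F(X,A)} = \id$, so $\sigma\pi$ is a retraction onto $\F(X,A)$. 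Since $\sigma\pi$ is idempotent (being a retraction) and a chain map, $\id - \sigma\pi$ is also an idempotent chain map, with image the kernel of $\sigma\pi$; because $p\colon C(X,A)\to\CF(X,A)$ restricted to $\ker(\sigma\pi)$ is an isomorphism of chain complexes, $(\id-\sigma\pi)$ descends to the claimed splitting $\CF(X,A)\to C(X,A)$ of $p$. This gives the split short exact sequence with the stated maps.

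Finally I would address naturality. Both $\sigma$ and $\pi$ are built out of $s_0$ and $d_0^{\lt}$ with sign factors depending only on the homological degree, and the degeneracy $s_0$ and the face $d_0^{\lt}$ commute with the chain maps $f_\sharp$ induced by a morphism of pairs of multispindles (this is exactly the naturality of $\F$ noted just before Lemma~\ref{lem:s0-htpy}, together with the fact that $d_0^{\lt}$ simply deletes the first coordinate, which any set map respects). Therefore $\sigma\pi$ and $\id-\sigma\pi$ commute with $f_\sharp$, giving naturality of the splitting. I do not expect any serious obstacle here: the content is entirely contained in Lemma~\ref{lem:s0-htpy}, and the only thing to be careful about is tracking the sign conventions in $\sigma$ and $\pi$ so that the composite $\sigma\pi$ is genuinely idempotent rather than idempotent up to sign — which the computation above confirms.
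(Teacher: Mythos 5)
Your argument is correct and follows the same route as the paper's (which is terser: it simply cites Lemma~\ref{lem:s0-htpy} for $\sigma\pi$ being a chain map, notes $\sigma\pi\circ i=\id$, and checks that $f_\sharp$ commutes with $\sigma$ and $\pi$). One caveat, precisely on the point you flag as the thing to be careful about: your sign bookkeeping slips. Since $\pi$ lands in degree $n-1$, the sign carried by $\sigma$ on that output is $(-1)^{(n-1)+1}=(-1)^n$, so $\sigma\pi(x_0,\dots,x_n)=(-1)^n(-1)^n\,s_0d_0^{\lt}(x_0,\dots,x_n)=+s_0(x_1,\dots,x_n)$, not $-s_0(x_1,\dots,x_n)$ as in your first display; taken literally, your sign would give $\sigma\pi|_{\F(X,A)}=-\id$ and destroy the retraction. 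Likewise in your second computation the factor should be $(-1)^{n+1}\cdot(-1)^{n+1}=+1$ rather than $(-1)^{n+2}\cdot(-1)^{n+1}$, which equals $-1$ even though you state the product as $+s_0(y_0,\dots,y_n)$. With the degree-correct signs everything you conclude is right and agrees with Corollary~\ref{cor:split-F0}\ref{cor-Sigma-anih}, which records that $\sigma\pi$ is the identity on $\F(X,A)$.
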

\begin{proof}
We have checked that $\sigma\pi$ is a chain map (Lemma~\ref{lem:s0-htpy}).
Furthermore, $\sigma\pi\circ i=\id$ implies $\sigma\pi$ is a~retraction and the~sequence splits.
For naturality, take a~map of multispindles $f\colon (X,A)\to (Y,B)$ and notice that
$\sigma\circ f_\sharp = f_\sharp\circ\sigma$ and similarly for $\pi$.
\end{proof}

This result when combined with Propositions~\ref{prop:compl-split}
and~\ref{prop:compl-split-pairs} provides the~following fact.

\begin{proposition}\label{prop:F-C-CF-retract-agree}
Let $X$ be a~multispindle and $A\subset B\subset X$ submultispindles. Then in the~following
diagram all rows and columns are exact.
$$\xy
\morphism( 500,1400)/->/<0,-300>[0`\phantom{\F(B,A)};]
\morphism(1200,1400)/->/<0,-300>[0`\phantom{C(B,A)};]
\morphism(1900,1400)/->/<0,-300>[0`\phantom{\CF(B,A)};]
\morphism(   0,1100)/->/<500,0>[0`\phantom{\F(B,A)};]
\morphism( 500,1100)/->/<700,0>[\F(B,A)`\phantom{C(B,A)};]
\morphism(1200,1100)/->/<700,0>[C(B,A)`\phantom{\CF(B,A)};]
\morphism(1900,1100)/->/<500,0>[\CF(B,A)`0;]
\morphism( 500,1100)/->/< 0,-400>[\phantom{\F(B,A)}`\phantom{\F(X,A)};]
\morphism(1200,1100)/->/<0,-400>[\phantom{C(B,A)}`\phantom{C(X,A)};]
\morphism(1900,1100)/->/<0,-400>[\phantom{\CF(B,A)}`\phantom{\CF(X,A)};]
\morphism(   0,700)/->/<500,0>[0`\phantom{\F(X,A)};]
\morphism( 500,700)/->/<700,0>[\F(X,A)`\phantom{C(X,A)};]
\morphism(1200,700)/->/<700,0>[C(X,A)`\phantom{\CF(X,A)};]
\morphism(1900,700)/->/<500,0>[\CF(X,A)`0;]
\morphism( 500,700)/->/<0,-400>[\phantom{\F(X,A)}`\phantom{\F(X,B)};]
\morphism(1200,700)/->/<0,-400>[\phantom{C(X,A)}`\phantom{C(X,B)};]
\morphism(1900,700)/->/<0,-400>[\phantom{\CF(X,A)}`\phantom{\CF(X,B)};]
\morphism(   0,300)/->/<500,0>[0`\phantom{\F(X,B)};]
\morphism( 500,300)/->/<700,0>[\F(X,B)`\phantom{C(X,B)};]
\morphism(1200,300)/->/<700,0>[C(X,B)`\phantom{\CF(X,B)};]
\morphism(1900,300)/->/<500,0>[\CF(X,B)`0;]
\morphism( 500,300)/->/<0,-300>[\phantom{\F(X,B)}`0;]
\morphism(1200,300)/->/<0,-300>[\phantom{C(X,B)}`0;]
\morphism(1900,300)/->/<0,-300>[\phantom{\CF(X,B)}`0;]
\endxy$$
Furthermore, if $B$ is a~retract of $X$, then all columns split and the~split maps
commute with the~horizontal arrows.
\end{proposition}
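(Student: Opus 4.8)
The~plan is to assemble the~whole diagram out of results already at hand, so that essentially nothing new has to be checked. First I would read off the~three rows: each of them is the~split short exact sequence of Proposition~\ref{prop:split-F0}, applied in turn to the~pairs of multispindles $(B,A)$, $(X,A)$ and $(X,B)$ (here $A\subset B$ automatically makes $A$ a~submultispindle of $B$). Then I would identify the~vertical maps: $C(B,A)\to C(X,A)$ is induced by the~inclusion $B\hookrightarrow X$, $C(X,A)\to C(X,B)$ is the~further quotient, and the~maps of the~left and right columns are obtained from these by restriction, respectively by passing to the~quotient. Every square then commutes because $\F$, $\CF$ and the~sequence~(\ref{eq:seq-subcomplex}) are natural, and the~middle column is exact by Proposition~\ref{prop:compl-split-pairs}.

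The~only step carrying content is the~exactness of the~two outer columns, and my approach is to exploit that the~splitting $C\cong\F\oplus\CF$ of Proposition~\ref{prop:split-F0} is natural in the~pair. By naturality of $\sigma$ and $\pi$, the~two column maps $C(B,A)\to C(X,A)\to C(X,B)$ respect this decomposition, that is, they are block diagonal; hence the~middle column is literally the~direct sum of the~left column and the~right column. Since exactness of a~direct sum of sequences is equivalent to exactness of each summand and the~middle column is exact, the~left and right columns are exact as well. (Equivalently, one could invoke the~$3\times 3$-lemma, the~rows and the~middle column already being known exact.)

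For the~last assertion I would argue that a~multispindle retraction $r\colon X\to B$ induces a~chain map $r_\sharp\colon C(X,A)\to C(B,A)$ which is a~morphism of the~underlying weak simplicial modules, hence carries $\F(X,A)$ into $\F(B,A)$ and descends to $\CF(X,A)\to\CF(B,A)$; since $r|_B=\id_B$ each of these three maps restricts to the~identity on the~$(B,A)$-term, so it splits the~corresponding column, and the~three splittings commute with the~horizontal maps $i$ and $p$ by the~same naturality. I do not anticipate a~real obstacle: the~argument is bookkeeping built on Propositions~\ref{prop:split-F0} and~\ref{prop:compl-split-pairs}, the~single point deserving care being that $\F$ and $\CF$ are functorial on pairs of multispindles and that the~splitting of Proposition~\ref{prop:split-F0} is natural, both of which are already recorded.
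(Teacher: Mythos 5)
Your proposal is correct and follows exactly the route the paper intends: the paper states this proposition without a written proof, presenting it as the combination of Proposition~\ref{prop:split-F0} (the natural split rows) with Proposition~\ref{prop:compl-split-pairs} (the exact middle column and its splitting under a retraction), and your block-diagonal/direct-sum argument for the outer columns is the right way to fill in the one step left implicit. Nothing to correct.
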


The~main results are summarized in the~corollary below.
They are used later in computation of four-term homology of distributive lattices.

\begin{corollary}\label{cor:Ct-F0-split}
If $X$ is a~multispindle, then the~chain complex $C(X)$ decomposes as
\begin{equation}
C(X) \cong C(t) \oplus \F(X,t) \oplus \CF(X,t).
\end{equation}
In particular,
\begin{equation}
\label{eq:H-decomp} H_n(X) \cong H_n(t) \oplus H_n(\F(X,t)) \oplus H_n(\CF(X,t)).
\end{equation}
\end{corollary}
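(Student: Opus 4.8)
The plan is to assemble the decomposition from the two splitting results already established, applied in sequence. First I would take $A=\{t\}$, which is a submultispindle of $X$ because $t\star_r t = t$ for every operation in a multispindle (idempotency). By Proposition~\ref{prop:compl-split}\ref{prop-t-sub} the short exact sequence $0\to C(t)\to C(X)\to C(X,t)\to 0$ splits, giving $C(X)\cong C(t)\oplus C(X,t)$ as chain complexes (not just degreewise, since the retraction $(r_{\{t\}})_\sharp$ is a chain map).

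Next I would split $C(X,t)$ itself. Since $X$ is a multispindle and $\{t\}$ a submultispindle, Proposition~\ref{prop:split-F0} applies with $A=\{t\}$: the sequence $0\to\F(X,t)\to C(X,t)\to\CF(X,t)\to 0$ splits, with retraction $\sigma\pi$ and splitting map $\id-\sigma\pi$. Hence $C(X,t)\cong\F(X,t)\oplus\CF(X,t)$ as chain complexes. Combining the two isomorphisms yields
\[
C(X)\cong C(t)\oplus\F(X,t)\oplus\CF(X,t),
\]
and passing to homology (direct sums of chain complexes induce direct sums of homology) gives the stated formula~(\ref{eq:H-decomp}).

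There is essentially no obstacle here; the work is entirely in checking that the hypotheses of the two cited propositions are met. The only point deserving a word of care is that $\{t\}$ must be a genuine submultispindle, i.e. closed under all $k$ operations — this is exactly where idempotency of every $\star_r$ is used, and it is why the statement is phrased for a multispindle rather than a general multishelf. One could also remark that the resulting decomposition is independent of the choice of $t$: both $C(t)$ and, by the 5-lemma argument noted after Proposition~\ref{prop:compl-split}, the reduced complex $C(X,t)$ are independent of $t$, and the further splitting of $C(X,t)$ into $\F(X,t)\oplus\CF(X,t)$ is natural in the pair $(X,\{t\})$ by Proposition~\ref{prop:split-F0}.
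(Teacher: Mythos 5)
Your proof is correct and matches the paper's intended argument: the corollary is stated there as an immediate consequence of combining Proposition~\ref{prop:split-F0} (applied with $A=\{t\}$) with the splitting $C(X)\cong C(t)\oplus C(X,t)$ from Proposition~\ref{prop:compl-split}\ref{prop-t-sub}, which is exactly the two-step assembly you describe. Your added remarks on why $\{t\}$ is a submultispindle and on independence of $t$ are accurate and consistent with the paper.
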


\subsection{Degenerate and normalized subcomplexes}

For a~quandle $(X,\star)$ and its chain complex $(C^{\R}(X),\partial^{\R})$,
Carter, Kamada and Saito \cite{Car,CJKLS} considered the~degenerate subcomplex
and the~quotient, called a~quandle complex.
Litherland and Nelson \cite{L-N} proved that this complex splits and
their result extends to any~multispindle. Our proof follows that given in \cite{N-P-2}.

\begin{theorem}\label{thm:Norm-split}
Let $X$ be a~multispindle. Then the~short exact sequence of chain complexes
\begin{equation}\label{eq:deg-norm-seq}
0 \to C^D(X) \to^i C(X) \to^\beta C^N(X) \to 0
\end{equation}
splits with a~split map $\alpha\colon C^N(X) \to C(X)$ given by the~formula%
\footnote{
	We use here a~multilinear convention as in \cite{N-P-2}, e.g.
	\begin{align*}
	\alpha(x_0,x_1,x_2)&=(x_0,x_1-x_0,x_2-x_1)=
			(x_0,x_1,x_2)-(x_0,x_0,x_2) - (x_0,x_1,x_1)+ (x_0,x_0,x_1).
	\end{align*}
}
\begin{equation}\label{eq:alpha}
\alpha(x_0,x_1,x_2,...,x_n) = (x_0,x_1-x_0,x_2-x_1,...,x_n-x_{n-1}).
\end{equation}
In particular, $H_n(X) \cong H_n^D(X) \oplus H_n^N(X)$.
\end{theorem}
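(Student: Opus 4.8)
The plan is to verify directly that the map $\alpha$ defined by~(\ref{eq:alpha}) is a chain map that is a left inverse to the quotient projection $\beta\colon C(X)\to C^N(X)$; this immediately splits the sequence~(\ref{eq:deg-norm-seq}) and gives the homology decomposition $H_n(X)\cong H_n^D(X)\oplus H_n^N(X)$. The splitting property is easy: reading $\alpha$ in the multilinear convention of the footnote, every term other than $(x_0,x_1,\dots,x_n)$ itself involves a repeated consecutive pair $x_{i-1}=x_{i-1}$, hence lies in $C^D(X)$, so $\beta\alpha=\id_{C^N(X)}$. The real content is that $\alpha$ commutes with the differential $\partial^{(a_1,\dots,a_k)}=\sum_r a_r\partial^{\star_r}$, equivalently with each face map $d_i=\sum_r a_r d_i^{\star_r}$ after passing to $C^N$; since all modules are free, it then follows that $\sigma\pi$-style arguments are unnecessary and $\alpha$ alone does the job.

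First I would set up notation: write $\alpha(x_0,\dots,x_n)=(x_0,x_1-x_0,\dots,x_n-x_{n-1})$ multilinearly, so that a basis element of $C^N_n$ maps to a signed sum over subsets $S\subseteq\{1,\dots,n\}$ of tuples obtained by replacing $x_i$ with $x_{i-1}$ for $i\in S$. Then I would compute $d_i^{\star_r}\alpha$ and $\alpha d_i^{\star_r}$ (on the normalized complex, i.e.\ modulo degenerates) and check they agree. The key algebraic inputs are: (a) idempotency $x\star_r x = x$, which handles what happens when a face map collides with a repeated pair; (b) self-distributivity and mutual distributivity of the $\star_r$, which govern how $d_i^{\star_r}$ interacts with the substitutions $x_i\mapsto x_{i-1}$; and (c) the fact that any term with a consecutive repeat is killed in $C^N$, so many terms on both sides may be discarded before comparing. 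Concretely, for the face $d_i^{\star_r}$ with $i\ge 1$ one replaces $x_0,\dots,x_{i-1}$ by $x_j\star_r x_i$; the claim is that $\alpha$ intertwines the "take differences" operation with this, because $(x_{j}\star_r x_i)-(x_{j-1}\star_r x_i)$ is, up to degenerate terms, exactly what $\alpha d_i^{\star_r}$ produces, using that $\star_r^{x_i}$ is an inner endomorphism (a shelf homomorphism) and hence is linear-compatible with the difference pattern.

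I would organize the verification in the cleanest available way, namely by following the argument in~\cite{N-P-2} as the authors indicate: establish that $\alpha$ is a well-defined chain map $C^N(X)\to C(X)$ by checking $\partial\alpha=\alpha\partial$ termwise on face maps, using idempotency to dispose of the "diagonal" faces $d_i s_i$-type cancellations and distributivity for the rest, and then conclude splitting from $\beta\alpha=\id$. The main obstacle I anticipate is purely bookkeeping: the substitution $x_i\mapsto x_{i-1}$ hidden inside the multilinear expansion of $\alpha$ interacts with the index-shifting built into $d_i^{\star_r}$, so one must carefully track how a subset $S$ of "differenced" positions transforms under applying $d_i^{\star_r}$ (indices above $i$ shift down, positions at or below $i$ get $\star_r x_i$ applied), and show the resulting signed sums match after deleting degenerate summands. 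This is where self-distributivity is essential — it is exactly what guarantees $(x_j\star_r x_i)\star_r x_\ell = (x_j\star_r x_\ell)\star_r(x_i\star_r x_\ell)$ so that iterated face maps commute with the differencing — and where idempotency cleans up the boundary cases $j=i$, $j=i-1$.
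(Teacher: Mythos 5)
Your overall strategy (exhibit $\alpha$ as a section of $\beta$ that is also a chain map) is the same as the paper's, and your verification of $\beta\alpha=\id_{C^N}$ is fine. The gap is in how you propose to verify the chain-map property. You assert that commuting with $\partial^{(a_1,\dots,a_k)}$ is ``equivalently'' checked on each face map $d_i^{\star_r}$, modulo degenerate terms. This is false: the individual face maps do \emph{not} commute with $\alpha$, not even modulo $C^D$. Already for $n=1$ one has
\begin{equation*}
d_0^{\star}\alpha(x_0,x_1)=d_0^{\star}\bigl((x_0,x_1)-(x_0,x_0)\bigr)=(x_1)-(x_0),
\qquad
\alpha d_0^{\star}(x_0,x_1)=(x_1),
\end{equation*}
and the discrepancy $(x_0)$ lives in $C_0$, where there are no degenerate elements to absorb it; similarly $d_1^{\star}\alpha(x_0,x_1)=(x_0\star x_1)-(x_0)\neq(x_0\star x_1)=\alpha d_1^{\star}(x_0,x_1)$. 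Only the alternating sum commutes: the two error terms above cancel in $d_0^\star\alpha-d_1^\star\alpha$. The proof the paper gives (following [NP-2]) is exactly this telescoping: writing $d_i\alpha=A_i-B_i$, where $A_i$ (resp.\ $B_i$) is $d_i$ applied to the differenced tuple with $x_i$ (resp.\ $x_{i-1}$) inserted in slot $i$, one checks by direct calculation that $B_0=0$, $B_i=\alpha d_{i-1}-A_{i-1}$ for $i>0$, and $A_n=\alpha d_n$; summing with signs $(-1)^i$ collapses everything to $\alpha\partial^{\star}$. Your face-by-face plan cannot be repaired by discarding degenerate terms, for a second reason as well: to split the sequence you need $\alpha\colon C^N(X)\to C(X)$ to satisfy $\partial\alpha=\alpha\partial$ on the nose in $C(X)$; an identity holding only modulo $C^D$ in the target merely restates that $\beta$ is a chain map.

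A smaller point: you present self-distributivity as the essential ingredient of this step. In fact the identity $\alpha\partial^{\star}=\partial^{\star}\alpha$ for a single operation uses only multilinearity and idempotency ($x\star x=x$ is what makes $B_i$ match $\alpha d_{i-1}-A_{i-1}$ at the critical slot); distributivity enters elsewhere, namely in making $\partial^{(a_1,\dots,a_k)}$ a differential at all. Since the multi-term differential is a linear combination of the $\partial^{\star_r}$, it suffices to prove the lemma one operation at a time, which is how the paper proceeds.
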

\begin{proof}
Observe that the~map $\alpha$ is well defined as
$$
\alpha(s_i(x_0,...,x_{n-1}))= (x_0,...,x_i-x_i,...,x_{n-1}) = 0,
$$
so that $\alpha (C_n^D)=0$. We have also $\beta\alpha = \id_{C^N}$,
since $(\alpha\beta - \id)(C_n) \subset C^D_n$ and $\beta (C_n^D)=0$.
This shows that $\alpha$ splits $\{C_n\}$ as a graded group.
It remains to check that $\alpha$ is a chain map and for this
it suffices to prove that $\partial^{\star_r}\alpha = \alpha\partial^{\star_r}$
for any $r$. This follows from Lemma \ref{lem:alpha-chain} below.
\end{proof}

\begin{lemma}\label{lem:alpha-chain}\ 
Let $(C_n,d_i,s_j)$ be a~weak simplicial complex associated to a~spindle
$(X,\star)$. Then $\alpha\partial_n^\star = \partial_n^\star\alpha$ for every $n\geqslant 0$,
where $\alpha$ is defined by formula~(\ref{eq:alpha}).
\end{lemma}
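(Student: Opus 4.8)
The plan is to verify directly that $\alpha$ commutes with each face-induced boundary by comparing $d_i^\star\alpha$ with $\alpha d_i^\star$ term by term. Write $\alpha(x_0,\dots,x_n)=(x_0,\,x_1-x_0,\,\dots,\,x_n-x_{n-1})$ using the multilinear convention, so that $\alpha$ is the linear extension of a map sending a tuple to a formal combination of tuples. The key observation is a change of coordinates: if we set $y_0=x_0$ and $y_j=x_j-x_{j-1}$ for $j\geqslant 1$, then $\alpha$ is the substitution $y_j\mapsto x_j$ in reverse, and it is cleaner to show instead that $\alpha^{-1}\partial^\star_n\alpha=\partial^\star_n$ on the level of the ``difference coordinates''. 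Concretely I would introduce the linear automorphism $\tau\colon C_n\to C_n$, $\tau(x_0,\dots,x_n)=(x_0,x_0+x_1,x_0+x_1+x_2,\dots)$ (partial sums), which is a two-sided inverse of $\alpha$ under the multilinear convention, and reduce the claim to $\tau\partial^\star_n = \partial^\star_n\tau$, or equivalently compute $d_i^\star$ in partial-sum coordinates and match.

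First I would handle $d_0^\star$, which simply deletes $x_0$; after deletion the difference coordinates $x_j-x_{j-1}$ for $j\geqslant 2$ are unchanged, but the new leading entry is $x_1$ whereas $\alpha$ wants $x_1$ as the new $y_0$ — so the $d_0$ case matches on the nose once one tracks that $\alpha(x_1,\dots,x_n)$ has leading term $x_1$ and subsequent terms $x_j-x_{j-1}$. Next, for $d_i^\star$ with $i\geqslant 1$, the face map replaces each $x_\ell$ ($\ell<i$) by $x_\ell\star x_i$ and deletes $x_i$; the crucial algebraic input is self-distributivity and idempotency of $\star$ applied in the multilinear expansion. Specifically, in the difference coordinates the entry $(x_\ell-x_{\ell-1})$ becomes, after applying $d_i^\star$, the difference $(x_\ell\star x_i)-(x_{\ell-1}\star x_i)$, and one must check this equals what $\alpha$ produces from $d_i^\star$ applied to the straight tuple — this is where the weak simplicial identity $d_i s_i=d_{i+1}s_i$ (idempotency) and condition (SM-dd) get used, together with the fact that $\star$ distributes so that $(a\star c)$ behaves linearly enough in the formal sense. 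The entry at position $i$ disappears and the entry at position $i+1$, previously $x_{i+1}-x_i$, becomes $x_{i+1}-(x_i\star x_i)=x_{i+1}-x_i$ by idempotency, matching $\alpha$'s output where the $x_i$-column is removed.

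I expect the main obstacle to be bookkeeping the multilinear expansion cleanly: $\alpha$ is defined on generators but its effect is a signed sum of $2^n$ tuples, and $d_i^\star$ does not act diagonally on this sum, so a naive term-by-term match is delicate. The cleanest route is probably to avoid expanding at all: work purely with the linear maps $\alpha$ and $\tau=\alpha^{-1}$ on $C_n=\Z X^{n+1}$, establish that $\tau$ is genuinely inverse to $\alpha$ (a short computation with the multilinear convention, or an appeal to the triangular unipotent structure), and then reduce everything to checking $\tau d_i^\star\tau^{-1}$ has the expected form. A second, more hands-on option, which is likely what the paper does, is to prove the identity by verifying it on each basis tuple $(x_0,\dots,x_n)$ and carefully collecting terms, using idempotency to kill the degenerate contributions and self-distributivity to rewrite $(x_\ell\star x_i)-(x_{\ell-1}\star x_i)$ terms; I would organize that computation by splitting the sum $\partial^\star_n=\sum(-1)^i d_i^\star$ into the $i=0$ term, the terms $1\leqslant i\leqslant n$, and tracking the three regions $\ell<i$, $\ell=i$, $\ell>i$ separately. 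Either way, once the single-operation identity $\alpha\partial^\star_n=\partial^\star_n\alpha$ is in hand, Theorem~\ref{thm:Norm-split} follows by linearity in the scalars $a_r$ as already indicated in its proof.
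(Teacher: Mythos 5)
There is a genuine gap, in fact two. First, your ``clean route'' via an inverse $\tau$ cannot work: $\alpha$ is not invertible. Under the multilinear convention $\alpha(x,x)=(x,x-x)=0$, so $\alpha$ annihilates every degenerate tuple (this is precisely why it descends to a splitting of $C^N$ in Theorem~\ref{thm:Norm-split}), hence it has a large kernel and no two-sided inverse exists. Concretely, with $\tau(x_0,x_1)=(x_0,x_0+x_1)$ one gets $\tau\alpha(x_0,x_1)=(x_0,x_1)-(x_0,x_0)\neq(x_0,x_1)$. The ``triangular unipotent'' intuition fails because the basis of $C_n$ is $X^{n+1}$, not a module on which $x_j\mapsto x_j-x_{j-1}$ is a coordinate change.

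Second, your fallback plan tries to match $d_i^{\star}\alpha$ with $\alpha d_i^{\star}$ for each $i$ separately, and these are simply not equal. Already for $i=0$: $d_0\alpha(x_0,\dots,x_n)=(x_1-x_0,x_2-x_1,\dots)$ while $\alpha d_0(x_0,\dots,x_n)=(x_1,x_2-x_1,\dots)$; similarly for $i\geqslant 1$ the slot after the deleted one carries $x_{i+1}-x_i$ on one side but $x_{i+1}-x_{i-1}\star x_i$ on the other. The identity holds only for the alternating sum, through a cancellation \emph{across different indices}. The paper's proof isolates exactly this mechanism: split the $i$-th slot of $\alpha(x)$, which equals $x_i-x_{i-1}$, to write $d_i\alpha=A_i-B_i$, then verify $B_0=0$, $B_i=\alpha d_{i-1}-A_{i-1}$ for $i>0$, and $A_n=\alpha d_n$, so the sum $\sum(-1)^i(A_i-B_i)$ telescopes to $\alpha\partial^{\star}$. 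Your plan contains no analogue of this cross-index cancellation, so ``carefully collecting terms'' as described would stall. A smaller point: the identities \ref{SM-dd} and $d_is_i=d_{i+1}s_i$ play no role here; the only algebraic inputs are the explicit formula for $d_i^{\star}$, formal multilinearity, and idempotency of $\star$ (used to simplify $(x_{i-1}-x_{i-2})\star x_{i-1}$), while self-distributivity is needed for $\partial^{\star}\partial^{\star}=0$, not for this lemma.
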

\begin{proof}
Denote halves of $d_i\alpha$ as follows:
\begin{align*}
A_i &:= d_i(x_0,x_1\!-\!x_0,\dots,x_{i-1}\!-\!x_{i-2},\phantom{{}_{-1}}x_i,x_{i+1}\!-\!x_i,\dots,x_n\!-\!x_{n-1}),\\
B_i &:= d_i(x_0,x_1\!-\!x_0,\dots,x_{i-1}\!-\!x_{i-2},x_{i-1},x_{i+1}\!-\!x_i,\dots,x_n\!-\!x_{n-1}),
\end{align*}
so that $d_i\alpha(x_0,\dots,x_n) = A_i-B_i$. Direct calculation gives
\begin{align*}
B_i &= \begin{cases}
	0, & i=0,\\
	\alpha d_{i-1}(x_0,\dots,x_n)-A_{i-1}, & i >0,
\end{cases} \\
A_n &= \alpha d_n(x_0,\dots,x_n),
\end{align*}
and by summing everything up we have $\partial^\star\alpha = \alpha\partial^\star$.
\end{proof}

\section{Computation of multi-term homology}\label{chpt:hom}
Now we are going to compute homology of a~complex defined in the~section~\ref{chpt:complex}.
The~main idea is to use decompositions described previously to compute
homology groups for the~two-element Boolean algebra $B_1$. Then we will show that
homology of any finite distributive lattice can be reduced to those simple pieces.

\subsection{General statements}
Let $(X,\{\star_1,\dots,\star_k\})$ be a~multispindle and choose scalars $a_1,\dots,a_k$.
Recall that $\Sigma$ is the~sum of all $a_i$'s.
By Corollary~\ref{cor:Ct-F0-split} the~multiterm chain complex $C(X)$ decomposes as
\begin{equation}
\label{eq:C-decomp} C(X) = C(\{t\})\oplus \F(X, t)\oplus \CF(X, t).
\end{equation}

\begin{proposition}\label{prop:one-point-hom}
Let $t\in X$. If $\Sigma = 0$, then $H_n(\{t\}) = \Z$ for every $n \geqslant 0$.
Otherwise,
$$
H_n(\{t\})=\begin{cases}
	\Z, & n = 0, \\
	0, & n > 0 \textrm{ and is even},\\
	\Z_{\Sigma}, & n \textrm{ is odd}.
\end{cases}
$$
\end{proposition}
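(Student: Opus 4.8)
The plan is to exploit the fact that the one-point multispindle $\{t\}$ has an extremely small chain complex: compute its differential explicitly and then read off the homology.

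First I would identify $C(\{t\})$. Since $t$ is idempotent for every operation $\star_r$, the set $\{t\}^{n+1}$ has a single element $(t,\dots,t)$, so $C_n(\{t\}) = \Z$ for each $n\geqslant 0$, with generator $g_n := (t,\dots,t)$. Next I would compute the faces: because $t\star_r t = t$ for every $r$, each $d_i^{\star_r}$ carries $g_n$ to $g_{n-1}$, so $d_i(g_n) = \sum_{r=1}^k a_r\,g_{n-1} = \Sigma\,g_{n-1}$ for all $i$ and all $n\geqslant 1$. Consequently
\[
\partial_n \;=\; \sum_{i=0}^n (-1)^i d_i \;=\; \Big(\textstyle\sum_{i=0}^n (-1)^i\Big)\,\Sigma\cdot\id ,
\]
which is $\Sigma\cdot\id$ when $n$ is even and the zero map when $n$ is odd; and $\partial_0 = 0$ trivially, as the complex is non-augmented. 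Thus $C(\{t\})$ is the complex $\cdots\xrightarrow{0}\Z\xrightarrow{\Sigma}\Z\xrightarrow{0}\Z\xrightarrow{\Sigma}\Z\xrightarrow{0}\Z\to 0$.

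Then I would simply compute homology of this complex. If $\Sigma = 0$, every differential vanishes, so $H_n(\{t\}) = \Z$ for all $n$, which is the first case. If $\Sigma\neq 0$, then in degree $0$ one gets $\ker\partial_0/\im\partial_1 = \Z/0 = \Z$; in each odd degree $n$ one gets $\ker\partial_n/\im\partial_{n+1} = \Z/\Sigma\Z = \Z_\Sigma$ (here $\partial_n = 0$ and $\partial_{n+1} = \Sigma\cdot\id$); and in each positive even degree $n$ one gets $\ker\partial_n/\im\partial_{n+1} = 0/0 = 0$, because multiplication by $\Sigma$ is injective on $\Z$. This is exactly the asserted formula.

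I do not expect any genuine obstacle here: the statement is a direct computation. The only points requiring a little care are the non-augmented convention, which forces $\partial_0 = 0$, and the case split $\Sigma = 0$ versus $\Sigma\neq 0$, which is already reflected in the statement.
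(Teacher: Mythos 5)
Your proposal is correct and follows essentially the same route as the paper: both compute that each face map sends the generator $(t,\dots,t)$ to $\Sigma\cdot(t,\dots,t)$, so that $\partial_n$ is $\Sigma\cdot\id$ for $n$ even and $0$ for $n$ odd, and then read off the homology. Your write-up is just a more detailed version of the paper's argument, including the correct handling of $\partial_0=0$ in the non-augmented convention.
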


\noindent Form this proposition one can immediately deduce augmented homology groups
--- the only difference is in degree 0, in which $\tilde H_0(\{t\}) = 0$.

\begin{proof}
Because all operations are idempotent, we have
$$
d_i(t,\dots,t) = \sum_{r=0}^k a_r d_i^{\star_r}(t,\dots,t) = \Sigma\cdot (t,\dots,t)
$$
what gives
$$
\partial_n(t,\dots,t) = \begin{cases}
	\Sigma\cdot (t,\dots,t), & n \textrm{ is even}\\
	0, & n \textrm{ is odd}
\end{cases}
$$
and the~proposition follows.
\end{proof}

Homology of the~second factor in~(\ref{eq:C-decomp}) can be computed recursively.
Take a~map $\sigma\colon C_\bullet(X,t)\to C_{\bullet+1}(X,t)$ defined
in section~\ref{sec:init-deg}. By Corollary~\ref{cor:split-F0} it induces an~isomorphism
$H_{n+1}(\F(X,t))\cong H_n(X,t)$
if $\Sigma = 0$. Otherwise, we have to work a~bit harder.

\begin{proposition}\label{prop:F0-hom}
Let $X$ be a~finite multispindle with $t\in X$. Assume $\Sigma = 0$ or the~free
part of $H_n(X,t)$ is trivial for every $n\geqslant 0$. Then
\begin{equation}
	H_{n+1}(\F(X,t))\cong H_n(X,t)\otimes\Z_\Sigma
\end{equation}
for every $n\geqslant 0$. On the~other hand, if $\Sigma\neq 0$ and
$H_n(\CF(X,t))$ is a~free group for every $n\geqslant 0$, then
\begin{equation}
	H_{n+1}(\F(X,t))\cong H_{n-1}(\F(X,t))\ \oplus\ H_n(\CF(X,t))\otimes\Z_\Sigma
\end{equation}
for every $n\geqslant 0$.
\end{proposition}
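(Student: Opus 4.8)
The plan is to extract both isomorphisms from the short exact sequence
\[
0 \to \F(X,t) \to C(X,t) \to \CF(X,t) \to 0
\]
of Proposition~\ref{prop:split-F0}, combined with the chain homotopy of Lemma~\ref{lem:s0-htpy}, which says that $\Sigma\cdot\sigma\pi$ is chain homotopic to zero and hence that $\Sigma$ kills $H_n(\F(X,t))$ (Corollary~\ref{cor:split-F0}\ref{cor-Sigma-anih}). The first step is to understand the map $\pi\colon C(X,t)\to \F(X,t)$ (or rather $\sigma\pi$) well enough to see what it induces on homology: since $\sigma\pi$ is a retraction of $C(X,t)$ onto $\F(X,t)$ in the split sequence of Proposition~\ref{prop:split-F0}, and since $\sigma\colon C_\bullet(X,t)\to \F_{\bullet+1}(X,t)$ is the relevant degree-shifting map, I would first establish a short exact sequence at the level of chains relating $\F_{\bullet+1}(X,t)$, $C_\bullet(X,t)$, and $\CF_\bullet(X,t)$ via $\sigma$ and $\pi$, up to the factor $\Sigma$. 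Concretely, $\sigma$ identifies $C_\bullet(X,t)$ with a direct summand of $\F_{\bullet+1}(X,t)$ (this is essentially Corollary~\ref{cor:split-F0}\ref{cor-C-F0-Sigma-isom} over $\Z_\Sigma$), and the complementary summand should be identified with a shifted copy of $\CF(X,t)$; tracking the boundary map through this identification produces a long exact sequence.

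For the first isomorphism, assume either $\Sigma=0$ or that $H_n(X,t)$ has no free part for all $n$. If $\Sigma=0$ this is immediate from Corollary~\ref{cor:split-F0}\ref{cor-C-F0-Sigma-isom}, since then $\Z_\Sigma=\Z$ and $H_{n+1}(\F(X,t))\cong H_n(X,t) = H_n(X,t)\otimes\Z$. If $\Sigma\neq 0$, the idea is that the chain map $\sigma\colon C_\bullet(X,t)\to \F_{\bullet+1}(X,t)$ (which is a chain map up to the scalar $\Sigma$, by Lemma~\ref{lem:s0-htpy}) together with $\pi$ witnesses that $\F_{\bullet+1}(X,t)$ is, up to a factor of $\Sigma$, a copy of $C_\bullet(X,t)$; passing to homology and using the universal coefficient / Bockstein-type argument — together with the hypothesis that $H_n(X,t)$ is all torsion, so that $\Tor$ terms vanish appropriately or combine cleanly — yields $H_{n+1}(\F(X,t))\cong H_n(X,t)\otimes\Z_\Sigma$. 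The role of the "no free part" hypothesis is to guarantee that multiplication by $\Sigma$ on $H_n(X,t)$, composed with the maps above, produces exactly the tensor product and no leftover free summand.

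For the second isomorphism, assume $\Sigma\neq 0$ and that each $H_n(\CF(X,t))$ is free. Here I would feed the decomposition $C(X,t)\cong \F(X,t)\oplus\CF(X,t)$ of Proposition~\ref{prop:split-F0} into the first isomorphism's mechanism: since $H_n(X,t)\cong H_n(\F(X,t))\oplus H_n(\CF(X,t))$, and the map $\sigma$ shifts degree by one, one gets a recursive relation in which $H_{n+1}(\F(X,t))$ receives a contribution $H_{n-1}(\F(X,t))$ from iterating $\sigma$ twice (the "$\F$ feeding back into $\F$" part) plus a contribution $H_n(\CF(X,t))\otimes\Z_\Sigma$ from the normalized summand, where the freeness of $H_n(\CF(X,t))$ makes the $\Tor$ term vanish so that the tensor factor is clean and the sequence splits. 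I expect the main obstacle to be the careful bookkeeping of the long exact sequence associated to $\sigma$ and $\pi$ when $\Sigma\neq 0$: one must show the connecting homomorphisms are exactly multiplication by $\Sigma$ (so the cokernel/kernel give tensor and $\Tor$ with $\Z_\Sigma$), and one must verify that the relevant extensions split — which is where the freeness hypotheses on $H_n(\CF(X,t))$, respectively the torsion hypothesis on $H_n(X,t)$, are used. Keeping the degree shifts straight between $\F$, $\CF$, and $C(X,t)$, and handling the parity issues coming from the sign $(-1)^{n+1}$ in the definition of $\sigma$, will be the fiddliest part.
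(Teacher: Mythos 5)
Your proposal follows essentially the same route as the paper's proof: the mod-$\Sigma$ chain isomorphism $C_\bullet(X,t;\Z_\Sigma)\cong\F_{\bullet+1}(X,t;\Z_\Sigma)$ of Corollary~\ref{cor:split-F0}, the Universal Coefficient Theorem applied to both sides, the fact that $\Sigma$ annihilates $H_*(\F(X,t))$, and the free-part/torsion hypotheses to identify the $\mathrm{Tor}$ terms, together with the splitting $H_n(X,t)\cong H_n(\F(X,t))\oplus H_n(\CF(X,t))$. The one step you leave implicit, and which the paper makes explicit, is that the resulting relation $H_{n-1}(X,t)\otimes\Z_\Sigma\oplus H_n(X,t)\otimes\Z_\Sigma\cong H_n(\F(X,t))\oplus H_{n+1}(\F(X,t))$ is unwound by induction on $n$ together with cancellation of finitely generated abelian groups (this is where finiteness of $X$ is used), rather than by analyzing a splitting of extensions in a Bockstein sequence.
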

\begin{proof}
The~case $\Sigma=0$ is discussed above. For $\Sigma\neq 0$
the~Universal Coefficient Theorem implies that
\begin{align*}
H_n(X,t;\Z_\Sigma)&\cong Tor(H_{n-1}(X,t),\Z_\Sigma) \oplus H_n(X,t)\otimes\Z_\Sigma,\\
H_{n+1}\F(X,t;\mathbb{Z}_\Sigma)&\cong Tor(H_n\F(X,t),\Z_\Sigma)\oplus H_{n+1}\F(X,t)\otimes\Z_\Sigma.
\end{align*}
The groups on the~left-hand side are isomorphic and
Corollary~\ref{cor:split-F0} implies that $\Sigma$ annihilates
each $H_n\F(X,t)$, so that
$$
Tor(H_n\F(X,t),\Z_\Sigma)\cong H_n\F(X,t)\otimes\Z_\Sigma.
$$
If homology groups $H_n(X,t)$ have no free part, then
$$
Tor(H_n(X,t),\Z_\Sigma)\cong H_n(X,t)\otimes\Z_\Sigma
$$
and in case $H_n(\CF(X,t))$ is free, we have
$$
Tor(H_n(X,t),\Z_\Sigma)\cong Tor(H_n\F(X,t),\Z_\Sigma)\cong H_n\F(X,t).
$$
Putting everything together, in the~first case we obtain an isomorphism
$$
H_{n-1}(X,t)\otimes\Z_\Sigma \oplus H_n(X,t)\otimes\Z_\Sigma
\cong
H_n\F(X,t) \oplus H_{n+1}\F(X,t)
$$
and a~simple inductive argument proves the~thesis, because
all groups are finite (hence, finitely generated).
For the~other case
$$
H_{n-1}\F(X,t) \oplus H_n(X,t)\otimes\Z_\Sigma
\cong
H_n\F(X,t) \oplus H_{n+1}\F(X,t).
$$
and since $H_n(X,t)\cong H_n(\CF(X,t))\oplus H_n(\F(X,t))$
a~simple cancellation is all we need (again, all groups are finitely generated).
\end{proof}

We ends this section with one more remark for general complexes
that will be used a~few times in the~next section.

\begin{lemma}\label{lem:diff-qdiff}
Let $(C,\partial)$ be a~chain complex of finitely generated free abelian groups
with $n$-th homology group equal to
\begin{equation}
H_n(C,\partial)=\Z^k\oplus\Z_{m_1}\oplus\dots\oplus\Z_{m_r}.
\end{equation}
Then for any positive integer $q$ the $n$-th homology group of $(C,q\partial)$ is equal to
\begin{equation}\label{eq:H-gdiff}
H_n(C,q\partial)=\Z^k\oplus\Z_{qm_1}\oplus\dots\oplus\Z_{qm_r}\oplus\Z_q^{t_n-k-r}
\end{equation}
where the~sequence $t_n$ is given by a~recursive formula
\begin{equation}\label{eq:H-qdiff-1's}
t_{n+1} + t_n = \rk C_{n+1} + \rk H_n(C,\partial)
\end{equation}
with $t_0 = \rk C_0$.
\end{lemma}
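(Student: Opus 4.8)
The plan is to compute the effect of multiplying a differential by a scalar $q$ by diagonalizing $\partial$ via Smith normal form. First I would choose, for each $n$, bases of $C_n$ and $C_{n-1}$ in which the matrix of $\partial_n$ is diagonal with entries $d^{(n)}_1,\dots,d^{(n)}_{\rho_n}$ (the nonzero elementary divisors, $\rho_n = \rk\partial_n$) followed by zeros. Since $C$ is a complex of finitely generated free abelian groups, such bases exist, and the classical description of homology reads: the free rank of $H_n$ is $\rk C_n - \rho_n - \rho_{n+1}$, and the torsion of $H_n$ consists of the cyclic groups $\Z_{d}$ for those elementary divisors $d$ of $\partial_{n+1}$ with $d>1$. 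Comparing with the given presentation $H_n(C,\partial)=\Z^k\oplus\Z_{m_1}\oplus\dots\oplus\Z_{m_r}$, the elementary divisors of $\partial_{n+1}$ that exceed $1$ are exactly $m_1,\dots,m_r$, while $\rho_{n+1}$ counts all nonzero elementary divisors, i.e. the $m_i$'s plus however many equal $1$.

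Next I would observe that the matrix of $q\partial_n$ in the same bases is diagonal with entries $qd^{(n)}_i$, so $q\partial$ has exactly the same rank $\rho_n$ in each degree; hence the free rank of $H_n(C,q\partial)$ is again $\rk C_n - \rho_n - \rho_{n+1} = k$, matching the formula. The torsion of $H_n(C,q\partial)$ comes from the elementary divisors of $q\partial_{n+1}$ that exceed $1$: each original divisor $m_i>1$ becomes $qm_i$, contributing $\Z_{qm_i}$, and each original divisor equal to $1$ becomes $q$, contributing a copy of $\Z_q$ whenever $q>1$. The number of elementary divisors of $\partial_{n+1}$ equal to $1$ is therefore the quantity I must identify with $t_{n+1}-k-r$.

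It remains to pin down that count. Set $t_n := \rho_n + \rk H_n(C,\partial)$; equivalently $t_n = \rho_n + (\rk C_n - \rho_n - \rho_{n+1}) = \rk C_n - \rho_{n+1}$, which gives at once the recursion $t_{n+1}+t_n = \rk C_{n+1} + (\rk C_n - \rho_{n+1}) = \rk C_{n+1} + \rk H_n(C,\partial)$, and $t_0 = \rk C_0 - \rho_1$; but $\rho_1 = 0$ since... — here one must be slightly careful: the stated base case is $t_0=\rk C_0$, which is consistent provided one reads $\rho_1$ as the rank of $\partial_1\colon C_1\to C_0$, and indeed the recursion with $t_0 = \rk C_0$ then forces $t_1 = \rk C_1 + \rk H_0 - \rk C_0 = \rk C_1 - \rho_1$, propagating correctly. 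With $t_{n+1} = \rho_{n+1}+\rk H_{n+1}(C,\partial)$ in hand, the number of elementary divisors of $\partial_{n+1}$ equal to $1$ is $\rho_{n+1}$ minus the number exceeding $1$, namely $\rho_{n+1} - r = t_{n+1} - \rk H_{n+1} - r$. Here I would note that for the degree under consideration $\rk H_{n+1}(C,\partial)$ equals the free rank $k$ of $H_n(C,q\partial)$ — this is the only mildly delicate bookkeeping point, since $k$ in \eqref{eq:H-gdiff} is the free rank of $H_n$, and one checks $\rk H_n(C,q\partial)=\rk H_n(C,\partial)$ from the rank computation above, so the $k$ appearing in the exponent $t_n - k - r$ is consistent once indices are aligned as in the statement. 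Assembling the pieces yields \eqref{eq:H-gdiff}.

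The main obstacle is purely notational rather than mathematical: keeping straight which homological degree indexes $\partial_{n+1}$ versus $H_n$ versus $t_n$, so that the exponent $t_n-k-r$ genuinely counts the unit elementary divisors of $\partial_{n+1}$. Once the dictionary "$t_n = \rk C_n - \rk \partial_{n+1}$" is established, both the recursion and the base case are immediate, and the rest is the standard Smith-normal-form computation of homology, unaffected by scaling the differential except that each elementary divisor $d$ is replaced by $qd$.
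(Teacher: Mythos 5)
Your overall strategy --- put $\partial_{n+1}$ in Smith normal form, observe that scaling by $q$ multiplies every elementary divisor by $q$, and count the unit elementary divisors --- is exactly the paper's; the paper merely views $\partial_{n+1}$ as a map into $\ker\partial_n$ rather than into all of $C_n$, which is harmless since $\ker\partial_n$ is a direct summand of $C_n$ with free complement $\im\partial_n$. The Smith-normal-form part of your argument is correct. The problem is in the identification of the exponent with $t_n-k-r$, which is the actual content of the lemma: you flag this as ``mildly delicate'' and then resolve it incorrectly.

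Concretely, the number of unit elementary divisors of $\partial_{n+1}\colon C_{n+1}\to C_n$ is $\rho_{n+1}-r=\rk\im\partial_{n+1}-r$, and since $k=\rk H_n=\rk\ker\partial_n-\rk\im\partial_{n+1}$, this equals $\rk\ker\partial_n-k-r$. So the sequence the statement forces is $t_n=\rk\ker\partial_n=\rk C_n-\rho_n=\rho_{n+1}+\rk H_n$. You instead set $t_n:=\rho_n+\rk H_n=\rk C_n-\rho_{n+1}$, which is a different sequence (the two agree only when $\rho_n=\rho_{n+1}$). With your $t_n$ the recursion fails: your check $t_{n+1}+t_n=\rk C_{n+1}+(\rk C_n-\rho_{n+1})$ silently drops the term $-\rho_{n+2}$ contributed by $t_{n+1}$, and the base case fails as well, since your $t_0=\rk C_0-\rho_1$ is not $\rk C_0$ in general (the correct $t_0=\rk\ker\partial_0=\rk C_0$ because $\partial_0=0$). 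Your later patch --- asserting that $\rk H_{n+1}(C,\partial)$ equals the free rank $k$ of $H_n(C,q\partial)$ --- is false in general: that free rank is $\rk H_n(C,\partial)$, and there is no reason for $\rk H_{n+1}$ to equal $\rk H_n$. The fix is one line: take $t_n=\rk\ker\partial_n$. Then $t_{n+1}+t_n=(\rk C_{n+1}-\rho_{n+1})+(\rk C_n-\rho_n)=\rk C_{n+1}+\rk H_n(C,\partial)$, the base case holds, and $t_n-k-r=\rho_{n+1}-r$ is exactly the count you computed. With that substitution your proof closes and coincides with the paper's.
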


\noindent The~factors $\Z_q$ in (\ref{eq:H-gdiff}) can be seen as arising from trivial
factors $\Z_1 = 0$.

\begin{proof}
Because all chain groups are free, $q\partial$ has the~same kernel as $\partial$.
On the~other hand, the~image is ``multiplied by $q$''. Formally, denote by $M$
a~matrix representing
$
\partial_{n+1}\colon C_{n+1}\to \ker\partial_n
$
and by $M'$ the~analogous matrix for $q\partial_{n+1}$.
Then clearly $M' = q\cdot M$ and when we diagonalize both matrices simultaneously,
entries on the~diagonal of $M'$ are equal to the~entries of $M$ multiplied by $q$.
Since $M$ is a~presentation matrix for $H_n(C,\partial)$
and $M'$ for $H_n(C,q\partial)$, it remains to find the~number of $1$'s in $M$.
This is equal to $\rk(\ker\partial_n)-k-r$ and from linear algebra we know that
$$
\rk(\ker\partial_n) +\rk(\ker\partial_{n-1}) = \rk C_n +\rk H_{n-1}
$$
what proves the~formula (\ref{eq:H-qdiff-1's}) and gives the~thesis.
\end{proof}

\subsection{Computation for the~Boolean algebra \texorpdfstring{$B_1$}{B1}}\label{sec:hom-of-B1}
Now we go back to distributive lattices. Here we present computation
for the~two-element Boolean algebra, which is the~basic piece. By the~previous part
it remains to show that reduced homology groups are finite and
to compute homology for the~third piece of (\ref{eq:C-decomp}).

For generic scalars $a,b,c$ and $d$, homology of any lattice
are purely torsion due to the~following.

\begin{proposition}\label{prop:ac-ab-ann-H}
Let $(C,\partial)$ be a~four-term chain complex associated to
a~distributive lattice $L$ for scalars $(a,b,c,d)$. Then the~reduced
homology groups $H(L,t)$ are annihilated by $\gcd(a+b,a+c)$.
\end{proposition}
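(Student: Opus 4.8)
The plan is to exhibit, for each self-distributive operation in play, a chain homotopy on the reduced complex, so that a suitable combination of scalars acts as zero on homology. The two relevant operations here are $\lor$ and $\land$; the trivial operations $\lt$ and $\rt$ contribute the degenerate/initial pieces but not new homotopies. The key observation is that for a \emph{commutative} idempotent operation $\star$ (which both $\lor$ and $\land$ are) one can write down an explicit contracting-type homotopy on $C^\star(L)$ using a fixed element $t$: define $h_n\colon C_n(L)\to C_{n+1}(L)$ by $h_n(x_0,\dots,x_n) = (-1)^{?}(x_0\star t,\dots,x_n\star t, t)$ or a similar insertion/absorption formula, and compute $\partial^\star h + h\partial^\star$. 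Using idempotency $t\star t=t$, commutativity, and self-distributivity one gets that this combination equals the identity minus the chain map induced by the inner endomorphism $x\mapsto x\star t$, which on the reduced complex $C(L,t)$ is null-homotopic to a constant.

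First I would work on $C(L,t)$ rather than $C(L)$, using Corollary~\ref{cor:Ct-F0-split} to discard the point summand. Then, restricting attention to the two-term differential $b\partial^{\lor}+c\partial^{\land}$ (the $a$ and $d$ terms being handled by the initial-degenerate splitting of Proposition~\ref{prop:split-F0}, which is exactly why the answer involves $a+b$ and $a+c$ rather than $b$ and $c$ alone), I would produce for each of $\lor$ and $\land$ the homotopy above. The upshot is relations of the form ``$(a+b)$ annihilates the $\lor$-part'' and ``$(a+c)$ annihilates the $\land$-part'' on reduced homology; combining them (the homology being a quotient/subquotient where both relations hold simultaneously, or using that the differential is the sum) forces $\gcd(a+b,a+c)$ to annihilate $H(L,t)$. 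The role of the lattice axioms, beyond idempotency and commutativity of each operation, is the absorption law, which guarantees (as recorded before Section~\ref{chpt:complex}) that the mixed compositions collapse to $\rt$, so that the two homotopies interact correctly rather than producing error terms in the other operation.

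The main obstacle I anticipate is getting the homotopy formula exactly right so that $\partial h + h\partial$ produces a \emph{scalar multiple of the identity} (namely multiplication by $a+b$, resp. $a+c$) on the reduced complex, rather than identity minus an idempotent chain map whose image is not obviously zero in reduced homology. Concretely: the inner endomorphism $r_t(x)=x\land t$ (or $x\lor t$) is a retraction of $L$ onto the principal ideal $\downarrow\!t$ (resp. filter), and $(r_t)_\sharp$ is chain-homotopic to the constant map at $t$ only after passing to the reduced complex; one must check that the constant contribution is precisely the degenerate/point part already split off, so that nothing survives. I expect this bookkeeping — separating the ``$\Sigma$-multiple of identity'' part (which via Proposition~\ref{prop:one-point-hom}--\ref{prop:F0-hom} accounts for the point and $\F$ summands) from the genuinely reduced part — to be the delicate step, while the algebraic verification of $\partial^\star h + h\partial^\star$ is a routine, if tedious, computation of the kind already relegated to exercises in Proposition~\ref{prop:multi-term-is-weak-simpl} and Lemma~\ref{lem:alpha-chain}.
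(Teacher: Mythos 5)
There is a genuine gap: you have the right general tool (a chain homotopy given by appending an element, with $\partial h+h\partial$ computed via the extra face), but you never identify \emph{which} element to append, and that choice is the whole content of the proof. If you append a generic $t$ (or use $x_i\mapsto x_i\star t$ followed by appending $t$), the homotopy identity you get on $C(L,t)$ is
$\partial h^{t}+h^{t}\partial = a\,\id + b\,(\cdot\lor t)_\sharp + c\,(\cdot\land t)_\sharp$,
i.e.\ identity minus (a combination of) inner endomorphisms --- exactly the obstacle you flag in your last paragraph and do not resolve. (That identity is useful, but it proves Lemma~\ref{lem:H-split}, not this proposition.) The paper's resolution is to append the two extreme elements $\top$ and $\bot$: with $h^{\top}(x_0,\dots,x_n)=(-1)^{n+1}(x_0,\dots,x_n,\top)$ the face $d_{n+1}$ contributes $a\,\id$ from $\lt$, $c\,\id$ from $\land$ (since $x\land\top=x$), and the \emph{constant} chain $(\top,\dots,\top)$ from $\lor$ and $\rt$, which vanishes on the reduced complex because the generators are $(x_0,\dots,x_n)-(t,\dots,t)$; hence $\partial h^{\top}+h^{\top}\partial=(a+c)\id$ on $C(L,t)$, and dually $(a+b)\id$ for $h^{\bot}$. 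No commutativity, self-distributivity of the homotopy formula, or interaction between the two homotopies is needed --- each works separately, and the $\gcd$ statement is then immediate from the two annihilation statements. (For a non-complete $L$ one replaces $\top,\bot$ by the join and meet of the finitely many elements supporting a given cycle, as the paper does.)

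A second concrete error: your claim that ``the $a$ and $d$ terms are handled by the initial-degenerate splitting of Proposition~\ref{prop:split-F0}, which is exactly why the answer involves $a+b$ and $a+c$'' is wrong. The splitting off of $\F(L,t)$ concerns the degeneracy $s_0$ and the face $d_0$ (prepending a repeated first coordinate) and has nothing to do with why $a$ appears here. The $a$ in $a+b$ and $a+c$ comes from the $\lt$-part of the last face $d_{n+1}$ acting on the appended coordinate, i.e.\ from the full four-term differential applied to $h^{\bot}$ and $h^{\top}$; you cannot discard $\partial^{\lt}$ and work with $b\partial^{\lor}+c\partial^{\land}$ alone and still recover the correct scalars.
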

\begin{proof}
Assume first that $L$ is complete and consider the~following homotopies:
\begin{align*}
h^\bot(x_0,\dots,x_n) &:= (-1)^{n+1}(x_0,\dots,x_n,\bot),\\
h^\top(x_0,\dots,x_n) &:= (-1)^{n+1}(x_0,\dots,x_n,\top).
\end{align*}
For a~fixed $t\in L$, according to Proposition~\ref{prop:compl-split}\ref{prop-t-sub},
$C(L,t)$ as a~subcomplex of $C(L)$ is generated by elements $(x_0,\dots,x_n)-(t,\dots,t)$.
Therefore, when restricted to $C(L,t)$:
\begin{align*}
\partial h^\bot + h^\bot\partial &= (a+b)\id, \\
\partial h^\top + h^\top\partial &= (a+c)\id.
\end{align*}
Hence, both $(a+b)$ and $(a+c)$ annihilates $H(L,t)$.

For a~general case, notice that any chain $w\in C(L,t)$
is finitely supported, i.e. it can be expressed using only
finitely many elements $x_0,\dots,x_k\in L$. Assuming $w$ represents
a~class $[w]\in H(L,t)$, replace $\top$ with $x_0\lor\cdots\lor x_k$
and $\bot$ with $x_0\land\cdots\land x_k$ in the~above
to see that both $(a+b)$ and $(a+c)$ annihilates $[w]$.
\end{proof}

Since now we focus on the~Boolean algebra $B_1$. For the~rest of this
part $(C,\partial)$ denotes the~four-term chain complex associated to $B_1$
for scalars $(a,b,c,d)$.
Notice that $\partial^{\rt}$ is zero on $\CF(B_1,\bot)$ and is of no concern for us.
We will prove first that $\partial$ is divisible by $\gcd(a+b,a+c)$
and that after dividing by this number the~complex is acyclic.

\begin{lemma}\label{lem:C-F0-red}
Let $(C,\partial)$ be the~four-term chain complex for $(B_1,\bot)$ with scalars
$(a,b,c,d)$ and put $g=\gcd(a+b,a+c)$. Then if $g=0$, the~differential $\partial$
vanishes on $\CF(B_1,\bot)$. Otherwise,
\begin{enumerate}
\item $g$ divides the~differential $\partial$ restricted to $\CF(B_1,\bot)$,
\item the~complex $(\CF(B_1,\bot),\partial')$ is acyclic, where $g\partial'=\partial$.
\end{enumerate}
\end{lemma}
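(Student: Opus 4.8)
The plan is to work with an explicit basis of $\CF(B_1,\bot)$ and write down the differential as a small family of matrices, then perform the divisibility and acyclicity analysis by hand. First I would recall that $B_1 = \{\bot,\top\}$, so $C_n(B_1) = \Z\langle (x_0,\dots,x_n) : x_i\in\{\bot,\top\}\rangle$ has rank $2^{n+1}$. Passing to the reduced complex $C(B_1,\bot)$ kills the single generator $(\bot,\dots,\bot)$, and passing further to $\CF(B_1,\bot) = C(B_1,\bot)/\F(B_1,\bot)$ kills everything in the image of $s_0$, i.e.\ every tuple whose first two coordinates agree. Hence a basis of $\CF_n(B_1,\bot)$ is given by the tuples $(x_0,\dots,x_n)$ with $x_0\neq x_1$; there are $2^n$ of these, and since $x_0\neq x_1$ forces $\{x_0,x_1\}=\{\bot,\top\}$, each such basis element is determined by the ordered pair $(x_0,x_1)$ (two choices) and the tail $(x_2,\dots,x_n)$ ($2^{n-1}$ choices). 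I would then compute $\partial = a\partial^{\lt} + b\partial^{\lor} + c\partial^{\land} + d\partial^{\rt}$ on these generators, using that $\partial^{\rt}$ vanishes on $\CF$ (as already noted in the excerpt) and that idempotency plus the structure of $B_1$ makes each face map $d_i^{\star}$ act very explicitly: $d_0$ drops $x_0$; $d_i$ for $i\geq 1$ replaces each $x_j$ with $j<i$ by $x_j\star x_i$ and drops $x_i$. The key simplification is that on $B_1$ the meet $\bot\land\top=\bot$ and join $\bot\lor\top=\top$, so $d_i^{\star}$ either preserves a coordinate or collapses it to the ``extreme'' value, and one can see directly that the result of $d_i$ either lands in $\F$ (when the first two surviving coordinates coincide) or contributes a $\pm 1$ coefficient to a single basis element. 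This reduces $\partial$ to an integer matrix whose entries are sums of subsets of $\{a,b,c\}$.

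The next step is the divisibility claim (1). Having the matrix, I would argue that whenever a basis element $(x_0,x_1,\dots)$ with $x_0\neq x_1$ survives under some $d_i$ to another surviving element, the coefficient it picks up is one of $a+b$, $a+c$, $-(a+b)$, $-(a+c)$, or their alternating-sum combinations — the point being that the only faces that keep the first two coordinates distinct are $d_0$ (coefficient the full $\Sigma'=a+b+c$ restricted appropriately, but $d_0$ drops $x_0$ so $x_1$ becomes the new head and the constraint shifts) together with the faces acting on the tail. A cleaner route is to invoke Proposition~\ref{prop:ac-ab-ann-H} together with the homotopies $h^\bot, h^\top$ already constructed there: on $B_1$ those homotopies restrict to $\CF(B_1,\bot)$ (since $s_0$-images are preserved by appending a constant last coordinate, $\F$ is carried to $\F$, so $h^\bot,h^\top$ descend to the quotient), and the identities $\partial h^\bot+h^\bot\partial=(a+b)\id$ and $\partial h^\top+h^\top\partial=(a+c)\id$ hold on $\CF(B_1,\bot)$. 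From these, $g=\gcd(a+b,a+c)$ annihilates $H(\CF(B_1,\bot))$; to upgrade ``$g$ annihilates homology'' to ``$g$ divides $\partial$'' I would use the explicit matrix description to check that every entry of $\partial$ is an integer combination of $a+b$ and $a+c$, which I expect to fall out once the faces are written down, since each nonzero entry is visibly of the form $\pm(a+b)$ or $\pm(a+c)$ after the collapses on $B_1$ are accounted for.

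For acyclicity of $(\CF(B_1,\bot),\partial')$ with $g\partial'=\partial$: once divisibility is established, I would rescale the homotopies. Writing $a+b = g\mu$ and $a+c = g\nu$ with $\gcd(\mu,\nu)=1$, the relations become $\partial' h^\bot + h^\bot\partial' = \mu\id$ and $\partial' h^\top + h^\top\partial' = \nu\id$ on $\CF(B_1,\bot)$ (dividing the previous identities by $g$, which is legitimate since all chain groups are free and $\partial=g\partial'$). Choosing integers $s,t$ with $s\mu+t\nu=1$, the combined map $H := s\,h^\bot + t\,h^\top$ satisfies $\partial' H + H\partial' = \id$, so $\partial'$ is chain-contractible and $H(\CF(B_1,\bot),\partial')=0$. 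The case $g=0$ is immediate: then $a+b=a+c=0$, so the two homotopy identities read $\partial h^\bot + h^\bot\partial = 0$ and $\partial h^\top+h^\top\partial=0$ on $\CF(B_1,\bot)$, which alone don't force $\partial=0$, so here I'd instead fall back on the explicit matrix: with $a+b=a+c=0$ every entry (being a $\pm(a+b)$ or $\pm(a+c)$) vanishes, giving $\partial\equiv 0$ on $\CF(B_1,\bot)$. The main obstacle I anticipate is the bookkeeping in step one: verifying that, after the $B_1$-specific collapses, each face map really does send a distinct-head generator either into $\F$ or to a single distinct-head generator with coefficient exactly $\pm(a+b)$ or $\pm(a+c)$ — this is where a careless sign or a missed degenerate term would break both the divisibility and the contraction, so I would organize it as a short case analysis on $i$ (namely $i=0$, $i=1$, and $i\geq 2$) and on the values of $x_0,x_1,x_i$.
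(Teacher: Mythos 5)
Your proposal is correct and follows essentially the same route as the paper: the divisibility in (1) comes from the observation that on $B_1$ each face $d_i^{\lor}$ or $d_i^{\land}$ either coincides with $d_i^{\lt}$ or lands in $\F(B_1,\bot)$ (with $d_0$ and $d_1$ combining to give the $(a+b)$ or $(a+c)$ coefficient), and (2) comes from rescaling the homotopies $h^\bot,h^\top$ of Proposition~\ref{prop:ac-ab-ann-H} by $g$ so that the resulting multiples of the identity are coprime. Your explicit contracting homotopy $s\,h^\bot+t\,h^\top$ and your remark that $h^\bot,h^\top$ preserve $\F$ (hence descend to the quotient) are small refinements of points the paper leaves implicit.
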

\begin{proof}
Notice first that $d_i^{\lor}(x_0,\dots,x_n) = d_i^{\lt}(x_0,\dots,x_n)$ if $x_i=\bot$
and $d_i^{\land}(x_0,\dots,x_n) = d_i^{\lt}(x_0,\dots,x_n)$ if $x_i=\top$.
This observation gives the~following equalities:
\begin{align*}
\partial(x_0,\top,x_2,\dots,x_n) &=
		  (a+c)\left[(\top,x_2,\dots,x_n)-(x_0,x_2,\dots,x_n)\right]\\
		&+(a+c)\sum_{i>1,x_i=\top}(-1)^i d_i^{\lt}(x_0,\top,x_2,\dots,x_n) \\
		&+(a+b)\sum_{i>1,x_i=\bot}(-1)^i d_i^{\lt}(x_0,\top,x_2,\dots,x_n),
\end{align*}
\begin{align*}
\partial(x_0,\bot,x_2,\dots,x_n) &=
		  (a+b)\left[(\bot,x_2,\dots,x_n)-(x_0,x_2,\dots,x_n)\right]\\
		&+(a+c)\sum_{i>1,x_i=\top}(-1)^i d_i^{\lt}(x_0,\bot,x_2,\dots,x_n) \\
		&+(a+b)\sum_{i>1,x_i=\bot}(-1)^i d_i^{\lt}(x_0,\bot,x_2,\dots,x_n).
\end{align*}
This shows the~first part of the~lemma.
The~second one follows from Proposition~\ref{prop:ac-ab-ann-H},
since for a~nonzero $g$ and $\partial' = \partial/g$ we have
\begin{align*}
	\partial' h^\bot + h^\bot\partial' &= \left(\frac{a+b}{g}\right)\id, \\
	\partial' h^\top + h^\bot\partial' &= \left(\frac{a+c}{g}\right)\id.
\end{align*}
But $g$ was chosen so that the~numbers on the~right-hand side are co-prime.
Hence, homology groups of $(\CF(B_1,\bot),\partial')$ are trivial.
\end{proof}

Therefore, if $(a+b)$ and $(a+c)$ are co-prime, the~third factor in~(\ref{eq:C-decomp})
adds nothing to homology. The~general case follows from Lemma~\ref{lem:diff-qdiff}.

\begin{proposition}\label{prop:F0-C-hom}
Let $(a,b,c,d)$ be any scalars. Then for $b=c=-a$
\begin{equation}\label{prop-F0-C-hom-triv}
H_n(\CF(B_1,\bot))=\Z^{2^n}
\end{equation}
and otherwise
\begin{equation}\label{prop-F0-C-hom-torsion}
H_n(\CF(B_1,\bot))=\Z_{\gcd(a+b,a+c)}^{r_n},
\end{equation}
where $r_n = \begin{cases}
	\frac{1}{3}(2^{n+1}+1),&n\textrm{ is even},\\
	\frac{1}{3}(2^{n+1}-1),&n\textrm{ is odd}.
\end{cases}$
\end{proposition}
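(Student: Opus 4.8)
The plan is to analyze the complex $(\CF(B_1,\bot),\partial)$ directly, since by the decomposition (\ref{eq:C-decomp}) and the preceding results (Proposition~\ref{prop:one-point-hom} handles $C(\{t\})$, and $\F(B_1,\bot)$ is recovered from $\CF$ and $C(B_1,\bot)$ via Proposition~\ref{prop:F0-hom}) this is the remaining piece. Write $g = \gcd(a+b,a+c)$. The first job is to compute the ranks $\rk\CF_n(B_1,\bot)$. The module $\CF_n(B_1,\bot) = C_n(B_1,\bot)/\F_n(B_1,\bot)$ is spanned by classes of tuples $(x_0,\dots,x_n)$ with $x_i\in\{\bot,\top\}$, modulo the degenerate relations from $s_0$ (the initial degeneracy) and modulo the subcomplex $C_n(\{\bot\})$. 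A short count: there are $2^{n+1}$ tuples; the image of $s_0$ together with the point subcomplex kills the tuples of the form $(x_0,x_0,\dots)$, i.e.\ $2^n$ of them, leaving $\rk\CF_n = 2^{n+1}-2^n = 2^n$. (This matches the exponent in (\ref{prop-F0-C-hom-triv}).) So $(\CF(B_1,\bot),\partial)$ is a complex of free abelian groups of ranks $2^n$.

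Next I would treat the degenerate case $b=c=-a$ first, where $\Sigma = a+b+c+d = -a+d$, but more importantly $a+b = a+c = 0$, so by Lemma~\ref{lem:C-F0-red} the differential $\partial$ vanishes identically on $\CF(B_1,\bot)$. Hence $H_n(\CF(B_1,\bot)) = \CF_n(B_1,\bot) = \Z^{2^n}$, which is (\ref{prop-F0-C-hom-triv}). For the generic case $g\neq 0$, Lemma~\ref{lem:C-F0-red} tells us $\partial = g\partial'$ where $(\CF(B_1,\bot),\partial')$ is acyclic. Now I invoke Lemma~\ref{lem:diff-qdiff} with $q = g$: since $H_n(\CF(B_1,\bot),\partial') = 0$ for all $n$ (so $k=r=0$), the formula (\ref{eq:H-gdiff}) gives $H_n(\CF(B_1,\bot),\partial) = \Z_g^{t_n}$, where $t_n$ satisfies the recursion $t_{n+1}+t_n = \rk\CF_{n+1} + \rk H_n(\CF,\partial') = 2^{n+1} + 0$, with $t_0 = \rk\CF_0 = 2^0 = 1$.

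The remaining task is to solve this recursion $t_{n+1} + t_n = 2^{n+1}$, $t_0 = 1$, and check it yields the stated $r_n$. The homogeneous solution is $c(-1)^n$ and a particular solution is $\tfrac{1}{3}2^{n+1}$ (since $\tfrac13 2^{n+2} + \tfrac13 2^{n+1} = 2^{n+1}$), so $t_n = \tfrac13 2^{n+1} + c(-1)^n$; plugging $t_0 = 1$ gives $\tfrac23 + c = 1$, i.e.\ $c = \tfrac13$, hence $t_n = \tfrac13(2^{n+1} + (-1)^n)$, which is exactly $r_n$ as written (with the two parities separated). This completes the identification. The only genuinely substantive inputs are the rank count $\rk\CF_n = 2^n$ and the acyclicity of $\partial'$, and the latter is already done in Lemma~\ref{lem:C-F0-red}; the main obstacle I anticipate is making the rank count for $\CF_n$ fully rigorous, i.e.\ verifying that the degenerate submodule $\F_n$ together with the point subcomplex $C_n(\{\bot\})$ span exactly a rank-$2^n$ direct summand of $C_n(B_1,\bot)$ — one should exhibit an explicit basis of $\CF_n$ (say, classes of tuples $(x_0,\dots,x_n)$ with $x_0 = \top$, or with $x_0\neq x_1$) rather than merely counting, so that the chain groups are honestly free of the claimed rank and Lemma~\ref{lem:diff-qdiff} applies cleanly.
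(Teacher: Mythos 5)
Your proposal is correct and follows essentially the same route as the paper: vanishing of $\partial$ on $\CF(B_1,\bot)$ when $a+b=a+c=0$ via Lemma~\ref{lem:C-F0-red}, and otherwise acyclicity of $\partial'=\partial/g$ combined with Lemma~\ref{lem:diff-qdiff} and the rank count $\rk\CF_n(B_1,\bot)=2^n$ (the paper realizes this basis as the tuples with $x_0$ determined by $x_1$, which is exactly your suggested basis of classes with $x_0\neq x_1$), leading to the same recursion $r_n+r_{n-1}=2^n$, $r_0=1$.
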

\begin{proof}
Lemma~\ref{lem:C-F0-red} implies the~differential is trivial when both
$(a+b)$ and $(a+c)$ are zero, what gives~(\ref{prop-F0-C-hom-triv}).
In the~other case, due to Lemma~\ref{lem:diff-qdiff},
$H_n$ consists only of summands $\Z_{\gcd(a+b,a+c)}$ in the~power
equal to $r_n=\rk\ker\partial_n$, where
$$
r_n + r_{n-1} = \rk\CF\!\!_n(B_1,\bot) = 2^n,
$$
since for $n>0$ the~item $x_0$ in $(x_0,\dots,x_n)$ is determined by $x_1$ and
all other items can be chosen freely and when $n=0$ the chain group is generated
by a~single element $\top$. Finally,
$$
r_n = 2^n-r_{n-1} = 2^n-2^{n-1}+\ldots+(-1)^n = \frac{1}{3}\left(2^{n+1}+(-1)^n\right),
$$
what proves the~formula for $r_n$.
\end{proof}

\begin{remark}
The~sequence $r_n$ satisfies a~Chebyshev-type equation
\begin{equation}
r_{n+2} = r_{n+1}+2r_n
\end{equation}
with initial values $r_0=r_1=1$. There is a~similar exponential growth for
homology of dihedral quandles \cite{N-P-2} which was explained by existence
of homological operations \cite{Cla,N-P-3}. This suggested there should be a~similar
story for homology of a~distributive lattice. However, our search for such
operations was not successful.
\end{remark}

Notice that if $\Sigma\neq 0$, then either homology groups $H_n(X,t)$ are finite
(if $a+b\neq 0$ or $a+c\neq 0$) or $H_n(\CF(X,t))$ are free.
Therefore we can apply Proposition~\ref{prop:F0-hom} to compute
homology groups of $\F(B_1,\bot)$. Putting everything together, we obtain
the~following result.

\begin{proposition}\label{prop:B1-hom}
Let $(a,b,c,d)$ be any integer numbers and put $g=\gcd(a+b,a+c)$ and $\Sigma=a+b+c+d$.
Then:
$$
H_n(B_1,\bot)=\begin{cases}
	\Z^{2^{n+1}-1},												& \Sigma=0,g=0,\\
	\Z_g^{2r_n-p(n+1)},											& \Sigma=0,g\neq 0,\\
	\Z^{2^n}\oplus\Z_\Sigma^{r_n-p(n+1)},					& \Sigma\neq 0,g=0,\\
	\Z_g^{r_n}\oplus\Z_{\gcd(g,\Sigma)}^{r_n-p(n+1)},	& \Sigma\neq 0,g\neq 0
\end{cases}
$$
where $p(n)$ is the~parity of $n$, and the~sequence $r_n$ is defined as in
Proposition~\ref{prop:F0-C-hom}.
\end{proposition}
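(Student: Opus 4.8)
The plan is to assemble the answer from the decomposition $C(B_1) \cong C(\{t\}) \oplus \F(B_1,\bot) \oplus \CF(B_1,\bot)$ of Corollary~\ref{cor:Ct-F0-split}, taking $t=\bot$, and then reading off $H_n(B_1,\bot) \cong H_n(\F(B_1,\bot)) \oplus H_n(\CF(B_1,\bot))$. The third summand is already known from Proposition~\ref{prop:F0-C-hom}: it is $\Z^{2^n}$ when $g=0$ (equivalently $b=c=-a$) and $\Z_g^{r_n}$ otherwise. So the whole task reduces to computing $H_n(\F(B_1,\bot))$ in each of the four cases and adding.

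First I would dispose of the two cases with $\Sigma = 0$. When $\Sigma = 0$, Corollary~\ref{cor:split-F0}\ref{cor-C-F0-Sigma-isom} (or the remark after Proposition~\ref{prop:one-point-hom}) gives $H_{n+1}(\F(B_1,\bot)) \cong H_n(B_1,\bot)$, and we also know $H_n(\CF(B_1,\bot))$ from Proposition~\ref{prop:F0-C-hom}. If moreover $g=0$, then $H_n(\CF(B_1,\bot)) = \Z^{2^n}$ and $H_n(\F)$ is determined recursively; summing the geometric-type series $2^n + 2^{n-1} + \cdots$ and using $H_0 = \Z$ yields $H_n(B_1,\bot) = \Z^{2^{n+1}-1}$ — here it is cleanest to verify directly that $\sum_{j=0}^n 2^j = 2^{n+1}-1$ matches $H_n(\F)\oplus H_n(\CF)$, which I would set up as a short induction. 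If $\Sigma=0$ but $g\neq 0$, then $H_n(\CF(B_1,\bot)) = \Z_g^{r_n}$; the recursion $H_{n+1}(\F) \cong H_n(B_1,\bot) = H_n(\F)\oplus \Z_g^{r_n}$ with $H_0(\F) = 0$ telescopes to $H_n(\F) = \Z_g^{r_{n-1}+r_{n-3}+\cdots}$, and using $r_j + r_{j-1} = 2^j$ together with the parity bookkeeping one checks $r_{n-1}+r_{n-3}+\cdots = 2 r_n - p(n+1) - r_n = r_n - p(n+1)$ — more carefully, the alternating partial sums of the $r_j$'s collapse, and I would record the identity $r_n + (r_{n-1} + r_{n-3} + \cdots) = 2r_n - p(n+1)$ so that $H_n(B_1,\bot) = \Z_g^{2r_n - p(n+1)}$.

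For the two cases with $\Sigma \neq 0$ I would invoke Proposition~\ref{prop:F0-hom}, whose hypotheses are met by the remark preceding this proposition: if $\Sigma\neq 0$ then either $g\neq 0$ and $H_n(B_1,\bot)$ is finite, or $g=0$ and $H_n(\CF(B_1,\bot))$ is free. In the case $g=0$ (so $H_n(\CF(B_1,\bot)) = \Z^{2^n}$ is free), the second formula of Proposition~\ref{prop:F0-hom} gives $H_{n+1}(\F) \cong H_{n-1}(\F) \oplus H_n(\CF)\otimes\Z_\Sigma = H_{n-1}(\F)\oplus \Z_\Sigma^{2^n}$; iterating from $H_0(\F)=0$, $H_1(\F) \cong H_0(\CF)\otimes \Z_\Sigma$, the even/odd-indexed pieces separate, and I would apply Lemma~\ref{lem:diff-qdiff} to the $\CF$-complex (whose homology is free of rank $r_n$) to count the extra $\Z_\Sigma$ factors, arriving at $H_n(\F(B_1,\bot)) = \Z_\Sigma^{r_n - p(n+1)}$ and hence $H_n(B_1,\bot) = \Z^{2^n}\oplus \Z_\Sigma^{r_n - p(n+1)}$. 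Finally, for $\Sigma\neq 0, g\neq 0$, the groups $H_n(B_1,\bot)$ are finite, so I would use the first formula of Proposition~\ref{prop:F0-hom}, $H_{n+1}(\F) \cong H_n(B_1,\bot)\otimes\Z_\Sigma$; since $H_n(B_1,\bot) = H_n(\F)\oplus \Z_g^{r_n}$, this is a recursion $H_{n+1}(\F) \cong (H_n(\F)\oplus \Z_g^{r_n})\otimes\Z_\Sigma$, and because $g$ already annihilates $H_n(\F)$ one has $H_n(\F)\otimes\Z_\Sigma \cong H_n(\F)$ (the orders divide $g \mid \Sigma$ only after intersecting, so more precisely $\Z_g\otimes\Z_\Sigma = \Z_{\gcd(g,\Sigma)}$ and $H_n(\F)\otimes\Z_\Sigma$ must be re-expanded carefully). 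Unwinding, $H_n(\F)$ acquires the summand $\Z_{\gcd(g,\Sigma)}^{r_n - p(n+1)}$ from the telescoped $r$-series exactly as in the $\Sigma=0$ subcase, giving $H_n(B_1,\bot) = \Z_g^{r_n}\oplus \Z_{\gcd(g,\Sigma)}^{r_n - p(n+1)}$.

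The main obstacle is the last case: keeping track of how the tensor product $\Z_g\otimes\Z_\Sigma = \Z_{\gcd(g,\Sigma)}$ interacts with the telescoping recursion for $H_n(\F)$, and ensuring the exponent bookkeeping with $p(n+1)$ and the identities $r_n + r_{n-1} = 2^n$, $r_n = \tfrac13(2^{n+1} + (-1)^n)$ is consistent across parities. I would treat the even and odd $n$ separately throughout and cross-check small values ($n=0,1,2$) against the stated formula to catch sign and off-by-one errors.
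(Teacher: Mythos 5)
Your proposal follows essentially the same route as the paper's proof: decompose $H_n(B_1,\bot)\cong H_n(\F(B_1,\bot))\oplus H_n(\CF(B_1,\bot))$, take the second summand from Proposition~\ref{prop:F0-C-hom}, and determine $H_n(\F(B_1,\bot))$ case by case from the recursions of Corollary~\ref{cor:split-F0} and Proposition~\ref{prop:F0-hom}. One small correction: in the case $\Sigma=0$, $g\neq 0$ the recursion $H_{n+1}(\F)\cong H_n(\F)\oplus\Z_g^{r_n}$ telescopes to the full sum $\Z_g^{\,r_0+r_1+\cdots+r_{n-1}}$, not the every-other-index sum $r_{n-1}+r_{n-3}+\cdots$ that you wrote (the step-two recursion occurs only in the $\Sigma\neq 0$ cases); the identity actually needed is $\sum_{j=0}^{n-1}r_j=r_n-p(n+1)$, which your planned cross-check at small $n$ would have caught, and with it your final answers agree with the paper in all four cases.
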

\begin{proof}
It remains to find, what $H_n(\F(B_1,\bot))$ adds to homology.
This can be derived directly from Proposition~\ref{prop:F0-hom}.
If $g=0$ and $\Sigma=0$, then we have to add another $s_n$ copies
of $\Z$, where
$$
s_n = 2^{n-1} + s_{n-1} = 2^{n-1} + 2^{n-2}+\ldots+ 1 = 2^n-1.
$$
In case $\Sigma\neq 0$ the~amount $s_n$ of additional copies of $\Z_\Sigma$ is given by
\begin{align*}
s_n
	&= 2^{n-1} + s_{n-2} = 2^{n-1} + 2^{n-3} + s_{n-4} = \\
	&= \begin{cases}
			\frac{2}{3}\left(4^{n/2}-1\right) = \frac{1}{3}\left(2^{n+1}+1\right)-1,& n\textrm{ is even},\\
			\frac{1}{3}\left(4^{(n+1)/2}-1\right) = \frac{1}{3}\left(2^{n+1}-1\right),& n\textrm{ is odd},\\
		\end{cases}
\end{align*}
so that $s_n = r_n-p(n+1)$.
Finally, when $g\neq 0$, the~amount $s_n$ of copies of $\Z_{\gcd(g,\Sigma)}$ is given by
the~other recursion formula:
\begin{align*}
3s_n
	&= 3r_{n-1}+3s_{n-1} = 2^n+(-1)^{n-1}+3s_{n-1} = \\
	&= 2^n + 2^{n-1} + \ldots + 2 + (-1)^{n-1} + \ldots + (-1)^0 =\\
	&= 2^{n+1}-2+p(n) = 2^{n+1}+(-1)^n-3p(n+1)
\end{align*}
and again $s_n = r_n-p(n+1)$.
\end{proof}

\begin{remark}\label{rmk:B1-F-direct}
Homology of $\F(B_1,\bot)$ can be computed directly
in a~similar way as we did for the~quotient $\CF(B_1,\bot)$.
Indeed, the~proof of Lemma~\ref{lem:C-F0-red} shows also that $\partial$
when restricted to $\F(B_1,\bot)$ is zero if $g=0$ or divisible by
$\gcd(g,\Sigma)$ in the~other case. Now use Proposition~\ref{prop:ac-ab-ann-H}
and the~fact that $\Sigma$ annihilates each $H_n(\F(B_1,\bot))$
to show that the~differential $\partial/\!\gcd(g,\Sigma)$ is acyclic.
\end{remark}

\subsection{Computation for any distributive lattice}
Now we are ready to compute homology groups for any distributive lattice.
We will use the~following splitting of homology groups.

\begin{lemma}\label{lem:H-split}
Let $L$ be a~distributive lattice and $t\in L$.
Pick any element $y\in L$ and assume $\gcd(a,b,c)=1$. Then
\begin{equation}
H_n(L, t) = H_n(\uparrow\!\! y, t\lor y) \oplus H_n(\downarrow\!\! y, t\land y).
\end{equation}
In particular, if $L$ is finite, then
\begin{equation}\label{eq:H-for-abc=1}
H_n(L,t) \cong H_n(B_1,\bot)^{\oplus J}.
\end{equation}
where $J$ is the~number of non-minimal irreducible elements in $L$.
\end{lemma}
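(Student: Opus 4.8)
The plan is to establish the splitting $H_n(L,t) = H_n(\uparrow\!y, t\lor y) \oplus H_n(\downarrow\!y, t\land y)$ by producing a short exact sequence of chain complexes that splits, and then iterate it down to copies of $B_1$. First I would observe that $\downarrow\!y$ is an ideal and hence a sublattice (indeed a submultispindle) of $L$, so Proposition~\ref{prop:compl-split-pairs} applied to the chain of submultispindles $\{t'\} \subset \downarrow\!y \subset L$ (for a suitable base point $t'$) yields a short exact sequence relating $C(\downarrow\!y, t')$, $C(L,t')$ and $C(L,\downarrow\!y)$. The crucial input is that $\downarrow\!y$ is a \emph{multishelf retract} of $L$: the map $r(x) := x\land y$ is a lattice homomorphism into $\downarrow\!y$ (using distributivity of $L$ — this is exactly where $\gcd(a,b,c)=1$, or rather the lattice structure, is used, since $r$ must commute with $\lor$, $\land$ and the trivial operations) and restricts to the identity on $\downarrow\!y$. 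By Proposition~\ref{prop:compl-split-pairs}, the sequence splits, giving $C(L,t') \cong C(\downarrow\!y,t') \oplus C(L,\downarrow\!y)$.

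Next I would identify $C(L,\downarrow\!y)$ with the reduced complex of $\uparrow\!y$. The idea is that the map $x \mapsto x\lor y$ sends $L$ onto $\uparrow\!y$ and collapses $\downarrow\!y$ to the single point $y$; dually to the previous paragraph, $x\mapsto x\lor y$ is a multishelf retraction of $L$ onto $\uparrow\!y$, and it descends to a chain map $C(L,\downarrow\!y) \to C(\uparrow\!y, y)$. One then checks this is an isomorphism on chain groups (the quotient $C_n(L)/C_n(\downarrow\!y)$ has a basis of tuples with at least one entry not below $y$, and applying $\lor y$ to such a tuple lands in $\uparrow\!y$ away from the collapsed point), so $C(L,\downarrow\!y) \cong C(\uparrow\!y, y)$. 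Combining with the splitting above and choosing base points compatibly (taking $t' = t$, and noting that up to the 5-lemma the reduced homology is independent of the base point as long as it is a submultispindle) gives $H_n(L,t) \cong H_n(\uparrow\!y, t\lor y)\oplus H_n(\downarrow\!y, t\land y)$.

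For the second statement, I would argue by induction on $|L|$. If $L$ has more than two elements, pick a non-minimal join-irreducible $y\in L$ (which exists by Theorem~\ref{thm:irred-chain} whenever $L$ is not a chain of length $\leq 1$, i.e. not $B_1$ or a point); then both $\uparrow\!y$ and $\downarrow\!y$ are proper sublattices, each distributive, and one counts that the number of non-minimal irreducibles satisfies $J(L) = J(\uparrow\!y) + J(\downarrow\!y)$ — irreducibles of $L$ below $y$ remain irreducible in $\downarrow\!y$ and those not below $y$ map to irreducibles of $\uparrow\!y$, with $y$ itself contributing exactly once. Applying the splitting and the inductive hypothesis to each factor, and using that $H_n(B_1,\bot)$ appears $J(\downarrow\!y) + J(\uparrow\!y) = J(L)$ times, completes the induction; the base case $L = B_1$ is trivial and $L$ a point (or a two-element chain, which is $B_1$) contributes nothing, consistent with $J = 0$ for a point.

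The main obstacle I expect is verifying cleanly that $x \mapsto x\land y$ and $x\mapsto x\lor y$ are genuine multishelf (in fact multispindle) retractions commuting with \emph{all four} operations $\{\lt,\lor,\land,\rt\}$ simultaneously — the compatibility with $\land$ and $\lor$ needs distributivity of $L$, and compatibility with the trivial operations $\lt,\rt$ is automatic but must be noted — together with the bookkeeping of base points so that Propositions~\ref{prop:compl-split} and~\ref{prop:compl-split-pairs} apply with the \emph{same} reduced complex throughout. The role of the hypothesis $\gcd(a,b,c)=1$ deserves care: it should not actually be needed for the splitting of complexes (which is purely combinatorial) but may be invoked to ensure that the $\rt$-term behaves trivially or that no spurious interaction with the scalar $d$ occurs; I would check whether the statement in fact holds for all scalars and the gcd condition is only a convenience, or whether it is genuinely used to match up the pieces with $H_n(B_1,\bot)$.
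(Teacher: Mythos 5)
Your first step is fine: $x\mapsto x\land y$ and $x\mapsto x\lor y$ are indeed multispindle retractions of $L$ onto $\downarrow\!y$ and $\uparrow\!y$ (distributivity makes them lattice homomorphisms, and compatibility with $\lt,\rt$ is automatic), so by Proposition~\ref{prop:compl-split} both $C(\downarrow\!y,t\land y)$ and $C(\uparrow\!y,t\lor y)$ are direct summands of $C(L,t)$. The gap is in your second step: the claimed chain-level isomorphism $C(L,\downarrow\!y)\cong C(\uparrow\!y,y)$ is false. The ranks already disagree: $\rk C_n(L,\downarrow\!y)=|L|^{n+1}-|\!\downarrow\!y|^{n+1}$ while $\rk C_n(\uparrow\!y,y)=|\!\uparrow\!y|^{n+1}-1$ (e.g.\ for $L=B_2$ and $y$ an atom these are $4^{n+1}-2^{n+1}$ versus $2^{n+1}-1$), and the map $x\mapsto x\lor y$ is far from injective on the asserted basis of mixed tuples. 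So after splitting off the two retracts there remains a third, ``mixed'' summand, and the whole content of the lemma is that this summand has trivial homology. That cannot be ``purely combinatorial'': your closing speculation that $\gcd(a,b,c)=1$ is only a convenience is exactly backwards. Theorem~\ref{thm:hom-all} shows that for general scalars the mixed part contributes $\Z_{g_3}$-torsion with $g_3=\gcd(a,b,c)$, so the splitting of homology genuinely fails without the gcd hypothesis.

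The paper kills the mixed part arithmetically rather than combinatorially: the homotopy $h^y(x_0,\dots,x_n)=(x_0,\dots,x_n,y)$ satisfies $\partial h^y+h^y\partial=a\,\id+b(\,\cdot\lor y)+c(\,\cdot\land y)$ on $C(L,t)$, which shows that $a\alpha$ lies in $H_n(\uparrow\!y,t\lor y)+H_n(\downarrow\!y,t\land y)$ for every class $\alpha$; combining with Proposition~\ref{prop:ac-ab-ann-H} (which gives $(a+b)\alpha=(a+c)\alpha=0$) and $\gcd(a,a+b,a+c)=\gcd(a,b,c)=1$ yields that every class lies in the sum, and the sum is direct because the two subcomplexes are disjoint direct summands. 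Your counting $J(L)=J(\uparrow\!y)+J(\downarrow\!y)$ for the iteration is essentially the paper's maximal-chain argument via Theorem~\ref{thm:irred-chain} and is fine, but the core of the lemma is missing from your argument.
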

\begin{proof}
Considering a~homotopy
$
h^y(x_0,\dots,x_n) := (x_0,\dots,x_n,y)
$
we have for any generator $\underline x := (x_0,\dots,x_n)-(t,\dots,t)\in C_n(L,t)$:
$$
\partial h^y(\underline x) + h^y(\partial\underline x) 
	= a\underline x + b(\underline x\lor y) + c(\underline x\land y).
$$
Therefore, for any class $\alpha\in H_n(L,t)$ we obtain
$$
a\alpha\in H_n(\uparrow\!\! y,t\lor y)+H_n(\downarrow\!\! y,t\land y)
$$
and since $(a+b)\alpha = (a+c)\alpha = 0$ by Proposition~\ref{prop:ac-ab-ann-H}
and $\gcd(a,b,c)=1$ it shows
$$
H_n(L,t) = H_n(\uparrow\!\! y,t\lor y)+H_n(\downarrow\!\! y,t\land y).
$$
Finally, the~above is a~direct sum, as both
subcomplexes $C(\uparrow\!\! y, t\lor y)$ and $C(\downarrow\!\! y, t\land y)$ are
direct summands of $C(L,t)$ due to Proposition~\ref{prop:F-C-CF-retract-agree}
and they are disjoint.

For the~last remark pick a~maximal chain $\mathcal{L}\subset L$ and
use the~above decomposition for every $y\in\mathcal{L}$.
The~thesis follows, because due to Theorem~\ref{thm:irred-chain}
for a~finite distributive lattice the~length of $\mathcal{L}$ equals
the~amount of non-minimal irreducible elements in $L$.
\end{proof}

Notice, that Proposition~\ref{prop:F-C-CF-retract-agree} makes
the~lemma above hold for $H_n(\F(L,t))$ and $H_n(\CF(L,t))$ as well.
Hence, we have all tools we need to compute homology for any~finite
distributive lattice.

\begin{theorem}\label{thm:hom-all}
Let $L$ be a~finite distributive lattice, $t\in L$ its element and
denote by $J$ the~amount of non-minimal irreducible elements in $L$.
Consider a~four-term chain complex for $L$ with scalars $(a,b,c,d)$
and let $\Sigma=a+b+c+d$, $g=\gcd(a+b,a+c)$, $g_3=\gcd(a,b,c)$
and $g_4=\gcd(a,b,c,d)$. Then,
$$
H_n(\CF(L,t)) = \begin{cases}
	\,\Z^{|L|^n(|L|-1)},															& g=0,a=0,\\
	\,\Z^{J2^n}     \oplus \Z_{g_3}^{(|L|-1)r_{n,|L|}-Jr_{n,2}},	& g=0,a\neq 0,\\
	\Z_g^{Jr_{n,2}}\!\oplus \Z_{g_3}^{(|L|-1)r_{n,|L|}-Jr_{n,2}}, 	& g\neq 0,
\end{cases}
$$
and
$$
H_n(\F(L,t)) = \begin{cases}
	\Z^{|L|^n-1},															& \Sigma=0,g=0,a=0,\\
	\Z_{g_4}^{r_{n,|L|}-p(n+1)},										& \Sigma\neq 0,g=0,a=0,\\
	\Z^{J(2^n-1)}\!\oplus \Z_{g_4}^{r_{n,|L|}-s_n-p(n+1)},	& \Sigma=0,g=0,a\neq 0,\\
	\Z_{\gcd(g,\Sigma)}^{s_n}\!\oplus \Z_{g_4}^{r_{n,|L|}-s_n-p(n+1)}, & \textrm{otherwise},\\
\end{cases}
$$
where $r_{n,k}=\frac{1}{1+k}\big(k^{n+1}+(-1)^n\big)$, $s_n=J(r_{n,2}-p(n+1))$
and $p(n)$ is the~parity of $n$.
\end{theorem}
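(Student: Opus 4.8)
The plan is to assemble the final formulas from the pieces already prepared in the paper, stitching the Boolean-algebra case $B_1$ to a general finite distributive lattice $L$ via Lemma~\ref{lem:H-split}. First I would dispose of the coprime case: when $g_3 = \gcd(a,b,c) = 1$, Lemma~\ref{lem:H-split} gives $H_n(L,t) \cong H_n(B_1,\bot)^{\oplus J}$, and since the splitting of Proposition~\ref{prop:F-C-CF-retract-agree} is compatible with this homotopy argument, the same holds for $H_n(\F(L,t))$ and $H_n(\CF(L,t))$ separately. In that situation the answer is just $J$ copies of Proposition~\ref{prop:B1-hom} (and its $\F$/$\CF$ refinements from Proposition~\ref{prop:F0-C-hom} and Remark~\ref{rmk:B1-F-direct}), which I would check matches the stated formula with $|L| = 2$, so that $r_{n,2} = r_n$ and $s_n = J(r_{n,2} - p(n+1))$.

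For general scalars the key observation is that rescaling reduces to the coprime case up to a uniform multiplication of the differential, so Lemma~\ref{lem:diff-qdiff} controls the change. Concretely, on $\CF(L,t)$ the differential is divisible by $g = \gcd(a+b, a+c)$ when $g \neq 0$ (this is the lattice analogue of Lemma~\ref{lem:C-F0-red}, obtained by the finite-support reduction to $B_1$-type faces exactly as in Proposition~\ref{prop:ac-ab-ann-H}), and after dividing, the complex $\CF$ is \emph{acyclic over the coprime quotient} but over $\Z$ it still carries the $\Z_{g_3}$-torsion coming from $\gcd(a,b,c)$. The ranks of the chain groups of $\CF(L,t)$ are $|L|^n(|L|-1)$ in positive degrees (the first coordinate $x_0$ is free and the others nearly free, paralleling the $2^n$ computation for $B_1$), which yields the recursion for $\rk\ker\partial_n$ whose solution is $r_{n,|L|} = \frac{1}{1+|L|}\big(|L|^{n+1} + (-1)^n\big)$. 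Feeding $\rk C_n = |L|^n(|L|-1)$ and $\rk H_n = $ (the coprime-case homology, which is $J2^n$ free plus $Jr_{n,2}$ torsion) into Lemma~\ref{lem:diff-qdiff} with $q = g_3$ produces the number of ``extra'' $\Z_{g_3}$ summands as $(|L|-1)r_{n,|L|} - Jr_{n,2}$, giving the three cases for $H_n(\CF(L,t))$ according to whether $g = 0$ (so $a = 0$ forces the differential to vanish, else purely free from $a$ alone) or $g \neq 0$.

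For $H_n(\F(L,t))$ I would run the parallel argument using the isomorphism machinery of Corollary~\ref{cor:split-F0} and Proposition~\ref{prop:F0-hom}: when $\Sigma = a+b+c+d = 0$ the maps $\sigma,\pi$ give $H_{n+1}(\F) \cong H_n(X,t)$ directly, while for $\Sigma \neq 0$ one gets the recursion $H_{n+1}(\F) \cong H_{n-1}(\F) \oplus H_n(\CF) \otimes \Z_\Sigma$ (valid since either the reduced groups are finite or $\CF$ is free). Solving this recursion against the already-computed $H_n(\CF(L,t))$ gives the $s_n = J(r_{n,2} - p(n+1))$ count of $\Z_{\gcd(g,\Sigma)}$ summands in the free-input part and an $r_{n,|L|} - s_n - p(n+1)$ count of $\Z_{g_4}$ summands where $g_4 = \gcd(a,b,c,d)$ absorbs the further torsion, matching the four displayed cases. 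The main obstacle, I expect, is \emph{bookkeeping consistency}: making sure the various subdivisions of $H_n(\CF(L,t))$ into free versus $\Z_{g_3}$ versus $\Z_g$ parts are fed into Proposition~\ref{prop:F0-hom} in the right slots, that the Tor/tensor juggling in its proof goes through with $|L|$ in place of $2$, and that the parity corrections $p(n+1)$ and the ``trivial $\Z_1 = 0$'' factors from Lemma~\ref{lem:diff-qdiff} line up so the exponents are genuinely non-negative integers in every case. Everything else is a direct computation tensoring $B_1$-formulas by $J$ and a closed-form solution of two linear recursions with the Chebyshev-type characteristic polynomial $r_{n+2} = r_{n+1} + kr_n$ generalizing the $k=2$ remark.
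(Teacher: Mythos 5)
Your proposal follows essentially the same route as the paper's proof: reduce to $J$ copies of the $B_1$ computation via Lemma~\ref{lem:H-split} in the coprime case, then rescale the differential and apply Lemma~\ref{lem:diff-qdiff} (with $q=g_3$ on $\CF(L,t)$ and $q=g_4$ on $\F(L,t)$), feeding the resulting $\CF$ groups into the recursion of Proposition~\ref{prop:F0-hom} to get the $\F$ part. One aside in your write-up is incorrect but not load-bearing: for a general lattice the differential restricted to $\CF(L,t)$ is divisible by $g_3=\gcd(a,b,c)$ (because $\partial^{\rt}$ vanishes there), \emph{not} by $g=\gcd(a+b,a+c)$ --- the $g$-divisibility in Lemma~\ref{lem:C-F0-red} depends on every coordinate of a $B_1$-chain being $\top$ or $\bot$, and the $\Z_g$ torsion enters only through the $B_1$ factors supplied by Lemma~\ref{lem:H-split}, which is what your actual computation with $q=g_3$ uses anyway.
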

\begin{proof}
If all $a,g$ and $\Sigma$ are zero, then $\partial=0$ and homology
groups are equal to the chain groups. Otherwise, notice first that $\partial^{\rt}$
vanishes on $\CF(L,t)$, so the~first formula, if $g_3=1$, follows directly from
the~formula~(\ref{eq:H-for-abc=1}) in Lemma~\ref{lem:H-split}.
For a~general case use Lemma~\ref{lem:diff-qdiff} to compute the~power
of $\Z_{g_3}$, knowing that $\rk\CF_{\!\!n} = |L|^n(|L|-1)$.

For the~second formula, assume first that $g_4=1$.
Proposition~\ref{prop:F0-hom} gives then recursive formulas for
the~size of homology, with a~solution being either $J(2^n-1)$
or $J(r_{n,2}-p(n+1))$ in this case. Then, again we use Lemma~\ref{lem:diff-qdiff}
to compute the~power of $\Z_{g_4}$ in a~general case, having
$\rk\F_{\!\!n}(L,t) = |L|^n-1$.
\end{proof}

The~theorem above answers Conjecture 29 stated in \cite{P-S}.
The~case of $(a,b,c,d)=(1,-1,0,0)$ was also conjectured by
S.~Carter. Recall that the~Boolean algebra modeled on a~set with $J$ elements
is denoted by $B_J$.

\begin{corollary}
Consider the~four-term augmented homology for $B_J$ with scalars $(a,b,c,0)$.
Then
$$
\rk H_n(B_J) = \begin{cases}
	|B_J|^{n+1} = 2^{J(n+1)},	& (a,b,c)=(0,0,0),\\
	J2^n+\delta_{0,n},			& (a,b,c)=(a,-a,-a),a\neq 0,\\
	1, 								& a+b+c=0,a\neq 0,\\
	\delta_{0,n},					& \textrm{otherwise},
\end{cases}
$$
where $\delta_{0,n}$ is $1$ if $n=0$ and $0$ otherwise.
\end{corollary}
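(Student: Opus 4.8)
The plan is to reduce everything to Theorem~\ref{thm:hom-all} by specializing to $L = B_J$ and $d = 0$, and then to read off the rank of each homology group from the direct-sum formula together with the decomposition $H_n(X) \cong H_n(t) \oplus H_n(\F(X,t)) \oplus H_n(\CF(X,t))$ of Corollary~\ref{cor:Ct-F0-split}, remembering that the augmentation only affects degree $0$ via $\tilde H_0(\{t\}) = 0$. First I would record the relevant combinatorial data for $B_J$: by Theorem~\ref{thm:irred-chain} a maximal chain in $B_J$ has length $J$, so the number of non-minimal join-irreducible elements is exactly $J$; also $|B_J| = 2^J$. With $d=0$ we have $\Sigma = a+b+c$, $g = \gcd(a+b, a+c)$, $g_3 = \gcd(a,b,c)$, and $g_4 = \gcd(a,b,c) = g_3$.

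Next I would split into the four cases of the corollary according to $(a,b,c)$. In the case $(a,b,c) = (0,0,0)$ the differential is identically zero, so $H_n = C_n = \Z B_J^{n+1}$ has rank $|B_J|^{n+1} = 2^{J(n+1)}$; this matches the first line. In the case $a+b+c = 0$ with $a \neq 0$, we have $\Sigma = 0$; here I would invoke Proposition~\ref{prop:ac-ab-ann-H}: the reduced homology $H(L,t)$ is annihilated by $\gcd(a+b,a+c)$. If $g \neq 0$ this forces the reduced homology to be pure torsion, so the only free part of $H_n(B_J)$ comes from $H_n(\{t\})$, which (since $\Sigma = 0$, Proposition~\ref{prop:one-point-hom}) is $\Z$ in every degree, giving $\rk H_n = 1$. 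If instead $g = 0$, i.e.\ $a+b = a+c = 0$, then $b = c = -a$, which is the next case, so within ``$a+b+c=0, a\neq 0$'' we may assume $g \neq 0$ and the answer is $1$ as claimed. For $(a,b,c) = (a,-a,-a)$ with $a \neq 0$: now $g = 0$ but $a \neq 0$, so from Theorem~\ref{thm:hom-all} the free part of $H_n(\CF(L,t))$ has rank $J 2^n$ and the free part of $H_n(\F(L,t))$ has rank $J(2^n-1)$ (since $\Sigma = -a \neq 0$, wait---$\Sigma = a - a - a = -a \neq 0$, so the relevant $\F$-line is $\Z_{\gcd(g,\Sigma)}^{s_n}\oplus\cdots$, which is pure torsion, contributing $0$ to the rank); combined with $\rk H_n(\{t\}) = \delta_{0,n}$ (as $\Sigma \neq 0$), the total free rank is $J2^n + \delta_{0,n}$. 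Finally, in the remaining ``otherwise'' case one has $a+b+c \neq 0$ and $(a,b,c)$ not of the special form, so $g \neq 0$ (I would check: if $g = 0$ then $b = c = -a$, forcing $a+b+c = -a \neq 0$, consistent, but then $a \neq 0$ puts us in the previous case, and $a = 0$ gives $b = c = 0$ hence the first case---so indeed in ``otherwise'' either $g \neq 0$ or $a = b = c = 0$, the latter already excluded); then all three summands $\CF$, $\F$, and (by Proposition~\ref{prop:one-point-hom}, $\Sigma \neq 0$) $H(\{t\})$ are torsion except for the degree-$0$ copy of $\Z$, giving $\rk H_n = \delta_{0,n}$.

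The only mild subtlety, and the step I would be most careful about, is the bookkeeping of which branch of Theorem~\ref{thm:hom-all} applies in each of the four corollary cases and confirming that in every case other than $(0,0,0)$ and $(a,-a,-a)$ the free parts of $H_n(\CF(L,t))$ and $H_n(\F(L,t))$ genuinely vanish---this rests on the observation that a nonzero $g$ together with Proposition~\ref{prop:ac-ab-ann-H} kills all free reduced homology, and that when $\Sigma \neq 0$ the one-point homology contributes freely only in degree $0$. Once the case analysis is pinned down, each line of the corollary is immediate, so there is no real ``hard part''; the argument is pure case-chasing on top of Theorem~\ref{thm:hom-all}. I would present it as: ``Apply Theorem~\ref{thm:hom-all} with $L = B_J$, so $J(L) = J$ and $|L| = 2^J$, and $d = 0$. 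The free rank of $H_n(B_J)$ is the sum of the free ranks of $H_n(\{t\})$, $H_n(\F(B_J,\bot))$, and $H_n(\CF(B_J,\bot))$; now run through the four cases using Propositions~\ref{prop:one-point-hom} and~\ref{prop:ac-ab-ann-H}.''
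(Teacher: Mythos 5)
Your overall strategy---specialize Theorem~\ref{thm:hom-all} to $L=B_J$ with $d=0$, use $|B_J|=2^J$ and $J(B_J)=J$, and add up the free ranks of the three summands $H_n(\{t\})$, $H_n(\F(B_J,\bot))$ and $H_n(\CF(B_J,\bot))$---is exactly the derivation the paper intends (it gives no separate proof of this corollary), and your treatment of the first three cases is correct. Note also that, as your own degree-$0$ bookkeeping shows, the displayed formulas are the ranks of the \emph{unaugmented} groups: the $\delta_{0,n}$'s are precisely the contribution of $H_0(\{t\})=\Z$, so computing with $H_n(\{t\})$ rather than $\tilde H_n(\{t\})$ is what actually reproduces the stated answer.

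The gap is in the ``otherwise'' case. You assert that ``otherwise'' forces $a+b+c\neq 0$, but it does not: triples with $a=0$ and $b=-c\neq 0$ satisfy $a+b+c=0$ yet are excluded from the third line by its hypothesis $a\neq 0$, so they land in ``otherwise''. For such scalars $\Sigma=0$, and Proposition~\ref{prop:one-point-hom} gives $H_n(\{t\})=\Z$ in \emph{every} degree, while $g=\gcd(b,c)=|b|\neq 0$ kills the free parts of $\F$ and $\CF$; hence $\rk H_n=1$ for all $n$, not $\delta_{0,n}$. Your parenthetical check only rules out $g=0$ in the ``otherwise'' case; it does not rule out $\Sigma=0$, and the appeal to ``$\Sigma\neq 0$'' in that last sentence is therefore unjustified. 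So this sub-case must be treated separately---and when one does, one finds it actually behaves like the third line of the corollary (rank $1$ in every degree), which suggests the stated condition ``$a\neq 0$'' there is too restrictive. Either way, the sentence ``in the remaining otherwise case one has $a+b+c\neq 0$'' is the step that fails and needs to be repaired before the case analysis is complete.
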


We will end this section with computing normalized homology groups.
Again, it is enough to make computation for $B_1$.
Because each $C_n^N(B_1)\subset\CF(B_1)$ has only two elements,
the~argument from Proposition~\ref{prop:F0-C-hom} implies
$
H_n^N(B_1)=\Z^2
$
if $g=0$ and otherwise
$$
H_n^N(B_1)=\begin{cases}
	\Z_g\oplus\Z, & n=0,\\
	\Z_g,         & n>0.
\end{cases}
$$
If $L$ is finite then $|C^N_n(L)|=|L|(|L|-1)^n$ and a~similar
reasoning as before gives the~following.

\begin{theorem}\label{thm:hom-norm}
Let $L$ be a~finite distributive lattice.
Denote by $J$ the~number of non-minimal irreducible
elements in $L$ and put $g=\gcd(a+b,a+c)$ and $g_3=\gcd(a,b,c)$. Then
\begin{itemize}
\item if $g=0$ and $a=0$,
$
	H_n^N(L) = \Z^{|L|(|L|-1)^n}
$
\item if $g=0$ but $a\neq 0$,
$$
H_n^N(L) = \begin{cases}
	\Z^{J+1}\oplus\Z_a^{|L|-1-J},									& n=0,\\
	\phantom{{}^1}\Z^{2J}\oplus \Z_a^{(|L|-1)^{n+1}-J},	& n>0,
\end{cases}
$$
\item if $g\neq 0$,
$$
H_n^N(L)=\begin{cases}
	\Z_g^J\oplus \Z_{g_3}^{|L|-1-J} \oplus\Z,		& n=0,\\
	\Z_g^J\oplus \Z_{g_3}^{(|L|-1)^{n+1}-J},		& n>0.
\end{cases}
$$
\end{itemize}
\end{theorem}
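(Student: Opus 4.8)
The plan is to follow the same three-step strategy that produced Theorem~\ref{thm:hom-all}: first establish the case $B_1$, then transfer to an arbitrary finite distributive lattice $L$ via the splitting of Lemma~\ref{lem:H-split}, and finally adjust for the $\gcd$ of the scalars using Lemma~\ref{lem:diff-qdiff}. For the base case, I would use the fact already recorded in the paragraph preceding the statement: each chain group $C_n^N(B_1)\subset\CF_n(B_1)$ is free of rank $2$ (it is spanned by $(\top,\bot,\dots)$-type sequences with $x_0$ determined by $x_1$ and only two choices left), and the differential restricted to $C^N(B_1)$ is the restriction of the differential on $\CF(B_1,\bot)$. Lemma~\ref{lem:C-F0-red} says this differential is zero when $g=0$ and is divisible by $g$ with acyclic quotient $\partial'=\partial/g$ otherwise. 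Running the rank recursion $r_n^N+r_{n-1}^N=2$ with $r_0^N=2$ (the degree-zero chain group $\Z\langle\top,\bot\rangle$ modulo the point $\bot$ is still rank... here one must be slightly careful about whether we reduce or not) gives $\rk\ker\partial_n=1$ for all $n$, so by Lemma~\ref{lem:diff-qdiff} the $\Z_g$-torsion appears with multiplicity $1$ in every degree, plus a single free $\Z$ in degree $0$ coming from the augmentation/point summand. This yields exactly the stated $H_n^N(B_1)$.

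Next I would globalize. By Proposition~\ref{prop:F-C-CF-retract-agree}, the decomposition $C(X)\cong C(t)\oplus\F(X,t)\oplus\CF(X,t)$ is compatible with the degenerate/normalized splitting of Theorem~\ref{thm:Norm-split}, so $C^N(L)$ decomposes accordingly and Lemma~\ref{lem:H-split} applies verbatim to the normalized complex: when $\gcd(a,b,c)=1$ one gets $H_n^N(L,t)\cong H_n^N(B_1,\bot)^{\oplus J}$, where $J=|J(L)|$ by Theorem~\ref{thm:irred-chain}. Adding back the point contribution $H_n^N(\{t\})$ (which in the normalized complex is $\Z$ in degree $0$ and $\Z_\Sigma$ in odd degrees, cut down appropriately — here one must track how the normalized point complex behaves, since $C^N(\{t\})$ is trivial in positive degrees, so actually only a $\Z$ in degree $0$ survives) produces the ``$g=0,\ a=0$'' and the free parts of the other cases. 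For the $\gcd$-dependence, I would invoke Lemma~\ref{lem:diff-qdiff} with $q=g_3=\gcd(a,b,c)$ acting on the free complex $C^N(L)$ with $g_3$ factored out, using $\rk C_n^N(L)=|L|(|L|-1)^n$; the rank recursion $t_{n+1}+t_n=|L|(|L|-1)^{n+1}+\rk H_n$ then fixes the exponents $(|L|-1)^{n+1}-J$ (and $|L|-1-J$ in degree $0$).

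The main obstacle I anticipate is bookkeeping at degree $0$ and the interaction between ``reduced'' (mod a point $t$) and ``normalized'' conventions. The statement is phrased for $H_n^N(L)$, not $H_n^N(L,t)$, so one must carefully reassemble $H_n^N(L)\cong H_n^N(\{t\})\oplus H_n^N(\F(L,t))\oplus H_n^N(\CF(L,t))$ and verify that the degenerate piece $H_n^N(\F(L,t))$ contributes nothing new — which should follow because $\F(L,t)$ is itself built from degeneracies, so $C^N$ of it could vanish, but this needs checking, perhaps via Remark~\ref{rmk:B1-F-direct} adapted to the normalized setting. Also the free $\Z$'s in degree $0$ and the $\Z^{J+1}$ versus $\Z_g^J\oplus\Z$ distinction must be matched against the base-case computation for $B_1$ and the value of $J$ for a chain (where $|L|-1=J$, collapsing the $\Z_{g_3}$ term, a useful sanity check). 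The rest is routine solving of linear recursions identical in form to those in Proposition~\ref{prop:F0-C-hom} and Proposition~\ref{prop:B1-hom}.
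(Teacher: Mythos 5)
Your proposal follows essentially the same route as the paper's own (very brief) argument: compute $H_n^N(B_1)$ from the rank-two normalized chain groups sitting inside $\CF(B_1,\bot)$ using the divisibility of $\partial$ from Lemma~\ref{lem:C-F0-red}, then globalize via the splitting of Lemma~\ref{lem:H-split} and finish with Lemma~\ref{lem:diff-qdiff} applied to $\rk C_n^N(L)=|L|(|L|-1)^n$. The one slip is your rank recursion for $B_1$, which as written ($r_n^N+r_{n-1}^N=2$ with $r_0^N=2$) would give $r_1^N=0$; the correct form from Lemma~\ref{lem:diff-qdiff} includes the term $\rk H_n(C^N,\partial/g)$, so $t_1=2+1-2=1$ and $t_n=1$ for all $n\geq 1$ --- a degree-zero subtlety you flag yourself, and your stated conclusion is the right one.
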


\section{Odds and ends}\label{chpt:ends}
\subsection{Semi-lattices and spindles}
Part of the~results of this paper can be applied to semi-lattices and even to
spindles. For example, taking $c=0$ is equivalent to forgetting the~meet operation
and considering the~lattice as a~semi-lattice.%
\footnote{
	In this way, we obtain a~chain complex and homology for any lattice,
	not only a~distributive one.
}
In particular, Proposition~\ref{prop:ac-ab-ann-H} can be restated as follows.

\begin{proposition}
Let $(C,\partial)$ be the~three-term chain complex associated to a~spindle
$(X,\star)$ for scalars $(a,b,d)$. Then
\begin{itemize}
\item if $X$ has a~right projector $p$, then $a$ annihilates $H(X,p)$,
\item if $X$ has a~right unit $u$, then $(a+b)$ annihilates $H(X,u)$.
\end{itemize}
\end{proposition}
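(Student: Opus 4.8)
The plan is to mimic the proof of Proposition~\ref{prop:ac-ab-ann-H}, but now for the three-term chain complex of a spindle $(X,\star)$ with scalars $(a,b,d)$, where the three operations are $\lt$, $\star$, and $\rt$. Recall that the differential is $\partial = a\partial^{\lt} + b\partial^{\star} + d\partial^{\rt}$. The key observation is that the homotopies $h^\bot$ and $h^\top$ used before were really homotopies of the form $h^y(x_0,\dots,x_n) = (-1)^{n+1}(x_0,\dots,x_n,y)$ for a fixed element $y$; the only properties of $y$ that were used are how the face maps $d_i^{\star_r}$ behave when the last coordinate equals $y$. In the lattice case, $\bot$ was a unit for $\lor$ and a projector (absorbing element) for $\land$; dually for $\top$. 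So in the spindle setting the two bullet points correspond to choosing $y = p$ a right projector (i.e.\ $x\star p = p$ for all $x$) or $y = u$ a right unit (i.e.\ $x\star u = x$ for all $x$).

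First I would set up the homotopy $h^p\colon C_n(X,p)\to C_{n+1}(X,p)$ by $h^p(x_0,\dots,x_n) := (-1)^{n+1}(x_0,\dots,x_n,p)$, working on the reduced complex $C(X,p)$, which by Proposition~\ref{prop:compl-split}\ref{prop-t-sub} makes sense since $\{p\}$ is a submultispindle (a right projector is in particular idempotent). Then I would compute $\partial h^p + h^p\partial$ on a generator. When we append $p$ as the last coordinate $x_{n+1}=p$: the face map $d_{n+1}^{\lt}$ simply deletes $p$ and returns the identity contribution; the face map $d_{n+1}^{\star}$ replaces each $x_i$ by $x_i\star p = p$, which collapses everything to $(p,\dots,p)$ and hence vanishes in the reduced complex $C(X,p)$; the face map $d_{n+1}^{\rt}$ replaces each $x_i$ by $x_i\rt p = p$ (wait --- actually $x_i \rt p = p$, same collapse), so that also vanishes modulo $(p,\dots,p)$. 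For $i\le n$, the faces $d_i^{\star_r}$ applied to $(x_0,\dots,x_n,p)$ with the last coordinate untouched recombine with $h^p$ applied to $d_i^{\star_r}(x_0,\dots,x_n)$ up to the sign bookkeeping, exactly as in the classical computation. The surviving term is the $a$-multiple of the identity coming from $a\,d_{n+1}^{\lt}$, giving $\partial h^p + h^p\partial = a\cdot\id$ on $C(X,p)$, so $a$ annihilates $H(X,p)$.

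For the second bullet, I would run the identical argument with $h^u(x_0,\dots,x_n) := (-1)^{n+1}(x_0,\dots,x_n,u)$ on $C(X,u)$, where $u$ is a right unit. Now appending $u$: the face $d_{n+1}^{\lt}$ contributes the identity (coefficient $a$); the face $d_{n+1}^{\star}$ replaces each $x_i$ by $x_i\star u = x_i$, so it \emph{also} contributes the identity (coefficient $b$); the face $d_{n+1}^{\rt}$ sends each $x_i$ to $u$, collapsing to $(u,\dots,u)=0$ in $C(X,u)$. Together with the recombination of the lower faces with $h^u$, this yields $\partial h^u + h^u\partial = (a+b)\cdot\id$ on $C(X,u)$, so $(a+b)$ annihilates $H(X,u)$. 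The main obstacle, such as it is, is purely the sign and index bookkeeping in verifying that $\sum_{i\le n}(-1)^i d_i^{\star_r}(x_0,\dots,x_n,y)$ together with $h^y\!\left(\sum_{i\le n}(-1)^i d_i^{\star_r}(x_0,\dots,x_n)\right)$ telescopes correctly --- but this is exactly the same computation already performed (implicitly) in Proposition~\ref{prop:ac-ab-ann-H} for the lattice, with $\bot$ or $\top$ replaced by $p$ or $u$ and one of the four operations dropped, so I would simply cite that argument. One should also remark that, as in the lattice case, the hypothesis that $X$ be "complete" (have a global $p$ or $u$) can be weakened: for a finitely supported cycle $w$ it suffices that the relevant element exists relative to the finitely many entries appearing in $w$, but for a spindle a genuine right projector or right unit is a single global element, so no such refinement is needed here.
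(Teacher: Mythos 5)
Your proof is correct and takes essentially the same approach the paper intends: the paper derives this statement directly from the argument of Proposition~\ref{prop:ac-ab-ann-H}, using the same homotopy $h^y(x_0,\dots,x_n)=(-1)^{n+1}(x_0,\dots,x_n,y)$ on the reduced complex, with $y=p$ giving $\partial h^p+h^p\partial=a\cdot\id$ (both $\star$ and $\rt$ collapse the appended face to $(p,\dots,p)=0$) and $y=u$ giving $(a+b)\cdot\id$. Your telescoping and sign bookkeeping match the computation in that proof, so there is nothing to add.
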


A~right \emph{projector} is an element $p$ such that $x\star p = p$
for any $x$. If $L$ is a~complete lattice, then a~semi-lattice $(L,\lor)$
has as a~projector the~maximal element $\top$ and as a~unit
the~minimal element $\bot$.

As a~result, if a~semi-lattice has both a~projector and a~unit,
then its~distributive homology is almost trivial. This is due to the~analogue
of Lemma~\ref{lem:H-split}.

\begin{lemma}
Let $(X,\star)$ be a~spindle with a~right unit $u$ and a~right projector $p$.
Assume $\gcd(a,b)=1$. Then
\begin{equation}
H_n(X,p) = H_n(\{p\},p) = 0.
\end{equation}
\end{lemma}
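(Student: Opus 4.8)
The~plan is to imitate the~proof of Lemma~\ref{lem:H-split}: I~would apply the~``append a~constant'' homotopy twice on $C(X,p)$, once with the~projector $p$ and once with the~unit $u$, and conclude that both $a$ and $a+b$ annihilate $H_n(X,p)$; a~B\'ezout argument then finishes the~proof.

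First I~would record that $\{p\}$ is a~one-element submultispindle of $(X,\{\lt,\star,\rt\})$, since $\star$ is idempotent and $\lt,\rt$ are trivially so; hence $C(X,p)$ is defined and, by Proposition~\ref{prop:compl-split}\ref{prop-t-sub}, may be identified with the~subcomplex of $C(X)$ spanned by the~chains $\underline x:=(x_0,\dots,x_n)-(p,\dots,p)$. For an~arbitrary $y\in X$ I~would introduce $h^y(x_0,\dots,x_n):=(-1)^{n+1}(x_0,\dots,x_n,y)$, exactly the~homotopy used in Proposition~\ref{prop:ac-ab-ann-H} and Lemma~\ref{lem:H-split}, and check that it preserves $C(X,p)$. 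A~routine sign count shows that in $\partial h^y+h^y\partial$ all faces $d_i$ with $i\le n$ cancel, leaving only the~top face; using $x\lt y=x$ and $(x_0,\dots,x_n)\rt y=(y,\dots,y)$ this yields, on $C(X,p)$,
\[
\partial h^y(\underline x)+h^y(\partial\underline x)=a\,\underline x+b(\underline x\star y),
\]
where $\underline x\star y:=(x_0\star y,\dots,x_n\star y)-(p\star y,\dots,p\star y)$; the~$\rt$-term drops out because $(x_0,\dots,x_n)\rt y$ and $(p,\dots,p)\rt y$ are both equal to $(y,\dots,y)$.

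Next I~would specialise $y$. With $y=p$ the~projector property $x\star p=p$ gives $\underline x\star p=(p,\dots,p)-(p,\dots,p)=0$, so $a\cdot\id$ is null-homotopic on $C(X,p)$ and $a$ annihilates $H_n(X,p)$ --- this is just the~spindle form of Proposition~\ref{prop:ac-ab-ann-H}. With $y=u$ the~right-unit property $x\star u=x$ gives $\underline x\star u=\underline x$, hence $\partial h^u+h^u\partial=(a+b)\id$ on $C(X,p)$ and $a+b$ also annihilates $H_n(X,p)$.

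Finally, since $\gcd(a,a+b)=\gcd(a,b)=1$, B\'ezout's identity expresses $1$ as an~integer combination of $a$ and $a+b$, so $1$ annihilates $H_n(X,p)$; therefore $H_n(X,p)=0$, and $H_n(\{p\},p)=0$ is immediate since $C(\{p\},p)$ is the~zero complex. The~only slightly delicate point will be the~sign bookkeeping that makes the~lower faces in $\partial h^y+h^y\partial$ cancel, but this is precisely the~elementary computation already carried out for Lemma~\ref{lem:H-split} and Proposition~\ref{prop:ac-ab-ann-H}, so I~would simply redo it coordinate by coordinate.
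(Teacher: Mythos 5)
Your proposal is correct and follows essentially the same route as the paper, whose proof is the single remark that one repeats the argument of Lemma~\ref{lem:H-split} with $y=p$: the homotopy $h^p$ kills the $\star$-term (since the orbit of the projector is $\{p\}$) and shows $a$ annihilates $H_n(X,p)$, the homotopy $h^u$ (equivalently, the unit case of the preceding proposition) shows $a+b$ annihilates it, and $\gcd(a,a+b)=\gcd(a,b)=1$ finishes. Your explicit verification that the $\rt$-term drops out on reduced chains and that $h^y$ preserves $C(X,p)$ is exactly the bookkeeping the paper leaves implicit.
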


\noindent The~proof follows the~one of Lemma~\ref{lem:H-split} taking $y=p$.

In a~general case, we have to add a~number of copies of $\Z_g$ and $\Z_{\gcd(g,\Sigma)}$,
where as usual $g=\gcd(a,a+b)=\gcd(a,b)$ and $\Sigma=a+b+d$. Notice, that
$\gcd(g,\Sigma)=\gcd(a,b,d)$. The~following proposition is proven in the~same
way as Theorem~\ref{thm:hom-all}.

\begin{proposition}\label{prop:semi-lattice-hom}
Let $(X,\star)$ be a~finite spindle with a~right projector $p$ and a~right unit element $u$.
Consider the~three-term chain complex $C(X)$ for scalars $(a,b,d)$ and put
$g=\gcd(a,b)$ and $\Sigma=a+b+d$. Then
$$
H_n(X,p) = \begin{cases}
	\phantom{{}^{.+s_n}}\Z^{|X|^{n+1}-1},
			& \Sigma=0,g=0,\\
	\phantom{{}^{.+s_n}}\Z^{|X|^n(|X|-1)} \oplus \Z_{\Sigma}^{r_{n,|X|}-p(n+1)},
			& \Sigma\neq 0,g=0,\\
	\Z_g^{(|X|-1)r_{n,|X|}+s_n} \oplus \Z_{\gcd(a,b,d)}^{r_{n,|X|}-p(n+1)-s_n},
			& \Sigma=0,g\neq 0,\\
	\phantom{{}^{..s_n}}\Z_g^{(|X|-1)r_{n,|X|}} \oplus \Z_{\gcd(a,b,d)}^{r_{n,|X|}-p(n+1)},
			& \Sigma\neq 0,g\neq 0,
\end{cases}
$$
where $r_{n,k}$ and $s_n$ are defined as in Theorem~\ref{thm:hom-all}
and $p(n)$ stands for the~parity of $n$.
\end{proposition}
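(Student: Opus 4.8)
The plan is to follow the blueprint established in Section~\ref{chpt:hom} for distributive lattices, now transported to the spindle setting where the meet operation and the associated scalar $c$ have been dropped. The key structural fact is that a spindle $(X,\star)$ with a right projector $p$ and a right unit $u$ behaves, for the purpose of this computation, exactly like a distributive lattice with $\bot$ and $\top$: the element $p$ plays the role of $\top$ (since $x\star p = p$) and $u$ plays the role of $\bot$ (since $x\star u = x$). First I would verify that the decomposition $C(X) \cong C(\{p\})\oplus \F(X,p)\oplus \CF(X,p)$ from Corollary~\ref{cor:Ct-F0-split} applies, taking $t=p$; here we use that $\{p\}$ is a submultispindle because $p\star p = p$. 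Homology of the point $C(\{p\})$ is given by Proposition~\ref{prop:one-point-hom}, contributing $\Z$ in degree $0$ and $\Z_\Sigma$ in odd degrees when $\Sigma\neq 0$, which is already absorbed into the reduced notation $H_n(X,p)$.

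Next I would analyze the quotient piece $\CF(X,p)$. The role of Lemma~\ref{lem:C-F0-red} is played here by the homotopies $h^u$ and $h^p$ (analogues of $h^\bot$ and $h^\top$), giving $\partial h^u + h^u\partial = a\cdot\id$ and $\partial h^p + h^p\partial = (a+b)\cdot\id$ on $C(X,p)$. Thus $g=\gcd(a,a+b)=\gcd(a,b)$ annihilates $H(X,p)$, and after dividing the differential by $g$ the complex $\CF(X,p)$ becomes acyclic by the coprimality argument. This forces $H_n(\CF(X,p))$ to be a sum of $\Z_g$'s (or free, when $g=0$), with the rank of the relevant kernel computed from $\rk\CF_{\!\!n}(X,p) = |X|^n(|X|-1)$ via the recursion in Lemma~\ref{lem:diff-qdiff}; solving this recursion yields the closed form $r_{n,|X|}=\frac{1}{1+|X|}(|X|^{n+1}+(-1)^n)$, exactly as in Proposition~\ref{prop:F0-C-hom}. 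When $g=0$ and $a=0$ the differential on $\CF$ vanishes entirely and we simply read off the chain group rank.

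Then I would handle $\F(X,p)$ using Proposition~\ref{prop:F0-hom}, which relates $H_{n+1}(\F(X,p))$ to $H_n(X,p)$ and $H_n(\CF(X,p))$ through $\Z_\Sigma$; since $\Sigma=a+b+d$ and $\gcd(g,\Sigma)=\gcd(a,b,d)$, the torsion of the initially degenerate part is controlled by $\Z_{\gcd(a,b,d)}$, with multiplicity $s_n = J(r_{n,2}-p(n+1))$ obtained from the same Chebyshev-type recursion as in Proposition~\ref{prop:B1-hom}. Here the analogue of Lemma~\ref{lem:H-split} — stated just above as the lemma with $y=p$ — is what licenses splitting the reduced homology into contributions over a maximal chain, introducing the factor $(|X|-1)$ and the count $J$; one must check that the order-ideal/filter retraction structure of Proposition~\ref{prop:F-C-CF-retract-agree} still functions for a spindle with projector and unit, which it does because the relevant maps $x\mapsto x\star y$ are spindle homomorphisms. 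Finally I would assemble the three pieces, matching cases on whether $\Sigma$ and $g$ vanish, to obtain the stated four-case formula.

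The main obstacle I anticipate is not any single computation but rather confirming that all the lattice-specific machinery of Section~\ref{chpt:hom} — in particular the splitting lemma and the retraction compatibility of Proposition~\ref{prop:F-C-CF-retract-agree} — genuinely transfers when we have only a single self-distributive idempotent operation $\star$ together with the auxiliary roles of $p$ and $u$, rather than the full lattice structure with two mutually distributive operations. The subtle point is that $\uparrow\! y$ and $\downarrow\! y$ must be replaced by appropriate $\star$-subspindles that are retracts of $X$, and one must verify that the homotopy identity $\partial h^y(\underline x) + h^y(\partial\underline x) = a\underline x + b(\underline x\star y)$ decomposes the reduced homology correctly when $\gcd(a,b)=1$; granting these, the rest is a faithful rerun of the arguments already carried out for $B_1$ and then globalized by the chain-length counting of Theorem~\ref{thm:irred-chain}.
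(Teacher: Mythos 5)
Your overall strategy is the one the paper intends --- indeed the paper's entire proof of this proposition is the one-line remark that it ``is proven in the same way as Theorem~\ref{thm:hom-all}'': decompose $C(X)\cong C(\{p\})\oplus\F(X,p)\oplus\CF(X,p)$, kill the reduced homology when $\gcd(a,b)=1$ by the two appending homotopies, recover the general $g$ from Lemma~\ref{lem:diff-qdiff} using $\rk\CF_{\!\!n}(X,p)=|X|^n(|X|-1)$, and feed the result into Proposition~\ref{prop:F0-hom} for $\F(X,p)$. One small slip: your homotopy identities are swapped. Appending the unit $u$ gives $\partial h^u+h^u\partial=(a+b)\id$ (the $\star$-face is the identity because $x\star u=x$), while appending the projector $p$ gives $a\cdot\id$ (the $\star$-face lands in $C(\{p\})$ and dies in the reduced complex); this is exactly the proposition the paper states at the start of Section~6.1. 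The conclusion $\gcd(a,a+b)=\gcd(a,b)$ is unaffected, but note it also reverses your opening analogy: for $\star=\lor$ the unit is $\bot$ and the projector is $\top$.

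The more substantive misdirection is the role you assign to the analogue of Lemma~\ref{lem:H-split}. You describe it as ``splitting the reduced homology into contributions over a maximal chain, introducing the factor $(|X|-1)$ and the count $J$,'' and you flag transferring the $\uparrow\!y$ / $\downarrow\!y$ retraction machinery and Theorem~\ref{thm:irred-chain} as the main obstacle. None of that is available or needed here: a spindle with a unit and a projector is not a poset, has no join-irreducibles, and there is no chain to induct over. The splitting lemma is applied exactly once, with $y=p$: since $X\star p=\{p\}$, it collapses $H_n(X,p)$ to $H_n(\{p\},p)=0$ in a single step when $\gcd(a,b)=1$ --- this is precisely the lemma the paper states immediately before the proposition. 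The factor $(|X|-1)$ in the answer comes solely from the rank count $\rk\CF_{\!\!n}=|X|^n(|X|-1)$ entering Lemma~\ref{lem:diff-qdiff}, not from $|X|-1$ steps of a chain. Fortunately your parallel coprimality argument (on $\CF(X,p)$ the differential is $a\partial^{\lt}+b\partial^{\star}$, since $\partial^{\rt}$ vanishes there, so divisibility by $g$ is immediate --- easier than for $B_1$ --- and the rescaled homotopy multiples $a/g$ and $(a+b)/g$ are coprime, forcing acyclicity) already does all the work, so once the maximal-chain detour is deleted the proof closes along the paper's intended lines.
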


Every finite semi-lattice has a~projector but only a~few have units.
For example, a~rooted tree has a~unit element only if it is a~chain.
The~authors computed homology groups for all semi-lattices on a~set with up to
four elements. It revealed that all of them, even those with no unit elements,
follow the~pattern from Proposition~\ref{prop:semi-lattice-hom}.

\begin{conjecture}
Let $L$ be a~semi-lattice with a projector and assume $\gcd(a,b)=1$.
Then $H_n(L,t)=0$.
\end{conjecture}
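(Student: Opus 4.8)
The plan is to mimic the proof of Lemma~\ref{lem:H-split}, but now using the projector $p$ in place of the element $y$ there. First I would choose a homotopy on the reduced complex $C(X,p)$: define
\[
h^p(x_0,\dots,x_n) := (x_0,\dots,x_n,p),
\]
and compute $\partial h^p + h^p\partial$ on a generator $\underline x = (x_0,\dots,x_n) - (p,\dots,p)$ of $C_n(X,p)$. Since $x\star p = p$ for all $x\in X$ (the projector property) and $p\star p = p$, the extra face maps $d_i^{\star}$ and $d_i^{\lt}$ land on the degenerate-looking term with a $p$ in the last slot, which should collapse nicely modulo $C(p)$. As in Lemma~\ref{lem:H-split}, the outcome is an identity of the form
\[
\partial h^p(\underline x) + h^p(\partial \underline x) = a\underline x + b(\underline x\star p),
\]
but now $\underline x\star p$ is (the reduced form of) a chain supported on $X\star p = \{p\}$, hence zero in $C(X,p)$. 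Therefore the chain map $a\cdot\id$ is null-homotopic on $C(X,p)$, so $a$ annihilates $H_n(X,p)$.

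Next I would invoke the first bullet of the preceding displayed proposition (the three-term analogue of Proposition~\ref{prop:ac-ab-ann-H}): $a$ annihilates $H(X,p)$, and also $(a+b)$ annihilates $H(X,u)$ where $u$ is the right unit. But $H(X,t)$ is independent of the choice of submultispindle $t$ used to reduce (by the 5-lemma argument quoted after Proposition~\ref{prop:compl-split}, applied in the spindle setting — both $\{p\}$ and $\{u\}$ are submultispindles since $p$ and $u$ are idempotent). Hence the same reduced homology is annihilated by both $a$ and $a+b$, thus by $\gcd(a,a+b)=\gcd(a,b)=1$, forcing $H_n(X,p)=0$.

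For the second equality $H_n(\{p\},p)=0$: the complex $C(\{p\},p) = C(\{p\})/C(\{p\})$ is the zero complex (this is the $A=B$ degenerate case of the reduced complex), so its homology vanishes trivially; I would just remark this. The main obstacle is verifying the homotopy identity carefully: one must check that inserting $p$ in the last coordinate interacts correctly with all face maps, that the terms where the inserted $p$ is itself a face-map target reduce correctly modulo the subcomplex $C(\{p\})$, and that the signs $(-1)^{n+1}$ versus $(-1)^n$ work out — essentially the same bookkeeping as in the proof of Lemma~\ref{lem:H-split}, which is why the paper says ``the proof follows the one of Lemma~\ref{lem:H-split} taking $y=p$.'' I expect no genuine difficulty beyond this routine check, so the remark-style proof given in the excerpt is appropriate.
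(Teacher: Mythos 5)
You have proved the wrong statement. The argument you give is (essentially verbatim) the paper's justification of the \emph{preceding lemma}, which assumes that the spindle has \emph{both} a right projector $p$ and a right unit $u$. The statement at hand is the conjecture that follows it, and its whole point is that the unit hypothesis is dropped: a semi-lattice is only assumed to have a projector. The paper stresses exactly this distinction (``Every finite semi-lattice has a projector but only a few have units. For example, a rooted tree has a unit element only if it is a chain.''), and it offers no proof of the conjecture --- only computer verification for semi-lattices with at most four elements.

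Concretely, the gap is in your second step. The homotopy $h^p$ does show that $a$ annihilates $H_n(L,p)$, since $x\star p=p$ pushes the $\partial^\star$- and $\partial^{\rt}$-contributions into $C(\{p\})$. But to conclude $H_n(L,t)=0$ from $\gcd(a,b)=\gcd(a,a+b)=1$ you also need $(a+b)$ to annihilate the reduced homology, and the only mechanism the paper provides for that is the homotopy $h^u$ built from a right unit $u$ (an element with $x\star u = x$ for all $x$), which uses $x\star u = x$ to produce the summand $b\cdot\id$. For a rooted tree that is not a chain there is no such $u$, so this step has no justification, and the 5-lemma independence of the basepoint cannot manufacture one. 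Unless you supply a genuinely new argument replacing the unit homotopy, the statement remains, as in the paper, a conjecture.
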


\begin{table}[tb]
\begin{tabular}{l|cccc}
\hline
\hline
spindle type     & any		& idempotent & associative & commutative \\
\hline
all spindles     & 185 (41)&  94 (14)  &  47 (13)  &  10  \\
with a~unit      &  82 \phantom{(0)} &  62 \phantom{(10)}   &  20 \phantom{(10)}   &  \phantom{1}3  \\
with a~projector & \phantom{8}78 (38)&  40 (12)  &  36 (12)  & \phantom{1}8 \\
with both        &  19 \phantom{(0)}	&  17 \phantom{(10)}   &  13 \phantom{(10)}   &  \phantom{1}3  \\
with none        &  44 (3)	&   9 (2)   &   4 (1)   &  \phantom{1}2  \\
\hline
\end{tabular}

\vskip 12pt
\caption{
	Spindles $(X,\star)$ with $3\leqslant |X| \leqslant 4$.
	Numbers in brackets, if present, indicate spindles for which
	Theorem~\ref{thm:spindle-orbit} does not hold.%
}\label{tbl:spindles-stat}
\end{table}

The~conjecture is not true for spindles in general. The~simplest example
is given by the~\projleft{} operation $x\lt y=x$. It is easy to see,
that in this case $H(\CF(X,t))$ consists of copies of $\Z_{a+b}$.
Since every element is a~right unit, each orbit is equal to $X$.
However, computation showed that homology for a~spindle $X$ usually does not
change when replacing $X$ with any of its right orbit $X\star x$.
When $X$ has up to four elements, this is true provided the~operation
is unital or commutative (see tab.~\ref{tbl:spindles-stat}).
The~first case is easy to prove.

\begin{theorem}\label{thm:spindle-orbit}
Let $(X,\star)$ be a~spindle with a~right unit
and assume $\gcd(a,b)=1$. Then for any $x\in X$ and $t\in X$
\begin{equation}
	H(X,t) = H(X\star x, t\star x).
\end{equation}
\end{theorem}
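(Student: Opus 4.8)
The plan is to show that the inner endomorphism $\star^x \colon X \to X$, $y \mapsto y \star x$, when corestricted to its image $X \star x$, is not merely a spindle homomorphism but a \emph{retraction}, and that this retraction together with its ``section-up-to-homotopy'' gives a chain homotopy equivalence between $C(X,t)$ and $C(X\star x, t\star x)$. First I would record that since $u$ is a right unit, $u \star x = x \in X\star x$, and more importantly that the element $x$ itself satisfies $x \star x = x$ (idempotency), so $\{x\}$ is a submultispindle of $X$ and $\{x \star x\} = \{x\} \subset X\star x$; this lets us work with reduced complexes based at $x$ on the right-hand side. The map $\star^x$ sends $t \mapsto t\star x$, so it induces a chain map $(\star^x)_\sharp \colon C(X,t) \to C(X\star x, t\star x)$, and on $X\star x$ it restricts to the identity (because for $y \in X\star x$, writing $y = z\star x$, idempotency-type manipulation using self-distributivity gives $y\star x = (z\star x)\star x = z\star x = y$ — this is exactly the hypothesis ensuring $X\star x$ is a genuine orbit on which $\star^x$ is the identity). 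Hence $C(X\star x, t\star x)$ is a retract of $C(X,t)$ and we already get a split injection of homology.

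For the reverse inclusion the idea is to use the homotopy $h^x(x_0,\dots,x_n) := (-1)^{n+1}(x_0,\dots,x_n,x)$ appended with the fixed element $x$, exactly in the style of the proofs of Proposition~\ref{prop:ac-ab-ann-H} and Lemma~\ref{lem:H-split}. Computing $\partial h^x + h^x \partial$ on a reduced generator $\underline{w} = (x_0,\dots,x_n) - (t,\dots,t) \in C_n(X,t)$, the face $d_{n+1}$ produces the term $a \cdot \underline{w}$ (from $d_0^{\lt}$-type contribution to the last slot) plus $b$ times the chain with every entry acted on by $\star x$, i.e. $b\cdot(\star^x)_\sharp(\underline{w})$; since $X$ is only a spindle here there is no $\land$-term, so the homotopy formula reads
\begin{equation*}
\partial h^x(\underline{w}) + h^x \partial(\underline{w}) = a\,\underline{w} + b\,(\star^x)_\sharp(\underline{w}).
\end{equation*}
Thus in $H_n(X,t)$ every class $\alpha$ satisfies $a\alpha = -b\,(\star^x)_*(\alpha)$ modulo the image of the inclusion of the smaller orbit; combined with the fact (from the right-unit analogue of Proposition~\ref{prop:ac-ab-ann-H}) that $(a+b)$ annihilates $H_n(X,t)$, we get $a\alpha \equiv -b(\star^x)_*\alpha$ and $a\alpha = -b\alpha$, so $b(\alpha - (\star^x)_*\alpha) = 0$; using $\gcd(a,b)=1$ and that $(a+b)$ also kills everything, a short Bézout argument forces $\alpha = (\star^x)_*\alpha$, i.e. $(\star^x)_*$ is the identity on $H_n(X,t)$. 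Since $(\star^x)_*$ factors through $H_n(X\star x, t\star x)$, this yields the surjection complementary to the split injection above, and the two together give the isomorphism $H(X,t) \cong H(X\star x, t\star x)$.

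I expect the main obstacle to be the bookkeeping in the homotopy computation: verifying carefully that $\partial h^x + h^x\partial$ on the \emph{reduced} complex $C(X,t)$ produces precisely $a\cdot\id + b\cdot(\star^x)_\sharp$ and no leftover terms. One has to track how $d_i$ for $i\le n$ interacts with the appended $x$ (these cancel against $h^x$ applied to the faces of $\underline{w}$), how the idempotency $x\star x = x$ is used so that $h^x$ lands correctly, and — crucially — that the subtraction of the basepoint chain $(t,\dots,t)$ is respected, which is where the right-unit hypothesis and working modulo $C(\{x\})$ on the target come in. A secondary subtlety is justifying the final number-theoretic step cleanly: from $(a+b)\alpha = 0$ and $a\alpha = -b(\star^x)_*\alpha$ together with $\gcd(a,b)=1$, one writes $1 = ma+nb$ and computes $\alpha = ma\alpha + nb\alpha = -mb(\star^x)_*\alpha - na(\star^x)_*\alpha = -(mb+na)(\star^x)_*\alpha$; one then checks $-(mb+na) \equiv 1$ acting on a group killed by $(a+b)$, i.e. $mb + na \equiv -1 \pmod{a+b}$, which follows since $ma+nb \equiv -mb-nb+nb \cdot(\cdots)$ — this requires a clean lemma that multiplication by $a$ equals multiplication by $-b$ on $H_n(X,t)$, reducing the whole argument to a computation in $\Z/(a+b)$ where $a = -b$ and $\gcd(a,b)=1$ forces $a,b$ to be units, making $(\star^x)_*$ manifestly invertible and equal to the identity.
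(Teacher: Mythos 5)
Your argument breaks at its very first step: the claim that $\star^x$ restricts to the identity on $X\star x$, i.e.\ that $(z\star x)\star x=z\star x$, does \emph{not} follow from the spindle axioms. That identity says precisely that $\star$ is an idempotent element of $Bin(X)$ (the condition $\star\star=\star$), and the paper explicitly flags that the sequence $0\to C(X\star x)\to C(X)\to C(X,X\star x)\to 0$ fails to split ``unless $\star\star=\star$''. A concrete counterexample is the dihedral quandle $R_3$ on $\Z_3$ with $z\star x=2x-z$, where $(z\star x)\star x=z$, so $\star^x$ is a nontrivial involution of the orbit. Hence $\star^x$ is not a retraction onto $X\star x$ and your ``split injection'' direction evaporates. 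What survives is only half the statement: writing $f=(\star^x)_\sharp=i\circ g$ with $g\colon C(X,t)\to C(X\star x,t\star x)$ and $i$ the inclusion, your homotopy computation $\partial h^x+h^x\partial=a\,\id+b f$ on the reduced complex, together with the $(a+b)$-annihilation from the right unit and $\gcd(a,b)=1$, correctly yields $f_*=\id$ on $H_n(X,t)$, hence $i_*g_*=\id$ and $i_*$ is \emph{surjective}. But injectivity of $i_*$ is never established: $g_*i_*$ is induced by $\star^x|_{X\star x}$, which is not the identity, and repeating the B\'ezout trick on $H_n(X\star x,t\star x)$ would require $(a+b)$ to annihilate that group, which you cannot deduce from the right unit $u$ of $X$ since $u$ need not lie in $X\star x$.

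The paper avoids the need for a splitting altogether: it runs the long exact sequence of the pair $(X,X\star x)$ and proves $H(X,X\star x)=0$. The same map $f$ induces the zero map on the quotient complex $C(X,X\star x)$, so your homotopy identity there reads $a\,\id\simeq 0$ and $a$ annihilates the relative homology; the right unit gives annihilation by $a+b$; and $\gcd(a,a+b)=\gcd(a,b)=1$ kills $H(X,X\star x)$, whence the LES gives the isomorphism in both directions at once. Your ingredients --- the homotopy $h^x$, the $(a+b)$-annihilation, and the coprimality argument --- are exactly the right ones; they just need to be aimed at the relative complex $C(X,X\star x)$ rather than at a retraction that does not exist.
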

\begin{proof}
Without loss of generality, we can assume $t=x$.
The~exact sequence (\ref{eq:seq-subcomplex}) with $A=X\star x$
does not decompose as in Proposition~\ref{prop:compl-split},
unless $\star\star=\star$. However, it induces a~long exact sequence of homologies
$$
\ldots\to H_n(X\star x, x)\to H_n(X,x)\to H_n(X,X\star x)\to H_{n-1}(X\star x, x)\to\ldots
$$
and it suffices to show that $H(X,X\star x)$ vanishes.

Define a~chain map $f\colon C(X,x)\to C(X,x)$ by
$f(y_0,\dots,y_n) = (y_0\star x,\dots,y_n\star x)$.
It takes values in $C(X\star x,x)$, so that it induces a~trivial map on $C(X,X\star x)$.
Since $X$ is a~spindle, $x=x\star x\in X\star x$ and a~homotopy $h^x$ defined
as in~the~proof of Lemma~\ref{lem:H-split} induces homotopies on both $C(X\star x, x)$
and $C(X,X\star x)$ between $a\id$ and $bf$. This shows that $a$ annihilates $H(X,X\star x)$.
Existence of a~right unit provides that $(a+b)$ annihilates
homology as well and as a~result $H(X, X\star x)=0$, since $\gcd(a,b)=1$.
\end{proof}

The~theorem says nothing about homology of quandles, however.
This is because quandles have no proper right orbits,
what means that they are the~basic pieces for spindles from the~point of
view of distributive homology. On the~other hand, if we restrict
to idempotent spindles (i.e. those with $\star\star=\star$),
then the~only operation with no proper right orbits is $\lt$,
whose homology is easy to compute: it is acyclic if $a+b=1$ and otherwise
it follows from Lemma~\ref{lem:diff-qdiff}.
Therefore, Theorem~\ref{thm:spindle-orbit} gives a~full answer
for those spindles.

We conjecture the~theorem holds also for all commutative spindles.
These are more general than semi-lattices, because commutativity does
not imply associativity%
\footnote{
	However, $X$ is associative if $\star$ is also an~idempotent in $Bin(X)$,
	i.e. $\star\star = \star$.
}
and the~smallest non-associative example is provided by the~dihedral
quandle $R_3$ with three elements, where $X=\Z_3$ and $x\star y = 2y-x$.

\begin{conjecture}\label{conj:3-monoids}
If $(X,\star)$ is a~commutative spindle, $t,x\in X$ and $\gcd(a,b)=1$,
then $H_n(X,t) = H_n(X\star x,t\star x)$.
\end{conjecture}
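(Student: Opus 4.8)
The plan is to run the argument of Theorem~\ref{thm:spindle-orbit} as far as it goes and then try to replace the one missing ingredient --- a right unit --- by commutativity. Since reduced homology is base-point independent, we may assume $t=x$, so the claim becomes $H_n(X,X\star x)=0$ for all $n$; by the long exact sequence of the pair $(X,X\star x)$ (with base point $x=x\star x\in X\star x$) this is equivalent to showing that the inclusion $\iota\colon X\star x\hookrightarrow X$ induces isomorphisms $\iota_*\colon H_n(X\star x,x)\to H_n(X,x)$.

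The inner endomorphism $g\colon X\to X$, $g(y)=y\star x$, is a spindle homomorphism by right self-distributivity, equals $y\mapsto x\star y$ by commutativity, has image the subspindle $X\star x$, and fixes $x$. Hence $g_\sharp$ is a chain endomorphism of $C(X,x)$ whose image lies in $C(X\star x,x)$, so it is zero on the quotient $C(X,X\star x)$. The homotopy $h^x$ inserting $x$ in the last slot (well defined on $C(X,x)$ because it fixes $C(x)$, cf.~Lemma~\ref{lem:H-split}) satisfies $\partial h^x+h^x\partial=a\cdot\id+b\cdot g_\sharp$; passing to $C(X,X\star x)$ yields $a\cdot\id\simeq 0$, so $a$ annihilates $H_n(X,X\star x)$, exactly as before. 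Iterating, $b^N g_\sharp^{\,N}$ is chain homotopic to $\pm a^N\id$ on $C(X,x)$, and since $X$ is finite the descending chain $X\supseteq X\star x\supseteq X\star x\star x\supseteq\cdots$ stabilizes at a subspindle $X_\infty$ on which $g$ restricts to a bijection; as $g^N_\sharp$ factors through $C(X_\infty,x)$, the same computation shows that $a^N$ annihilates $H_n(X,X_\infty)$, while on $C(X_\infty,x)$ the relation $a\cdot\id\simeq-b\cdot g_\sharp$ rewrites, $g_\sharp$ being invertible there, as $b\cdot\id\simeq-a\cdot g_\sharp^{-1}$.

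The main obstacle is precisely the step that was free in the unital case: producing a relation coprime to $a$ on the \emph{relative} groups. A right unit $u$ furnished the homotopy $h^u$ with $g_u=\id$ and hence $(a+b)\cdot\id\simeq 0$; with no right unit one must extract such a relation from commutativity together with the bijectivity of $g$ on $X_\infty$. The natural target is to show that $g_*$ is an isomorphism on every $H_n(X,x)$: then $\iota_*g_*$ and $g_*\iota_*$, being multiplication by $x$ on $X$ and on $X\star x$ respectively, would both be isomorphisms, forcing $\iota_*$ to be one as well. Unfortunately $h^x$ yields only the single congruence $b\,g_*=-a\cdot\id$ on $H_n(X,x)$, which does not make $g_*$ invertible --- it fails, for instance, on any nonzero torsion summand annihilated by $a$, where $g_*$ becomes the zero map --- so a genuinely new input is required. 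I would look for it either in a more symmetric homotopy that uses commutativity to insert $x$ on \emph{both} sides of a tuple, producing a second independent congruence, or in a structural d\'evissage of finite commutative spindles that reduces the claim to commutative quandles (where it is vacuous), parallel to the reduction to $B_1$ in Theorem~\ref{thm:hom-all}. Closing this gap is the part I expect to be genuinely hard, which is presumably why the statement is left as a conjecture.
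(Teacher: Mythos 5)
The statement you were asked to prove is Conjecture~\ref{conj:3-monoids}: the paper does \emph{not} prove it, but only reports that it has been verified by computer for commutative spindles on small underlying sets (the smallest non-associative test case being the dihedral quandle $R_3$). So there is no proof in the paper to compare yours against, and you have read the situation correctly: you do not claim a complete proof, and you do not have one. The part you do establish is sound and coincides exactly with the first half of the paper's proof of Theorem~\ref{thm:spindle-orbit}: reducing to $t=x$, passing to the long exact sequence of the pair $(X,X\star x)$, and using the homotopy $h^x$ to get $\partial h^x+h^x\partial=a\cdot\id+b\cdot g_\sharp$ on $C(X,x)$, hence $a\cdot\id\simeq 0$ on $C(X,X\star x)$. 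Your diagnosis of the obstruction is also the correct one: in the unital case the second homotopy $h^u$ supplies $(a+b)\cdot\id\simeq 0$, and $\gcd(a,a+b)=\gcd(a,b)=1$ finishes the argument, whereas commutativity alone yields only the single relation $b\,g_*=-a\cdot\id$, which does not make $g_*$ (equivalently $\iota_*$) invertible.

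Two caveats on the exploratory second half. First, the conjecture as stated does not assume $X$ finite, so the stabilization of the chain $X\supseteq X\star x\supseteq X\star x\star x\supseteq\cdots$ at a subspindle $X_\infty$ on which $g$ is bijective requires finiteness or some chain condition you would have to impose. Second, your closing strategy (``show $g_*$ is an isomorphism on $H_n(X,x)$, then deduce $\iota_*$ is one'') is circular in the sense that $g_*$ being an isomorphism is essentially equivalent to the statement to be proved; as you yourself observe, the only relation available, $b\,g_*=-a\cdot\id$, is compatible with $g_*$ vanishing on $a$-torsion. In short: your partial argument reproduces everything the paper can actually prove, and the statement remains open in the paper as well, so there is no gap to fault you for beyond the one the authors themselves could not close.
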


\subsection{Skew lattices}
Methods of this paper extend nicely over \emph{skew lattices}, which are
noncommutative variants of lattices. Here we state basic definitions and facts.
For more details and proofs the~reader is referred to \cite{LeechRec,Leech}.

A~skew lattice is an~algebraic system $(L,\land,\lor)$ with both operations being
associative and idempotent, such that all variants of absorption law hold:
\begin{align}
x\land (x\lor y) =&\ x = (y\lor x)\land x, \\
x\lor (x\land y) =&\ x = (y\land x)\lor x.
\end{align}
\noindent There is a~natural partial order on $L$
\begin{equation}
	x\leq y \quad\equiv\quad x\land y = x = y\land x \quad\equiv\quad x\lor y = y = y\lor x
\end{equation}
and a~natural pre-order%
\footnote{
	A~pre-order is transitive and reflexive, but might not be antisymmetric.
}
\begin{equation}
	x\preccurlyeq y \quad\equiv\quad x\land y\land x = x \quad\equiv\quad x\lor y\lor x = y.
\end{equation}
The latter induces an~equivalence relation
\begin{equation}
	x\sim y\quad\equiv\quad x\preccurlyeq y \textrm{ and } y\preccurlyeq x
\end{equation}
and the~equivalence classes are called $\mathcal{D}$-classes.
As in case of lattices, we can create Hasse diagrams for skew lattices (fig.~\ref{fig:skew}).
However, contrary to lattices, they do not include enough information to read
the~full structure of a~skew lattice.

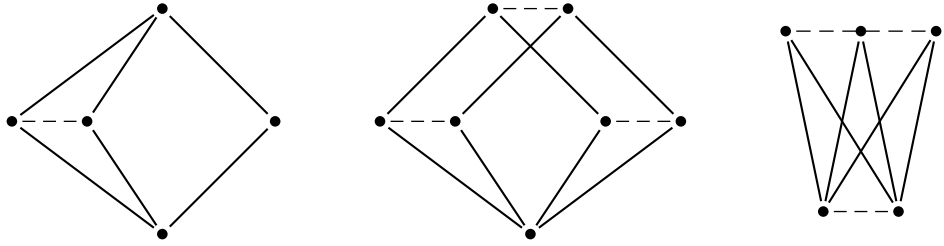
\begin{figure}[t]
	\begin{center}
		\hfill\hfill
\begin{pspicture}(3.5,3)
\cnode*(2,0){2pt}{b}
\cnode*(0,1.5){2pt}{l1}
\cnode*(1,1.5){2pt}{l2}
\cnode*(3.5,1.5){2pt}{r}
\cnode*(2,3){2pt}{t}
\ncline[nodesep=2pt,linestyle=dashed,linewidth=0.5pt]{l1}{l2}
\ncline[nodesep=2pt]{b}{l1}
\ncline[nodesep=2pt]{b}{l2}
\ncline[nodesep=2pt]{b}{r}
\ncline[nodesep=2pt]{l1}{t}
\ncline[nodesep=2pt]{l2}{t}
\ncline[nodesep=2pt]{r}{t}
\end{pspicture}
\hfill
\begin{pspicture}(4,3)
\cnode*(2,0){2pt}{b}
\cnode*(0,1.5){2pt}{l1}
\cnode*(1,1.5){2pt}{l2}
\cnode*(3,1.5){2pt}{r1}
\cnode*(4,1.5){2pt}{r2}
\cnode*(1.5,3){2pt}{t1}
\cnode*(2.5,3){2pt}{t2}
\ncline[nodesep=2pt,linestyle=dashed,linewidth=0.5pt]{l1}{l2}
\ncline[nodesep=2pt,linestyle=dashed,linewidth=0.5pt]{r1}{r2}
\ncline[nodesep=2pt,linestyle=dashed,linewidth=0.5pt]{t1}{t2}
\ncline[nodesep=2pt]{b}{l1}
\ncline[nodesep=2pt]{b}{l2}
\ncline[nodesep=2pt]{b}{r1}
\ncline[nodesep=2pt]{b}{r2}
\ncline[nodesep=2pt]{l1}{t1}
\ncline[nodesep=2pt]{l2}{t2}
\ncline[nodesep=2pt]{r1}{t1}
\ncline[nodesep=2pt]{r2}{t2}
\end{pspicture}
\hfill
\begin{pspicture}(2,2)
\cnode*(0.5,0.3){2pt}{b1}
\cnode*(1.5,0.3){2pt}{b2}
\cnode*(0,2.7){2pt}{t1}
\cnode*(1,2.7){2pt}{t2}
\cnode*(2,2.7){2pt}{t3}
\ncline[nodesep=2pt,linestyle=dashed,linewidth=0.5pt]{b1}{b2}
\ncline[nodesep=2pt,linestyle=dashed,linewidth=0.5pt]{t1}{t3}
\ncline[nodesep=2pt]{b1}{t1}
\ncline[nodesep=2pt]{b1}{t2}
\ncline[nodesep=2pt]{b1}{t3}
\ncline[nodesep=2pt]{b2}{t1}
\ncline[nodesep=2pt]{b2}{t2}
\ncline[nodesep=2pt]{b2}{t3}
\end{pspicture}
\hfill\hfill\ 
	\end{center}
	\caption{Hasse diagrams of skew lattices. Solid lines represent the~natural partial
				order, while dashed lines group elements into $\mathcal{D}$-classes.}
	\label{fig:skew}
\end{figure}

Notice, that for any skew lattice $L$, the~quotient $L/\!\!\sim$ is a~lattice.
The~Clifford-McLean Theorem (see \cite{LeechRec}) says this is the~largest lattice
among quotients of $L$. The~quotient map is not always a~retraction, but this is
the~case of $L$ is \emph{symmetric}, that is $x\land y = y\land x$ if and only if
$x\lor y = y\lor x$.

Naive distributivity conditions are too strong, because they imply a~skew lattice
is a~lattice. Therefore, a~weaker condition is imposed. Say that a~skew lattice
$(L,\land,\lor)$ is \emph{distributive}, if the~operations are distributive
from both sides at the same time:
\begin{align}
	x\land(y\lor z)\land x &= (x\land y\land x)\lor (x\land z\land x), \\
	x\lor(y\land z)\lor x &= (x\lor y\lor x)\land (x\lor z\lor x).
\end{align}
This does not imply distributivity in our sense.
However, we can define ``conjugated'' operations $\triangledown$ and $\triangleup$
as follows:
$$
	x\triangledown y := y\lor x\lor y \qquad\qquad
	x\triangleup y := y\land x\land x.
$$
It is a~straightforward calculation to check that these operations are idempotent,
distributive%
\footnote{
	Self-distributivity is a~bit tricky. It requires \emph{regularity},
	i.e. $x\star y\star x\star z\star x = x\star y\star z\star x$, and is proven
	in \cite{LeechRec}.
}
with respect to each other (if $L$ is distributive) and they satisfy
the~absorption law. Moreover, they are idempotent elements in $Bin(L)$, i.e.
$(x\triangleup y)\triangleup y = x\triangleup y$ and similarly for $\triangledown$.
If $L$ is a~lattice then the~conjugated operations are equal to the~original one.
Notice, that $\triangledown$ and $\triangleup$ are still associative.

Most of the~results from this paper extend to skew lattices.
The~first observation is that the~quotient map $L\to L/\!\!\sim$
induces a~map on homology groups. If $L$ is~symmetric, we have more.

\begin{corollary}
If $L$ is a~symmetric skew lattice, then $H_n(L/\!\!\sim)$ is a~direct summand of $H_n(L)$.
\end{corollary}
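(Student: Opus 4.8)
The plan is to exhibit the quotient map $q\colon L\to L/\!\!\sim$ as a split epimorphism of skew lattices, so that the induced map $q_*\colon H_n(L)\to H_n(L/\!\!\sim)$ is split surjective and hence $H_n(L/\!\!\sim)$ is a direct summand of $H_n(L)$. For this we need a section $j\colon L/\!\!\sim\;\to L$ that is a homomorphism for the conjugated operations $\triangledown,\triangleup$ (these are the ones making $L$ a multispindle in our sense, by the discussion preceding the corollary), together with $q\circ j = \id$. Then functoriality of the multi-term chain complex --- each multispindle homomorphism induces a chain map, as recalled in section~\ref{chpt:complex} --- gives chain maps $j_\sharp$ and $q_\sharp$ with $q_\sharp j_\sharp = \id$, whence $q_* j_* = \id$ on homology and $j_*$ identifies $H_n(L/\!\!\sim)$ with a direct summand of $H_n(L)$.

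The construction of the section is where symmetry enters. The Clifford--McLean structure theory tells us that a symmetric skew lattice $L$ admits a lattice section: one chooses, in a compatible way across all $\mathcal D$-classes, a transversal that is closed under $\land$ and $\lor$ and on which these operations agree with the lattice operations of $L/\!\!\sim$. First I would invoke this structure result from \cite{LeechRec,Leech} to obtain a sublattice $L_0\subset L$ mapped isomorphically onto $L/\!\!\sim$ by $q$, and let $j$ be the composite $L/\!\!\sim\;\xrightarrow{\ \cong\ } L_0\hookrightarrow L$. The point to verify is that $j$ is not merely a lattice homomorphism but a homomorphism for $\triangledown$ and $\triangleup$; this is automatic, because on a sublattice (a subset on which $\land,\lor$ restrict to a genuine lattice) the conjugated operations reduce to the original ones --- exactly the remark made in the text that ``if $L$ is a~lattice then the~conjugated operations are equal to the~original one'' --- and $q$ intertwines the conjugated operations on $L$ with the (ordinary $=$ conjugated) operations on $L/\!\!\sim$.

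The main obstacle is precisely the existence and compatibility of this section: for a general skew lattice the quotient map $L\to L/\!\!\sim$ need not be a retraction (as the text explicitly warns), so the argument genuinely uses symmetry, and I would want to be careful that the chosen transversal is simultaneously closed under both $\land$ and $\lor$ --- not just a set-theoretic splitting --- since only then does $j$ preserve the multispindle structure. Once $j$ is in hand the rest is formal: apply Proposition~\ref{prop:multi-term-is-weak-simpl} to see $C(L)$ and $C(L/\!\!\sim)$ are the multi-term complexes of multispindles, pass to the induced chain maps, use $q_\sharp j_\sharp=\id$ to split $C(L)\cong \ker q_\sharp \oplus \im j_\sharp$ as graded groups with $\im j_\sharp\cong C(L/\!\!\sim)$ a subcomplex, and read off $H_n(L)\cong H_n(L/\!\!\sim)\oplus H_n(\ker q_\sharp)$, which is the claim.
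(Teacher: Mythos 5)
Your proposal is correct and follows essentially the same route the paper intends: symmetry guarantees (via Leech's structure theory) that $q\colon L\to L/\!\!\sim$ admits a lattice section, i.e.\ that $L/\!\!\sim$ sits inside $L$ as a multispindle retract, after which Proposition~\ref{prop:compl-split}\ref{prop-retract} (equivalently, your $q_\sharp j_\sharp=\id$ argument) splits the chain complex and hence the homology. Your explicit check that the section preserves the conjugated operations $\triangledown,\triangleup$ --- because on a commutative sublattice they coincide with $\lor,\land$ --- is a detail the paper leaves implicit, and it is exactly the right point to verify.
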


\noindent The~simplest example with $H(L)$ being strictly larger is provided
by a~\emph{rectangular} skew lattice, that is a~skew lattice consisting
of a~unique $\mathcal{D}$-class. In this case, $\triangledown$ coincides
with $\lt$ and $\triangleup$ with $\rt$, and homology groups contain
$\Z_{a+b}$ as summands.

If a~skew lattice $L$ has a~minimum and a~maximum, then all proofs
can be repeated. In particular, we can show the~following.

\begin{proposition}
Let $L$ be a~finite skew lattice with a~unique maximum and a~unique minimum.
Then $H_n(L) \cong H_n(L/\!\!\sim)\oplus\Z_{\gcd(a,b,c)}^p\oplus\Z_{\gcd(a,b,c,d)}^q$
for some $p$ and $q$.
\end{proposition}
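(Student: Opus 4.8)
The plan is to mimic, as closely as possible, the proof strategy already used for distributive lattices in Theorem~\ref{thm:hom-all}, replacing the commutative operations $\lor,\land$ by the conjugated operations $\triangledown$ and $\triangleup$ introduced above. First I would observe that, since $L$ is distributive as a skew lattice, the pair $\{\triangledown,\triangleup\}$ is a distributive set of idempotent operations in $Bin(L)$ satisfying the absorption law, so that $\{\lt,\triangledown,\triangleup,\rt\}$ is a four-operation multispindle structure on $L$, and the corresponding four-term chain complex with scalars $(a,b,c,d)$ is exactly the one computing $H_n(L)$. The key point is that the quotient map $q\colon L\to L/\!\!\sim$ is a homomorphism of this multispindle structure (the conjugated operations descend to $\lor,\land$ on the lattice $L/\!\!\sim$), so $q$ induces a chain map on four-term complexes.

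Next, I would use the existence of a unique minimum $\bot$ and a unique maximum $\top$ to run the Mayer--Vietoris machinery. Because $\{\bot\}$ and $\{\top\}$ are submultispindles (both operations are idempotent), the homotopies $h^\bot$ and $h^\top$ from Proposition~\ref{prop:ac-ab-ann-H} are available verbatim: they show that $(a+b)$ and $(a+c)$ annihilate the reduced homology $H_n(L,t)$ of the skew lattice, hence so does $g=\gcd(a+b,a+c)$. Then the analogue of Lemma~\ref{lem:H-split}, applied with $y$ ranging over a maximal chain of the lattice $L/\!\!\sim$ pulled back to $L$, splits $H_n(L,t)$ provided $\gcd(a,b,c)=1$; the retractions $x\mapsto x\triangledown y$ and $x\mapsto x\triangleup y$ exist precisely because $\triangledown,\triangleup$ are idempotent in $Bin(L)$ and satisfy the absorption law, which is what Proposition~\ref{prop:F-C-CF-retract-agree} needs. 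For general $(a,b,c,d)$ one divides out $g_3=\gcd(a,b,c)$ and $g_4=\gcd(a,b,c,d)$ and applies Lemma~\ref{lem:diff-qdiff}, exactly as in the proof of Theorem~\ref{thm:hom-all}, to pick up the extra $\Z_{g_3}$ and $\Z_{g_4}$ summands; these account for the $\Z_{\gcd(a,b,c)}^p$ and $\Z_{\gcd(a,b,c,d)}^q$ terms, while the $H_n(L/\!\!\sim)$ summand is the image of $q_*$, which is a direct summand because $q$ admits a section on the relevant subquotients (Clifford--McLean plus the symmetry-free part of the argument, or simply because all chain groups are free and the ranks match).

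The main obstacle I anticipate is not the homological algebra but verifying that the conjugated operations genuinely give a multispindle and that $q$ respects them: one must check $x\triangledown y := y\lor x\lor y$ and $x\triangleup y := y\land x\land y$ are mutually distributive (this uses regularity, as the paper already flags in a footnote and attributes to \cite{LeechRec}) and that $q(x\triangledown y)=q(x)\lor q(y)$, $q(x\triangleup y)=q(x)\land q(y)$, which follows because on each $\mathcal{D}$-class the order collapses. Once that bookkeeping is done, the identification of $p$ and $q$ with the explicit exponents appearing in Theorem~\ref{thm:hom-all} (now with $|L/\!\!\sim|$ and its number $J$ of non-minimal irreducibles in place of $|L|$) is mechanical, and since the statement only asserts existence of \emph{some} $p$ and $q$, I would not push the computation that far; it suffices to note that after splitting off $H_n(L/\!\!\sim)$ the remaining homology is killed by $g_3$ and, one degree of divisibility deeper, by $g_4$, which forces it into the stated form.
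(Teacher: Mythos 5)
Your proposal is correct and follows essentially the same route as the paper, which only sketches the argument: repeat the distributive-lattice machinery (the homotopies $h^\bot,h^\top$, the orbit splitting, and Lemma~\ref{lem:diff-qdiff}) for the conjugated operations $\triangledown,\triangleup$, and observe that a maximal chain in $L$ induces one of the same length in $L/\!\!\sim$, so the chain-length-indexed torsion reproduces $H_n(L/\!\!\sim)$ while the leftover $\Z_{\gcd(a,b,c)}$ and $\Z_{\gcd(a,b,c,d)}$ powers merely record the difference in chain-group sizes. One caution: the identification of the $H_n(L/\!\!\sim)$ summand should rest on this explicit computation rather than on a section of $q_*$, since the paper's preceding corollary only provides such a splitting for \emph{symmetric} skew lattices; your fallback justification (compare the two explicitly computed answers) is the right one.
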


\noindent This is because any chain in $L$ induces a~chain of the~same length in $L/\!\!\sim$.
The~numbers $p$ and $q$ reflect the~difference in size of complexes and are easy to compute.
More mysterious are computation that shows the~above proposition holds
if a~skew lattice has only maximum or only minimum.%
\footnote{
	This can happen if the~$\mathcal{D}$-class of $\top$ or $\bot$ has more than one element.
}
The~following conjecture has been checked for all skew-lattices with at most four elements.

\begin{conjecture}
If $L$ is a~finite skew lattice with either a~unique minimum or a~unique maximum,
$H_n(L) \cong H_n(L/\!\!\sim)\oplus\Z_{\gcd(a,b,c)}^p\!\oplus\Z_{\gcd(a,b,c,d)}^q$
for some $p$ and $q$.
\end{conjecture}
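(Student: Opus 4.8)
The plan is to reduce the conjecture to the already-proven case of a skew lattice with both a unique maximum and a unique minimum (the proposition just stated), by a careful analysis of the $\mathcal{D}$-class structure near the extremal element. Suppose $L$ has a unique minimum $\bot$ but its top $\mathcal{D}$-class $D_\top$ has several elements; the case of a unique maximum is dual. The first step is to pick a chain $t\in L$ and, following the pattern of Lemma~\ref{lem:H-split} and its analogue used in Theorem~\ref{thm:hom-all}, to use the homotopies $h^\bot$ and $h^\top$ built from the conjugated operations $\triangledown,\triangleup$. Since $\bot$ is a genuine two-sided minimum, $h^\bot$ works exactly as in Proposition~\ref{prop:ac-ab-ann-H}, giving that $(a+b)$ annihilates the reduced homology. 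For the top end one replaces $\top$ by an element of $D_\top$; here regularity of the conjugated operations ensures that $x\triangledown e\triangledown x$ and $x\triangleup e\triangleup x$ behave like meet/join with a maximal element up to the $\mathcal{D}$-equivalence, so that $(a+c)$ still annihilates reduced homology after passing to the associated lattice. Combined with $\gcd(a,b,c)=g_3$ and $\gcd(a,b,c,d)=g_4$, this forces the reduced homology to be $g_3$- or $g_4$-torsion apart from what comes from $L/\!\!\sim$.

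The second step is to make the comparison with $H_n(L/\!\!\sim)$ precise. The quotient map $q\colon L\to L/\!\!\sim$ induces a chain map $q_\sharp$, and because each chain in $L/\!\!\sim$ lifts to a chain of the same length in $L$ (choosing any representatives), $q_\sharp$ is split surjective on chain groups after restricting to the relevant initially-normalized subcomplexes, exactly as in the symmetric case giving the stated corollary. The cokernel complex $C(L)/q_\sharp^{-1}(\text{im})$ — more precisely the relative complex $C(L, q)$ measuring the difference — has chain groups whose ranks are polynomial in $|L|$ and $|L/\!\!\sim|$, governed by the sizes of the $\mathcal{D}$-classes. The key point is that on this relative complex the differential is, after dividing by $g_3$ (respectively $g_4$ on the initially degenerate part), acyclic: this is where one invokes the argument of Lemma~\ref{lem:C-F0-red} together with Lemma~\ref{lem:diff-qdiff}, since $g$ was defined so that $(a+b)/g$ and $(a+c)/g$ are coprime. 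Thus the extra homology consists only of copies of $\Z_{g_3}$ and $\Z_{g_4}$, and the exponents $p,q$ are read off from the recursion~(\ref{eq:H-qdiff-1's}) applied to the relative chain groups.

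The main obstacle, and the reason this is only a conjecture, is the last claim of acyclicity of the relative complex when there is no genuine top (or bottom) element: the homotopy $h^\top$ is only defined up to $\mathcal{D}$-equivalence, and the terms that spoil it — the ``error'' chains supported in non-singleton $\mathcal{D}$-classes above the chosen representative — do not obviously cancel unless $L$ is symmetric. In the symmetric case the Clifford--McLean retraction handles this; without symmetry one would need a substitute for the retraction, perhaps using the natural pre-order $\preccurlyeq$ to build a partial homotopy class by class. I would try to prove that the filtration of $C(L)$ by $\mathcal{D}$-class depth gives a spectral sequence collapsing to $H(L/\!\!\sim)$ plus torsion, with the torsion controlled by the rectangular sub-skew-lattices (each $\mathcal{D}$-class), for which the homology is known to contribute only $\Z_{a+b}$-summands. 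The honest statement is that I expect to be able to prove it under the extra hypothesis of symmetry, and that the general case — exactly as the authors report — is supported only by the computer check on skew lattices with at most four elements.
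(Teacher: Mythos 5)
The statement you were asked to prove is presented in the paper only as a conjecture: the authors give no proof, report that the phenomenon is ``mysterious,'' and support it solely by computer verification for all skew lattices with at most four elements. So there is no paper proof to compare against, and your proposal has to stand or fall on its own.

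As a proof it does not stand, and to your credit you say so explicitly. The first half is fine: with a genuine two-sided minimum $\bot$ the homotopy $h^\bot$ works verbatim as in Proposition~\ref{prop:ac-ab-ann-H} and shows $(a+b)$ annihilates reduced homology. The gap is exactly where you locate it, but it is worth being precise about why it is fatal as written. If the top $\mathcal{D}$-class $D_\top$ is not a singleton and you choose a representative $e\in D_\top$, the identity $\partial h^e + h^e\partial = (a+c)\id$ fails: the $b$-term of the last face map sends $(x_0,\dots,x_n)$ to $(x_0\triangledown e,\dots,x_n\triangledown e)$, which is supported in $D_\top$ but is \emph{not} constant (so it does not cancel against the corresponding term for $(t,\dots,t)$), and the $c$-term gives $(x_0\triangleup e,\dots,x_n\triangleup e)$ with $x\triangleup e$ only $\mathcal{D}$-below $x$, not equal to it. Your phrase ``behave like meet/join with a maximal element up to $\mathcal{D}$-equivalence'' papers over precisely these error chains; nothing in the proposal cancels them, and ``after passing to the associated lattice'' is not a step but a wish, since the quotient map need not be a retraction when $L$ is not symmetric. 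Likewise, the acyclicity of your relative complex after dividing by $g_3$ (resp.\ $g_4$) is asserted, not derived --- but that assertion \emph{is} the conjecture, so the argument is circular at its crucial point. The filtration by $\mathcal{D}$-class depth and the resulting spectral sequence is a sensible research direction, and the symmetric case is indeed plausibly tractable via the Clifford--McLean retraction, but as submitted the proposal is an annotated restatement of why the conjecture is open rather than a resolution of it.
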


On the~other hand, the~example of a~rectangular skew lattice shows that at least
a~minimum or a~maximum must exists.

\subsection{Mayer-Vietoris sequence for 2-spindles}

Theorem~\ref{thm:spindle-orbit} extends in an~interesting way to multisplindles.
If $X$ is equipped with two operations, it gives an~exact sequence of homology
groups similar to the~Mayer-Vietoris sequence known in algebraic topology.
It follows from the~Lemma below, which is a~generalization
of Lemma~\ref{lem:H-split}.

Let $(X,\star_1,\star_2)$ be a~multisplindle and $x\in X$. Denote by
$\mathcal{O}_i:=X\star_i x$ the~orbit of $x$ with respect to $\star_i$.
Let $C(X,\mathcal{O}_1+\mathcal{O}_2)$ be the~quotient
\begin{equation}
	C(X,\mathcal{O}_1,\mathcal{O}_2):=\frac{C(X)}{C(\mathcal{O}_1)+C(\mathcal{O}_2)}
\end{equation}
and denote its homology by $H(X,\mathcal{O}_1,\mathcal{O}_2)$.
As usual we denote the~scalars involved in construction of $C(X)$ by $a,b,c,d$.

\begin{lemma}\label{lem:H(X,A)-vanishes}
Homology $H(X,\mathcal{O}_1,\mathcal{O}_2)$ is annihilated by $a$.
\end{lemma}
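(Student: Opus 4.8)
The statement asserts that $a$ annihilates $H(X,\mathcal{O}_1,\mathcal{O}_2)$, where $X$ is a $2$-spindle and $\mathcal{O}_i = X\star_i x$. The natural strategy is to mimic the proof of Lemma~\ref{lem:H-split} (and of Theorem~\ref{thm:spindle-orbit}), but now using \emph{two} homotopies, one for each operation. First I would introduce the homotopy $h^x\colon C_n(X)\to C_{n+1}(X)$ given on generators by $h^x(x_0,\dots,x_n) := (-1)^{n+1}(x_0,\dots,x_n,x)$, exactly as in the cited proofs. Computing $\partial h^x + h^x \partial$ on a chain, the terms coming from the last face $d_{n+1}$ cancel against the $(-1)^{n+1}$ sign, and what survives is
\begin{equation*}
(\partial h^x + h^x\partial)(x_0,\dots,x_n) = a\cdot(x_0,\dots,x_n) + b\cdot(x_0\star_1 x,\dots,x_n\star_1 x) + c\cdot(x_0\star_2 x,\dots,x_n\star_2 x),
\end{equation*}
using that $d_i^{\lt}$ contributes the $a$-term, $d_i^{\star_1}$ the $b$-term, and $d_i^{\star_2}$ the $c$-term in the last face (the $d$-term from $\rt$ also just drops $x$, but here I only need the three nontrivial operations contributing to $\partial = a\partial^{\lt}+b\partial^{\star_1}+c\partial^{\star_2}+d\partial^{\rt}$; more carefully, the $\rt$-face $d_{n+1}^{\rt}$ returns $(x_0,\dots,x_n)$ which combines with the $\lt$-contribution). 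I would be careful here to track signs and which scalar multiplies which face, but the upshot is that modulo the subcomplex $C(\mathcal{O}_1)+C(\mathcal{O}_2)$ the chain homotopy identity reads $(\partial h^x + h^x\partial)(w) \equiv a\cdot w$, because $(x_0\star_1 x,\dots,x_n\star_1 x)\in C(\mathcal{O}_1)$ and $(x_0\star_2 x,\dots,x_n\star_2 x)\in C(\mathcal{O}_2)$.

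The key point is that $h^x$ descends to a chain homotopy on the quotient $C(X,\mathcal{O}_1,\mathcal{O}_2) = C(X)/(C(\mathcal{O}_1)+C(\mathcal{O}_2))$: since $X$ is a spindle, $x = x\star_i x \in \mathcal{O}_i$ for both $i$, so if $w$ is supported on $\mathcal{O}_1$ (resp.\ $\mathcal{O}_2$) then $h^x(w)$ is too, hence $h^x$ preserves $C(\mathcal{O}_1)+C(\mathcal{O}_2)$. On the quotient, the identity above becomes
\begin{equation*}
\overline{\partial}\,\overline{h^x} + \overline{h^x}\,\overline{\partial} = a\cdot\id_{C(X,\mathcal{O}_1,\mathcal{O}_2)}.
\end{equation*}
Taking homology, $a\cdot\id$ is null-homotopic, so $a$ annihilates $H(X,\mathcal{O}_1,\mathcal{O}_2)$, which is exactly the claim.

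\textbf{Main obstacle.} The only delicate point is the bookkeeping in the chain-homotopy computation: verifying that $h^x$ is genuinely a chain homotopy requires checking $d_i h^x = h^x d_{i-1}$ for $i \le n$ (which follows because appending $x$ in the last slot commutes with the faces acting on earlier slots, using idempotency $x\star_i x = x$ so the inserted entry is unaffected — this is where the spindle hypothesis is used a second time) and isolating the ``boundary'' terms $i = n+1$ and the new top face that produce precisely $a\cdot w$ plus terms landing in $C(\mathcal{O}_1)+C(\mathcal{O}_2)$. I expect the sign accounting with the $(-1)^{n+1}$ normalization, together with the fact that we work with the \emph{reduced-style} complex (chains are of the form $w = (x_0,\dots,x_n) - (t,\dots,t)$ or, here, simply classes in the quotient), to be the fiddly part; but it is entirely parallel to Lemma~\ref{lem:H-split} and Theorem~\ref{thm:spindle-orbit}, so no genuinely new idea is needed.
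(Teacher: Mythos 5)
Your proof is the paper's proof: the homotopy $h^x$ that appends $x$, pushed down to the quotient $C(X)/(C(\mathcal{O}_1)+C(\mathcal{O}_2))$, exhibits $a\cdot\id$ as null-homotopic. One correction to your bookkeeping, though: the parenthetical claim that the $\rt$-face $d_{n+1}^{\rt}$ ``returns $(x_0,\dots,x_n)$ which combines with the $\lt$-contribution'' is false --- $w\rt x=(x,\dots,x)$, not $w$; if it really merged with the $\lt$-term you could only conclude that $a+d$ (not $a$) annihilates the homology. The correct reason the $d$-term is harmless (and why your displayed formula, which silently drops it, still gives the right answer in the quotient) is that $(x,\dots,x)$ lies in $C(\mathcal{O}_1)$ because $x=x\star_1 x\in\mathcal{O}_1$ --- the same idempotency observation you already invoke to show $h^x$ preserves the subcomplex. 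This is precisely what the paper's proof records: all three of $w\star_1 x$, $w\star_2 x$ and $w\rt x$ lie in $C(\mathcal{O}_1)+C(\mathcal{O}_2)$.
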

\begin{proof}
A~chain homotopy $h^x(w_0,\dots,w_n):=(w_0,\dots,w_n,x)$ from Lemma~\ref{lem:H-split}
induces on $C(X,\mathcal{O}_1,\mathcal{O}_2)$ a~homotopy between $a\cdot\id$ and
a~trivial map, because for any $w\in C(X)$ all elements $w\star_1 x$, $w\star_2 x$
and $w\rt x$ lie in $C(\mathcal{O}_1)+C(\mathcal{O}_2)$.
\end{proof}

\begin{theorem}\label{thm:MVSeq}
Let $(X,\star_1,\star_2)$ be a~multispindle and $x\in X$.
If the~coefficient $a$ is invertible, then there is a~long exact sequence
$$
\dots\to H_{n+1}(X)\to
H_n(\mathcal{O}_1\cap\mathcal{O}_2)\to
H_n(\mathcal{O}_1)\oplus H_n(\mathcal{O}_2)\stackrel{(i_*,-j_*)}\to
H_n(X)\to \dots
$$
where $i_*$ and $j_*$ are induced by inclusions.
\end{theorem}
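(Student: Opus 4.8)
The plan is to build the long exact sequence as the homology sequence of a suitable short exact sequence of chain complexes, with Lemma~\ref{lem:H(X,A)-vanishes} doing the work of collapsing an extra term. Consider the three subcomplexes $C(\mathcal{O}_1)$, $C(\mathcal{O}_2)$ and $C(\mathcal{O}_1\cap\mathcal{O}_2)$ of $C(X)$; since $\mathcal{O}_1$, $\mathcal{O}_2$ and their intersection are all submultispindles (as $x=x\star_i x\in\mathcal{O}_i$ and each orbit is closed under all operations by distributivity), these inclusions are genuine inclusions of subcomplexes. First I would write down the short exact sequence
$$
0\to C(\mathcal{O}_1\cap\mathcal{O}_2)\to C(\mathcal{O}_1)\oplus C(\mathcal{O}_2)\to C(\mathcal{O}_1)+C(\mathcal{O}_2)\to 0,
$$
where the first map is $w\mapsto(w,w)$ and the second is $(u,v)\mapsto u-v$; exactness is the standard Mayer--Vietoris bookkeeping, using that all groups are free abelian so that $C(\mathcal{O}_1)\cap C(\mathcal{O}_2)=C(\mathcal{O}_1\cap\mathcal{O}_2)$ at the chain level (an element supported on tuples from both $\mathcal{O}_1$ and $\mathcal{O}_2$ is supported on tuples from the intersection). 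This already gives a long exact sequence relating $H(\mathcal{O}_1\cap\mathcal{O}_2)$, $H(\mathcal{O}_1)\oplus H(\mathcal{O}_2)$ and $H(C(\mathcal{O}_1)+C(\mathcal{O}_2))$.

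Next I would relate $H(C(\mathcal{O}_1)+C(\mathcal{O}_2))$ to $H(X)$. The inclusion $C(\mathcal{O}_1)+C(\mathcal{O}_2)\hookrightarrow C(X)$ sits in a short exact sequence
$$
0\to C(\mathcal{O}_1)+C(\mathcal{O}_2)\to C(X)\to C(X,\mathcal{O}_1,\mathcal{O}_2)\to 0
$$
by the very definition of $C(X,\mathcal{O}_1,\mathcal{O}_2)$. By Lemma~\ref{lem:H(X,A)-vanishes}, $a$ annihilates $H_n(X,\mathcal{O}_1,\mathcal{O}_2)$ for all $n$; since $a$ is invertible in $R$ this forces $H_n(X,\mathcal{O}_1,\mathcal{O}_2)=0$ for every $n$, so the long exact sequence of this pair shows the inclusion induces an isomorphism $H_n(C(\mathcal{O}_1)+C(\mathcal{O}_2))\cong H_n(X)$ for all $n$. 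Substituting this isomorphism into the Mayer--Vietoris sequence from the previous paragraph yields exactly the claimed sequence, with the map out of $H_n(\mathcal{O}_1)\oplus H_n(\mathcal{O}_2)$ being $(u,v)\mapsto i_*u - j_*v$, i.e. $(i_*,-j_*)$, since under the identification the second map of the first short exact sequence becomes the difference of the two inclusions into $C(X)$.

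The only genuinely substantive point — and the one I would spell out carefully — is the chain-level identity $C(\mathcal{O}_1)\cap C(\mathcal{O}_2)=C(\mathcal{O}_1\cap\mathcal{O}_2)$ inside $C(X)$, which is what makes the Mayer--Vietoris short exact sequence exact on the left; this uses that $C_n(X)=RX^{n+1}$ has the tuples $(x_0,\dots,x_n)$ as a free basis, so intersecting the spans of the sub-bases indexed by $\mathcal{O}_1^{n+1}$ and $\mathcal{O}_2^{n+1}$ gives the span of $(\mathcal{O}_1\cap\mathcal{O}_2)^{n+1}$. Everything else is formal homological algebra: the snake/long-exact-sequence machinery applied twice, plus the observation that a module annihilated by a unit is zero. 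It is worth remarking that $a$ invertible is used only through Lemma~\ref{lem:H(X,A)-vanishes}; the role of the $\rt$ operation (present since $X$ is a multispindle) is implicit in that lemma, where $w\rt x\in C(\mathcal{O}_1)+C(\mathcal{O}_2)$ is needed. I expect no real obstacle — the result is essentially the standard topological Mayer--Vietoris argument transported to this algebraic setting, with Lemma~\ref{lem:H(X,A)-vanishes} replacing the excision-type input.
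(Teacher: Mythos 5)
Your proposal is correct and follows essentially the same two-step route as the paper: the standard Mayer--Vietoris short exact sequence $0\to C(\mathcal{O}_1\cap\mathcal{O}_2)\to C(\mathcal{O}_1)\oplus C(\mathcal{O}_2)\to C(\mathcal{O}_1)+C(\mathcal{O}_2)\to 0$, followed by Lemma~\ref{lem:H(X,A)-vanishes} to identify $H_n(C(\mathcal{O}_1)+C(\mathcal{O}_2))$ with $H_n(X)$ when $a$ is invertible. Your explicit verification of the chain-level identity $C(\mathcal{O}_1)\cap C(\mathcal{O}_2)=C(\mathcal{O}_1\cap\mathcal{O}_2)$ is a detail the paper leaves implicit, but the argument is identical.
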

\begin{proof}
The~proof goes in two steps. First, there is a~standard exact sequence
of complexes
$$
0\to C(\mathcal{O}_1\cap\mathcal{O}_2)\to
C(\mathcal{O}_1)\oplus C(\mathcal{O}_2)\to
C(\mathcal{O}_1)+C(\mathcal{O}_2)\to 0,
$$
which results in a~long exact sequence of homology we want, expect that
every $H_n(X)$ is replaced with $H_n(C(\mathcal{O}_1)+C(\mathcal{O}_2))$.
Next, use the~definition of $H(X,\mathcal{O}_1,\mathcal{O}_2)$
and Lemma~\ref{lem:H(X,A)-vanishes} to show that those homology groups
are equal if $a$ is invertible.
\end{proof}

Assumptions of the~theorem above are stronger than saying that $\gcd(a,b,c)=1$,
because we do not know, whether $b$ and $c$ annihilates 
$H(C(\mathcal{O}_1)+C(\mathcal{O}_2))$ as well. This holds, however,
if each operation has a~unit that is at the~same time a~projector for
the~other operation.

\subsection{Multishelves with absorption}\label{sec:mult-w-abs}

One of the~most important properties of a~lattice is absorption~(\ref{eq:absorption}).
It is used in this paper in many places, so it seems natural to ask
what it implies for multishelves.

\begin{definition}
Say that a~multishelf $(X,\{\star_\lambda\}_{\lambda\in\Lambda})$
\emph{satisfies the~absorption law}, if for any two operations
$\star_\alpha\neq\star_\beta$:
\begin{equation}\label{eq:msh-abs}
(x\star_\alpha y)\star_\beta y = y.
\end{equation}
\end{definition} 

It is a~classical observation \cite{LeechRec} that a~multishelf with absorption
must be a~multispindle. This is an~analogue to the~statement, that idempotency follows
from other axioms of a~lattice. Hence, from the~point of view of distributive
homology, we can add both the~left and the~right trivial operation as it was
in the~case of lattices.

\begin{proposition}
Let $(X,\{\star_\lambda\}_{\lambda\in\Lambda})$ be a~multishelf with absorption.
Then $x\star_\lambda x = x$ for any $\lambda\in\Lambda$.
\end{proposition}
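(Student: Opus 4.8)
The plan is to derive idempotency directly from the absorption law~(\ref{eq:msh-abs}) together with self-distributivity of each $\star_\lambda$, exactly as one does in the classical lattice case. The key point is that a multishelf with absorption has, by definition, at least two distinct operations whenever $|\Lambda|\geq 2$; for a single operation the statement is vacuous in the sense that there is nothing to interact with, so I should first note that the interesting content is when $\Lambda$ has more than one element (and in fact the intended reading is that we work with at least two operations, as the whole section presupposes).

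First I would fix $\lambda = \alpha$ and pick any second operation $\star_\beta \neq \star_\alpha$, which exists by hypothesis. The strategy is the standard two-line computation: apply the absorption law in one direction to rewrite $x$, then expand using distributivity, then collapse using absorption again. Concretely, I would start from the absorption identity $(x\star_\beta y)\star_\alpha y = y$, but the cleaner route is the classical one: write
$$
x\star_\alpha x \;=\; \bigl((x\star_\beta x)\star_\alpha x\bigr)\star_\alpha x,
$$
using absorption~(\ref{eq:msh-abs}) with the roles of $\alpha,\beta$ chosen so that $(x\star_\beta x)\star_\alpha x = x$. Hmm — one must be careful which instance of~(\ref{eq:msh-abs}) applies, since it is stated asymmetrically as $(u\star_\alpha v)\star_\beta v = v$. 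So the correct substitution is: from~(\ref{eq:msh-abs}) with $u = x$, $v = x$ we get $(x\star_\alpha x)\star_\beta x = x$, and symmetrically (swapping the roles of $\alpha$ and $\beta$, still legitimate as both are operations of the multishelf and the law is required for every ordered pair of distinct operations) $(x\star_\beta x)\star_\alpha x = x$. Then
$$
x \;=\; (x\star_\beta x)\star_\alpha x
$$
and I want to feed this back in to isolate $x\star_\alpha x$. Using self-distributivity of $\star_\alpha$ and idempotency-free manipulation, I would compute $x\star_\alpha x = \bigl((x\star_\beta x)\star_\alpha x\bigr)\star_\alpha x = (x\star_\beta x)\star_\alpha(x\star_\alpha x)$ is not quite it either; the genuinely clean derivation is the lattice one: $x = (x\star_\alpha x)\star_\beta x$ reads "$x$ equals the $\star_\beta$-absorption of $x\star_\alpha x$ by $x$", so applying~(\ref{eq:msh-abs}) in the form $(u\star_\alpha v)\star_\beta v = v$ with $u = x$, $v = x$, the left side is literally $(x\star_\alpha x)\star_\beta x = x$; and applying it in the other order gives the companion identity, and then
$$
x\star_\alpha x \;=\; \bigl((x\star_\alpha x)\star_\beta x\bigr)\star_\alpha \bigl((x\star_\alpha x)\star_\beta x\bigr)
$$
after substituting $x = (x\star_\alpha x)\star_\beta x$ on both sides — wait, this is getting circular.

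Let me state the plan more honestly: the cleanest argument sets $y := x\star_\alpha x$ and observes $y\star_\beta x = x$ by~(\ref{eq:msh-abs}). Then I compute $x\star_\alpha x$ using distributivity of $\star_\alpha$ over $\star_\beta$ is not available (we only have right distributivity of everything over everything, which is exactly the multishelf hypothesis, so $(u\star_\beta v)\star_\alpha w = (u\star_\alpha w)\star_\beta(v\star_\alpha w)$ — good, that IS available). So:
$$
x \;=\; y\star_\beta x \;=\; (y\star_\beta x)\star_\alpha x \cdot(\text{needs idempotency-like step})
$$
Rather than fight this, I would instead mimic the proof on p.~117ff of the lattice references: the robust fact is $x\star_\alpha x = ((x\star_\beta x)\star_\alpha x)\star_\alpha x$, expand the outer $\star_\alpha$ by right self-distributivity against... no. The honest summary for the proposal: \textbf{the main obstacle} is bookkeeping the asymmetry of~(\ref{eq:msh-abs}) and identifying which distributivity instances are legitimate; the proof is a finite diagram chase in the free structure on one generator, combining the two directions of absorption with right self-distributivity of $\star_\alpha$ and mutual right distributivity of $\star_\alpha,\star_\beta$, and I would verify it by a short explicit substitution chain of length about four steps, checking at each step that the rewriting rule applied is among the multishelf-with-absorption axioms. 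The result then holds for every $\lambda$ since $\alpha$ was arbitrary, completing the proof.
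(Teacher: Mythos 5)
Your write-up correctly isolates the two ingredients that matter --- the instances $(y\star_\alpha y)\star_\beta y=y$ and $(y\star_\beta y)\star_\alpha y=y$ of the absorption law~(\ref{eq:msh-abs}), obtained by setting $x=y$ for the two orderings of a pair of distinct operations --- but it never assembles them into a proof: each attempted chain is either abandoned (``this is getting circular'') or deferred to an unexecuted ``substitution chain of length about four steps.'' As it stands you have a plan, not an argument, so this is a genuine gap. The plan also points in the wrong direction: you keep reaching for self-distributivity and mutual right distributivity, but distributivity plays no role here at all. The paper closes the argument in one line using absorption alone. Putting $x=y$ in the instance $(x\star_\beta y)\star_\alpha y=y$ gives $(y\star_\beta y)\star_\alpha y=y$; applying $-\star_\beta y$ to both sides yields
$$
y\star_\beta y=\bigl((y\star_\beta y)\star_\alpha y\bigr)\star_\beta y,
$$
and the right-hand side equals $y$ by the other instance of~(\ref{eq:msh-abs}), namely the one with $x=y\star_\beta y$. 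Hence $y\star_\beta y=y$, and since $\beta$ was arbitrary, every operation is idempotent.

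Your side remark that the statement has content only when $|\Lambda|\geqslant 2$ is correct and worth keeping --- for a single operation the absorption hypothesis is vacuous and idempotency does not follow --- but it does not substitute for the missing computation above.
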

\begin{proof}
Putting $x=y$ or $x=y\star_\beta y$ in (\ref{eq:msh-abs}) we obtain
$y\star_\beta y = ((y\star_\beta y)\star_\alpha y)\star_\beta y = y$.
\end{proof}

If a~multishelf with absorption consists only of idempotent elements of $Bin(X)$,
that is $(x\star_\lambda y)\star_\lambda y = x\star_\lambda y$ for every $\lambda\in\Lambda$,
then we call $X$ a~\emph{generalized distributive lattice}. Moreover, if we relax
the~conditions and do not assume the~operations are mutually distributive,
then $X$ is called a~\emph{generalized lattice}.

To have analogous results to Proposition~\ref{prop:ac-ab-ann-H} we need elements
that behaves likes $\top$ and $\bot$. Because these are neutral elements for
$\land$ and $\lor$ respectively, we introduce the~following definition.

\begin{definition}
A~multishelf $(X,\{\star_\lambda\}_{\lambda\in\Lambda})$ is \emph{unital},
if every $\star_\lambda$ has a~right unit $u_\lambda$.
\end{definition}

The~smallest nontrivial example is given by a~set $B_1^k=\{u_1,\dots,u_k\}$
with operations $\star_1,\dots,\star_k$ defined as
$u_i\star_s u_s = u_i$ and $u_i\star_s u_j = u_j$ for $j\neq s$.
It is the~smallest generalized unital distributive lattice consisting of
$k$ operations and at least two elements.

It is worth to notice, that in a~unital multishelf with absorption units
$u_\lambda$ are automatically projectors for other operations:
\begin{equation}
x\star_\beta u_\alpha = (x\star_\alpha u_\alpha)\star_\beta u_\alpha = u_\alpha.
\end{equation}
As a~consequence, for a~unital multishelf with absorption we do not have to assume
$a$ to be invertible in Theorem~\ref{thm:MVSeq}.
Furthermore, an~intersection of orbits has only one element.
Indeed, if $x\in \orb_i\cap \orb_j$ for $i\neq j$, then $x = y\star_i t$
and $x=y'\star_j t$ for some $y$ and $y'$, what implies
\begin{equation}
x = (y'\star_j t)=(y'\star_j t)\star_j t = x\star_j t = (y\star_i t)\star_j t = t.
\end{equation}
As a~result, the~sequence from Theorem~\ref{thm:MVSeq} shows that
reduced homology splits as in Lemma~\ref{lem:H-split}.
This generalizes in an~obvious way to multispindles with more operations
satisfying the~absorption law.

The~remark above shows that as long as we see an~element $x\in X$ with all
orbits strictly smaller that $X$, we can reduce computation of homology to
smaller multispindles. If we cannot, that is for every element $x\in X$ at least
one of the~orbits $X\star_i x$ equals the~whole set $X$, we call the~multishelf
$X$ \emph{irreducible}.
In case of distributive lattices the~only the~Boolean algebra $B_1$ is irreducible.
For multishelves with absorption there are more cases and
all are characterized by a~simple condition.

\begin{proposition}
Let $(X,\{\star_1,\dots,\star_k\})$ be a~multishelf with absorption.
Then $X$ is irreducible if and only if every
element $x\in X$ is a~unit for some operation $\star_i$.
\end{proposition}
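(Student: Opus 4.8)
The plan is to treat the two implications separately; one is immediate and the other carries all the content. Throughout I write $\rho:=\star_i^x$ for the inner endomorphism $\rho(y)=y\star_i x$, so that the orbit $X\star_i x$ is precisely $\im\rho$, and recall that $\rho$ is a multispindle endomorphism and that every element is idempotent, so $\rho(x)=x\star_i x=x$.

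For the easy implication, suppose every $x\in X$ is a right unit for some $\star_i$, that is $y\star_i x=y$ for all $y$. Then $X\star_i x=X$, so no element has all of its orbits properly contained in $X$, and by definition $X$ is irreducible. Conversely, assume $X$ is irreducible and fix $x\in X$. By irreducibility there is an index $i$ with $X\star_i x=X$, i.e. $\rho$ is surjective. The first step is to extract the structural consequence of surjectivity through absorption: every $w\in X$ has the form $w=y\star_i x$, whence for each $\beta\neq i$ the absorption law (\ref{eq:msh-abs}) gives $w\star_\beta x=(y\star_i x)\star_\beta x=x$. Thus $x$ is a right projector for every operation other than $\star_i$. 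In particular, once $|X|>1$ the element $x$ cannot be a right unit for any $\star_\beta$ with $\beta\neq i$, so $\star_i$ is the only possible operation for which $x$ is a unit, and it suffices to prove that $\rho=\id$.

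The reduction of ``$\rho$ surjective'' to ``$\rho=\id$'' rests on a single fact: that $\rho$ is idempotent as a self-map, $(y\star_i x)\star_i x=y\star_i x$ for all $y$, which is exactly the statement that $\star_i$ is an idempotent element of $Bin(X)$. Granting this, a surjective idempotent self-map is automatically the identity: for arbitrary $y$ choose $z$ with $y=\rho(z)$, and then $\rho(y)=\rho^2(z)=\rho(z)=y$. Hence $y\star_i x=y$ for all $y$, so $x$ is a right unit for $\star_i$, which completes the converse and hence the proposition.

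I expect the identity $\rho^2=\rho$ to be the main obstacle, since it is the one genuinely nontrivial ingredient; everything else is bookkeeping with absorption and the projector property derived above. I would isolate it as a short lemma stating that each operation is an idempotent of $Bin(X)$ for the (generalized distributive) lattices under consideration here, and derive $(y\star_i x)\star_i x=y\star_i x$ from the absorption law together with the mutual right distributivity of the operations. This is precisely the point where the lattice structure, rather than mere self-distributivity, is used: without idempotency of $\rho$ the orbit map could in principle be surjective without being the identity, so pinning down $\rho^2=\rho$ is the crux of the argument.
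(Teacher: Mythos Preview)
Your approach is essentially identical to the paper's. Both proofs treat the ``if'' direction as immediate, and for the ``only if'' direction both pick $u\in X$, use irreducibility to obtain surjectivity of $\rho(y)=y\star_i u$, and then invoke the identity $(y\star_i u)\star_i u = y\star_i u$ (idempotency of $\star_i$ in $Bin(X)$) to upgrade surjectivity to $\rho=\id$.

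The one substantive difference is that you explicitly flag $\rho^2=\rho$ as the nontrivial ingredient requiring its own argument, whereas the paper simply writes ``since $\star_i$ is an idempotent in $Bin(X)$'' and moves on. You are right to be cautious here: idempotency in $Bin(X)$ is \emph{not} part of the definition of a multishelf with absorption---it is precisely the extra axiom that distinguishes a ``generalized distributive lattice'' in the paper's terminology. So the paper's proof, read literally against the stated hypothesis, has the same gap you identify; the subsequent sentence ``In particular, every irreducible multishelf with absorption is a generalized lattice'' is then a genuine consequence only once this step is filled in. Your intermediate observation---that surjectivity of $X\star_i x$ forces $x$ to be a projector for every $\star_\beta$ with $\beta\neq i$---is correct and pleasant, though the paper does not use it and it is not needed for the conclusion.

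In short: same proof, and your instinct that $\rho^2=\rho$ is where the real work hides is exactly on target.
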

\begin{proof}
The ``if'' part is easy. For the ``only if'' pick any $u\in X$
and notice that if $X$ is irreducible, then for some operation
$\star_i$ the~map $x \mapsto x\star_i u$ must be a~bijection.
In particular, $x = y\star_i u$ for some $y$.
But since $\star_i$ is an~idempotent in $Bin(X)$, we have also
$$
x\star_i u = (y\star_i u)\star_i u = y\star_i u = x,
$$
what proves that $u$ is a~unit for $\star_i$.
\end{proof}

In particular, every irreducible multishelf with absorption is a~generalized lattice.
Denote by $I^{(r_1,\dots,r_k)}$ the~irreducible generalized lattice
with exactly $r_i$ units for the~$i^{\mathrm{th}}$ operation.
Its homology groups can be computed in the~same
way as we did for $B_1$. Indeed, when restricted to $\CF$, the~differential
is divisible by $g=\gcd\{a_0+a_i\ |\ \star_i\textrm{ has a unit}\}$
and existence of units guarantees that each such $a_0+a_i$ annihilates homology.
Hence, only copies of $\Z_g$ appears in $H\CF$.
Now we can use~Proposition~\ref{prop:F0-hom} to compute the~whole groups.

As a~result, we obtain an~efficient algorithm to compute homology groups
for any finite unital multishelf with absorption.

\begin{algorithm}\label{algorithm}
Let $X$ be a~finite multishelf with absorption. If $X$ has units,
reduced homology $H(\CF(X,x_0))$ can be computed by the~following steps.
\begin{enumerate}[label={\normalfont\arabic{*}. }]
	\item Search for $t\in X$ with all orbits $\orb_i$ being proper subsets of $X$.
	\item If such $t$ does not exist, $X$ is irreducible and we already know its homology.
	\item If such $t$ exists, compute homology groups for each orbit $\orb_i$
		and glue them together as in Lemma~\ref{lem:H-split}.
\end{enumerate}
\end{algorithm}

\begin{table}[bt]
\begin{tabular}{l|ccc}
\hline
\hline
multishelf type  & $\quad$lattice$\quad$ & skew lattice & gen. lattice \\
\hline
All monoids  &  3  & 56 (15)           & 191 (74)\phantom{1}  \\
2U + 2P      &  3  &  4 \phantom{(0)}  &  31 \phantom{(01)}  \\
1U + 2P      &  0  & 16 \phantom{(01)} &  60 (37)  \\
1U + 1P      &  0  &  7 \phantom{(0)}  &  46 (1)\phantom{0}  \\
0U + 2P      &  0  & 25 (14)           &  43 (32)  \\
0U + 1P      &  0  &  4 (1)            &  10 (3)\phantom{1}  \\
0U + 0P      &  0  &  0 \phantom{(0)}  &   1 (1)  \\
\hline
\end{tabular}

\vskip 12pt
\caption{
	Multishelves with absorption $(X,\star_1,\star_2)$ with $3\leqslant |X| \leqslant 4$
	up to duality. The notation ``1U+2P'' indicates multishelves where only one operation
	has a~unit, but both have projectors. Similarly the others. Numbers in brackets,
	if present, count monoids for which Algorithm~\ref{algorithm} fails.
}\label{tbl:4-monoids-stat}
\end{table}

\noindent What remains to show is that all the~orbits, if reducible, have units.
However, the~first guess that $u_s\star_i t\in \orb_i$ is a~unit for $\star_s$,
where $u_s$ is a~unit for $\star_s$ in $X$, is correct:
\begin{equation}
	(x\star_i t)\star_s (u_s\star_i t) = (x\star_s u_s)\star_i t = x\star_i t.
\end{equation}
Therefore, we can continue the~reduction of orbits $\orb_i$ until we end
with irreducibles, what proves correctness of the~algorithm.

The~authors computed homology for all generalized distributive
lattices with two operations on a~set with up to four elements
(see tab.~\ref{tbl:4-monoids-stat}). It appeared, that
Algorithm~\ref{algorithm} still might be correct for skew lattices
with at least one operation being unital, but nothing more.

In general, the~homology is much richer than the~one predicted
by Algorithm~\ref{algorithm}. Below we present $H(\CF(X,t))$ for 
scalars $(4,5,2,0)$ for two such examples $X$.
In addition to the~torsion already known, we also observe
$\Z_a, \Z_{a+b}$ and $\Z_{\gcd(a,c)}$.

\vskip 12pt
\begin{center}
\begin{tabular}{|cc|l|l|}
\hline
\multicolumn{2}{|c|}{\textbf{Generalized}} &
\multicolumn{2}{c|}{\textbf{Homology}} \\
\cline{3-4}
\multicolumn{2}{|c|}{\textbf{distributive lattice}} &
\multicolumn{1}{c}{predicted} & \multicolumn{1}{|c|}{actual} \\
\hline
\multirow{4}{*}{\scriptsize\begin{tabular}{c|cccc}
$\star_1$ & 1 & 2 & 3 & 4 \\
\hline
1 & 1 & 2 & 1 & 2 \\
2 & 1 & 2 & 1 & 2 \\
3 & 1 & 2 & 3 & 4 \\
4 & 1 & 2 & 3 & 4
\end{tabular}}
&
\multirow{4}{*}{\scriptsize\begin{tabular}{c|cccc}
$\star_2$ & 1 & 2 & 3 & 4 \\
\hline
1 & 1 & 2 & 3 & 4 \\
2 & 2 & 2 & 3 & 4 \\
3 & 1 & 2 & 3 & 4 \\
4 & 2 & 2 & 3 & 4
\end{tabular}}

 &$H_0=0\phantom{\Big|}$ & $H_0 = \Z_2$                     \\
&&$H_1=0$                & $H_1 = \Z_2^4\oplus\Z_4$         \\
&&$H_2=0\phantom{\Big|}$ & $H_2 = \Z_2^{10}\oplus\Z_4^3$    \\
&&$H_3=0$                & $H_3 = \Z_2^{38}\oplus\Z_4^{13}$ \\

\hline

\multirow{4}{*}{\scriptsize\begin{tabular}{c|cccc}
$\star_1$ & 1 & 2 & 3 & 4 \\
\hline
1 & 1 & 2 & 1 & 2 \\
2 & 1 & 2 & 1 & 2 \\
3 & 3 & 4 & 3 & 4 \\
4 & 3 & 4 & 3 & 4
\end{tabular}}
&
\multirow{4}{*}{\scriptsize\begin{tabular}{c|cccc}
$\star_2$ & 1 & 2 & 3 & 4 \\
\hline
1 & 1 & 1 & 3 & 3 \\
2 & 2 & 2 & 4 & 4 \\
3 & 1 & 1 & 3 & 3 \\
4 & 2 & 2 & 4 & 4
\end{tabular}}
 &$H_0=\Z_3^2\phantom{\Big|}$ & $H_0 = \Z_2\oplus\Z_3\oplus\Z_9$                         \\
&&$H_1=\Z_3^2$                & $H_1 = \Z_2^2\oplus\Z_3\oplus\Z_4\oplus\Z_9$             \\
&&$H_2=\Z_3^6\phantom{\Big|}$ & $H_2 = \Z_2^{10}\oplus\Z_3^3\oplus\Z_4^3\oplus\Z_9^3$    \\
&&$H_3=\Z_3^{10}$             & $H_3 = \Z_2^{38}\oplus\Z_3^5\oplus\Z_4^{13}\oplus\Z_9^5$ \\
\hline
\end{tabular}
\end{center}

\ifnum\value{countcomments} > 0
\message{LaTeX Warning: \thecountcomments\ comments remained to check!}
\fi


\begin{thebibliography}{XXXX}

\bibitem[BD]{B-D}
	R.~Balbes, P.~Dwinger,
	\emph{Distributive Lattices},
	University of Missouri Press, 1974. 

\bibitem[Car]{Car}
	J.~S.~Carter,
	\emph{A Survey of Quandle Ideas}, the chapter in the book
	\emph{Introductory Lectures on Knot Theory: Selected Lectures presented
			at the Advanced School and Conference on Knot Theory and its Applications
			to Physics and Biology, ICTP, Trieste, Italy, 11 - 29 May 2009},
	World Scientific, Series on Knots and Everything, \textbf{46}, 2011.\\
	e-print: \href{http://front.math.ucdavis.edu/1002.4429}{arXiv:1002.4429v2}

\bibitem[CJKLS]{CJKLS}
	S.~Carter, D.~Jelsovsky, S.~Kamada, L.~Langford, M.~Saito,
	\emph{State-sum invariants of knotted curves and surfaces from quandle cohomology},
	Electron. Res. Announc. Amer. Math. Soc. \textbf{5}, 1999, 146-156.

\bibitem[CJKS]{CJKS}
	S.~Carter, D.~Jelsovsky, S.~Kamada, M.~Saito,
	\emph{Quandle homology groups, their Betti numbers, and virtual knots},
	J.~Pure Appl. Algebra \textbf{157}, 2001, 135-155.\\
	e-print: \href{http://front.math.ucdavis.edu/math.GT/9909161}{arXiv:math/9909161v1}

\bibitem[CKS]{CKS}
	S.~Carter, S.~Kamada, M.~Saito,
	\emph{Surfaces in 4-space},
	Encyclopaedia of Mathematical Sciences,
	Low-Dimensional Topology III, R.V.Gamkrelidze, V.A.Vassiliev, Eds., 213pp.

\bibitem[Cla]{Cla}
	F.~J.~B.~J.~Clauwens,
	\emph{The algebra of rack and quandle cohomology},
	J.~Knot Theory and its Ramifications, to appear.\\
	e-print: \href{http://front.math.ucdavis.edu/1004.4423}{arXiv:1004.4423v1}

\bibitem[Cr]{Cr}
	A.~S.~Crans,
	\emph{Lie 2-algebras}, Ph.D. Dissertation, 2004, UC Riverside.\\
	e-print: \href{http://front.math.ucdavis.edu/math.QA/0409602}{arXiv:math.QA/0409602}

\bibitem[EG]{E-G}
	P.~Etingof, M.~Grana,
	\emph{On rack cohomology},
	J.~Pure Appl. Algebra \textbf{177}, 2003, 49-59.\\
	e-print: \href{http://front.math.ucdavis.edu/math.QA/0201290}{arXiv:math/0201290v2}

\bibitem[F-1]{Fra}
	A.~Frabetti,
	\emph{Dialgebra (co)homology with coefficients},
	in \emph{J.-L.Loday, A.Frabetti,F.Chapoton, F.Goichot, Dialgebras and Related Operads},
	Lectures Notes in Mathematics, 1763, Springer-Verlag, 2001, pp. 67-103.

\bibitem[F-2]{Fra1}
	A.~Frabetti,
	\emph{Dialgebra homology of associative algebras},
	C.R. Acad. Sci. Paris, 325, 1997, 135-140.

\bibitem[Fe]{Fenn}
	R.~Fenn,
	\emph{Tackling the Trefoils}, preprint 2011. \\
	e-print: \href{http://front.math.ucdavis.edu/1110.0582}{arXiv:1110.0582v1}

\bibitem[FR]{F-R}
	R.~Fenn, C.~Rourke,
	\emph{Racks and links in codimension two},
	J.~Knot Theory and its Ramifications \textbf{1}(4), 1992, 343-406.\\
	e-print: \url{http://www.maths.sussex.ac.uk/Staff/RAF/Maths/racks.ps}

\bibitem[FRS]{FennRourkeSand}
	R.~Fenn, C.~Rourke, B.~J.~Sanderson,
	\emph{James bundles and applications},
	Proc. London Math. Soc. (3) 89, no. 1, 2004, 217-240.\\
	preprint 1995: \url{http://www.maths.warwick.ac.uk/\~cpr/ftp/james.ps}

\bibitem[Gra]{Gratzer}
	G.~Gr\"{a}tzer,
	\emph{Lattice Theory. First concepts and distributive lattices},
	W.H.Freeman, 1971 (Dover edition 2009)

\bibitem[Gr]{Gr}
	M.~Greene, 
	\emph{Some results in geometric topology and geometry},
	Ph.D. thesis, University of Warwick, advisor: Brian Sanderson, 1997.

\bibitem [Ho]{Hoch}
	G.~Hochschild,
	\emph{On the cohomology groups of an associative algebra},
	Annals of Math., \textbf{46}, 1945, 58-67.

\bibitem[In]{In}
	K.~N.~Inasaridze,
	\emph{Homotopy of pseudo-simplicial groups, nonabelian derived functors and algebraic K-theory},
	Math. Sbornik, T.98 (140), n.3(11), 1975, 339-362.

\bibitem[Joy]{Joy}
	D.~Joyce,
	\emph{A classifying invariant of knots: the knot quandle},
	J.~Pure Appl. Algebra \textbf{23}, 1982, 37-65.

\bibitem[L-1]{LeechNormal}
	J.~E.~Leech,
	\emph{Normal skew lattices},
	Semigruop Forum \textbf{44} (1992), 1-8.

\bibitem[L-2]{LeechRec}
	J.~E.~Leech,
	\emph{Recent developments in the~theory of skew lattices},
	Semigruop Forum \textbf{52} (1996), 7-24.

\bibitem[L-3]{Leech}
	J.~E.~Leech,
	\emph{Skew lattices in rings},
	A.~Universalis \textbf{26} (1989), 48-72.

\bibitem[LN]{L-N}
	R.~A.~Litherland, S.~Nelson,
	\emph{The Betti numbers of some finite racks},
	J.~Pure Appl. Algebra \textbf{178}, 2003, 187-202.\\
	e-print: \href{http://front.math.ucdavis.edu/math.GT/0106165}{arXiv:math/0106165v4}

\bibitem [Lod]{Lod}
	J.-L.~Loday,
	\emph{Cyclic Homology},
	Grund. Math. Wissen. Band 301, Springer-Verlag, Berlin, 1992 (second edition, 1998).

\bibitem [NP-1]{N-P-1}
	M.~Niebrzydowski, J.~H.~Przytycki,
	\emph{Burnside Kei},
	Fundamenta Mathematicae \textbf{190}, June, 2006, 211--229.\\
	e-print: \href{http://front.math.ucdavis.edu/math.GT/0601004}{arXiv:math/0601004v1}

\bibitem [NP-2]{N-P-2}
	M.~Niebrzydowski, J.~H.~Przytycki,
	\emph{Homology of dihedral quandles},
	J.~Pure Appl. Algebra, \textbf{213}, 2009, 742-755.\\
	e-print: \href{http://front.math.ucdavis.edu/math.GT/0611803}{arXiv:math/0611803v1}

\bibitem [NP-3]{N-P-3}
	M.~Niebrzydowski, J.~H.~Przytycki,
	\emph{The Quandle of the Trefoil as the Dehn Quandle of the Torus},
	Osaka Journal of Mathematics, 46 (3), 2009, 645-659.\\
	e-print: \href{http://front.math.ucdavis.edu/0805.2743}{arXiv:0805.2743v1}

\bibitem [NP-4]{NP-HomOper}
	M.~Niebrzydowski, J.~H.~Przytycki,
	\emph{Homology operations on homology of quandles},
	J.~Algebra, 324, 2010, pp. 1529-1548.\\
	e-print: \href{http://front.math.ucdavis.edu/0907.4732}{arXiv:0907.4732v2}

\bibitem [NP-5]{N-P-5}
	M.~Niebrzydowski, J.~H.~Przytycki, 
	\emph{The second quandle homology of the Takasaki quandle of an odd
	abelian group is an exterior square of the group},
	J.~Knot Theory and Its Ramifications, \textbf{20}(1), January, 2011, 171-177.\\
	e-print: \href{http://front.math.ucdavis.edu/1006.0258}{arXiv:1006.0258v1}

\bibitem [Nos]{Nos}
	T.~Nosaka,
	\emph{On quandle homology groups of Alexander quandles of prime order},
	TAMS, submitted.

\bibitem[Oht]{Oht}
	T.~Ohtsuki,
	\emph{Quandles, in Problems on invariants of knots and 3-manifolds},
	Geometry and Topology Monographs, \textbf{4}, 2003, 455-465.\\
	e-print: \href{http://front.math.ucdavis.edu/math.GT/0406190}{arXiv:math/0406190v1}

\bibitem[Pei]{Pei}
	C.~S.~Peirce,
	\emph{On the algebra of logic},
	American J.~of Math., \textbf{3}(1), 1880, 15-57.

\bibitem[Prz]{Prz-Demonstratio}
	J.~H.~Przytycki,
	\emph{Distributivity versus associativity in the homology theory 
	of algebraic structures},
	Demonstratio Math., \textbf{44}(4), December 2011, 821-867, to appear. \\
	e-print: \href{http://front.math.ucdavis.edu/1109.4850}{arXiv:1109.4850v1}

\bibitem[PS]{P-S}
	J.~H.~Przytycki, A.~S.~Sikora, 
	\emph{Distributive Products and Their Homology}, preprint, 2011.\\
	e-print: \href{http://front.math.ucdavis.edu/1105.3700}{arXiv:1105.3700v1}

\bibitem[Si]{Sikorski}
	R.~Sikorski,
	\emph{Boolean algebras},
	Springer-Verlag, 1960 (second edition 1964)

\bibitem[Tak]{Tak}
	M.~Takasaki,
	\emph{Abstraction of symmetric transformation}, (in Japanese)
	Tohoku Math. J. \textbf{49}, 1942/3, 145-207; the English translation
	is being prepared by S.~Kamada.

\bibitem[T-V]{Ti-Vo}
	M.~Tierney, W.~Vogel,
	\emph{Simplicial derived functors in ``Category theory, homology theory and applications''},
	Springer L.N.M. 68, 1969, 167-179.

\bibitem[Tra]{Traczyk}
	T.~Traczyk,
	\emph{Wst{\c e}p do teorii algebr Boole'a},
	Biblioteka Matematyczna, Tom 37, PWN, Warszawa 1970.

\end{thebibliography}
\end{document}